\newtheorem{theorem}{Theorem}[section]
\newtheorem{lemma}[theorem]{Lemma}
\newtheorem{proposition}[theorem]{Proposition}
\newtheorem{corollary}[theorem]{Corollary}
\theoremstyle{definition}
\newtheorem{definition}[theorem]{Definition}
\newtheorem{example}[theorem]{Example}
\newtheorem{remark}[theorem]{Remark}
\newcommand{\change}[1]{\null}
\newcommand{\changemore}[1]{\null}
\newcommand{\hooklongrightarrow}{\lhook\joinrel\longrightarrow}
\newcommand{\R}{\mathbb{R}}
\newcommand{\Rbar}{\overline{\mathbb{R}}}
\newcommand{\Z}{\mathbb{Z}}
\newcommand{\N}{\mathbb{N}}
\newcommand{\PP}{\mathbb{P}}
\newcommand{\A}{\mathbb{A}}
\newcommand{\GG}{\mathbb{G}}
\newcommand{\Sigmabar}{\overline{\Sigma}}
\newcommand{\sigmabar}{\overline{\sigma}}
\newcommand{\Mbar}{\overline{M}}
\newcommand{\Ubar}{\overline{U}}
\newcommand{\calCbar}{\overline{\mathcal{C}}}
\newcommand{\calC}{\mathcal{C}}
\newcommand{\calM}{\mathcal{M}}
\newcommand{\calO}{\mathcal{O}}
\newcommand{\calMbar}{\overline{\mathcal{M}}}
\newcommand{\frakp}{\mathfrak{p}}
\renewcommand{\k}{k}
\DeclareMathOperator{\Spec}{Spec}
\DeclareMathOperator{\Hom}{Hom}
\DeclareMathOperator{\trop}{trop}
\DeclareMathOperator{\id}{id}
\DeclareMathOperator{\Star}{Star}
\newcommand{\pmon}[1]{#1^{\infty}}
\newcommand{\econe}[1]{\overline{#1}}
\title[Clutching and gluing in tropical and logarithmic geometry]{Clutching and gluing in tropical and logarithmic geometry}
\author[Huszar]{Alana Huszar}
\author[Marcus]{Steffen Marcus}
\author[Ulirsch]{Martin Ulirsch}
\date{\today}
\address[Huszar]{Department of Mathematics\\
University of Michigan\\
530 Church Street\\
Ann Arbor, MI 48109\\
U.S.A.}
\email{huszara@umich.edu}
\address[Marcus]{Mathematics and Statistics\\
The College of New Jersey\\
Ewing, NJ 08628\\
U.S.A.}
\email{marcuss@tcnj.edu}
\address[Ulirsch]{Department of Mathematics\\ 
University of Michigan\\
530 Church Street\\
Ann Arbor, MI 48109\\
U.S.A.}
\email{ulirsch@umich.edu}
\subjclass[2010]{14T05; 20M14; 14A20}
\begin{document}

\begin{abstract} 
The classical clutching and gluing maps between the moduli stacks of stable marked algebraic curves are not logarithmic, i.e. they do not induce morphisms over the category of logarithmic schemes, since they factor through the boundary. Using insight from tropical geometry, we enrich the category of logarithmic schemes to include so-called sub-logarithmic morphisms and show that the clutching and gluing maps are naturally sub-logarithmic. Building on the recent framework developed by Cavalieri, Chan, Wise, and the third author, we further develop a stack-theoretic counterpart of these maps in the tropical world and show that the resulting maps naturally commute with the process of tropicalization.
\end{abstract}

\maketitle

\setcounter{tocdepth}{1}


\section{Introduction}

\subsection{Main Results} In \cite{Knudsen_projectivityII}, building on the classical work of Deligne and Mumford \cite{DeligneMumford_moduliofcurves}, Knudsen has introduced the moduli stacks $\calMbar_{g,n}$ of stable algebraic curves of genus $g$ with $n$ marked points as well as a natural system of maps between these moduli spaces:
\begin{itemize}
\item the \emph{forgetful maps} $\calMbar_{g,n+1}\rightarrow\calMbar_{g,n}$ given by forgetting the $(n+1)$-st marked point (and possibly stabilizing the resulting $n$-marked curve);
\item the \emph{clutching maps} $\calMbar_{g_1,n_1+1}\times\calMbar_{g_2,n_2+1}\rightarrow \calMbar_{g_1+g_2,n_1+n_2}$ given by identifying the $(n_1+1)$-st marked point of the first curve with the $(n_2+1)$-st point of the second curve in a node; and
\item the \emph{(self-)gluing maps} $\calMbar_{g-1,n+2}\rightarrow \calMbar_{g,n}$ given by identifying the last two marked points of an $(n+2)$-marked curve in a node. 
\end{itemize}

The clutching and gluing maps factor through the boundary of $\calMbar_{g,n}$ and therefore they cannot induce morphisms between the moduli stacks $\calM_{g,n}^{log}$ of stable logarithmic curves (of genus $g$ with $n$ marked points) in the sense of \cite{Kato_logsmoothcurves}. The reason is that the monoidal coordinates that are normal to the boundary of $\calM_{g,n}^{log}$ would have to be sent to an \emph{absorbing element} $\infty$, where $\infty+p=\infty$, in the logarithmic structure of $\calM_{g,n}^{log}$ which is not allowed in the usual framework of logarithmic geometry (as developed in \cite{Kato_logstr}). Some initial examples describing this phenomenon are provided in Section~\ref{intro_examples} below.

In this note, we use insights coming from tropical geometry (see \cite[Section 8]{ACP}) to enrich the usual logarithmic structures in the sense of \cite{Kato_logstr} to so-called \emph{pointed logarithmic structures} that allow an absorbing element in the sheaf of monoidal coordinates. Using this language we may define a natural stack $\calMbar_{g,n}^{log}$ of stable logarithmic curves over the category of pointed logarithmic schemes that generalizes $\calM_{g,n}^{log}$ and that is represented by the algebraic moduli stack $\calMbar_{g,n}$ with the pointed logarithmic structure coming from its boundary divisor. The stacks $\calMbar_{g,n}^{log}$ admit natural clutching and gluing maps in the category of pointed logarithmic algebraic stacks, an important instance of what we call \emph{sub-logarithmic morphisms} (see Section~\ref{section:pointification}), whose underlying algebraic morphisms are the classical clutching and gluing maps defined in \cite{Knudsen_projectivityII}. 

Let us now give a quick outline of the contents of this article: In Section \ref{section_affine} we prove an equivalence between the category of pointed toric monoids and the category of extended rational polyhedral cones, thereby providing a dictionary between the languages used in logarithmic geometry and tropical geometry respectively.  
In Section \ref{section_pointedlogstr} we introduce pointed logarithmic structures and develop their basic geometric  properties, expanding on \cite{Kato_logstr}. In Section \ref{section_logarithmicclutching&gluing} we introduce the moduli stacks $\calMbar_{g,n}^{log}$ as well as their natural clutching and gluing maps. In Section \ref{section_tropicalclutching&gluing} we finally construct a generalization of the moduli stack $\calM_{g,n}^{trop}$ of tropical curves, as introduced in \cite{CavalieriChanUlirschWise_tropstack}, to a stack over the category of extended rational polyhedral cones as well as tropical clutching and gluing maps in this framework. Our main results, discussed in Section~\ref{section_logarithmicclutching&gluing} and  Section~\ref{section_tropicalclutching&gluing} can be summarized as follows (see Theorem~\ref{thm_logclutch&glue} and Theorem~\ref{thm_tropclutch&glue} for a formal statement).

\begin{theorem}\label{thm_main}
The classical clutching and gluing maps between products of moduli spaces of stable curves induce sub-logarithmic morphisms and naturally commute with the process of tropicalization. 
\end{theorem}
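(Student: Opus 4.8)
The statement packages Theorem~\ref{thm_logclutch&glue} and Theorem~\ref{thm_tropclutch&glue}, so the plan is to establish each assertion by reducing, via the dictionary of Section~\ref{section_affine}, to an explicit combinatorial computation on dual graphs and extended cones. The guiding principle is that near a boundary stratum of $\calMbar_{g,n}$ indexed by a stable dual graph $G$, the deformation theory of Kato log curves \cite{Kato_logsmoothcurves} identifies the stalk of the characteristic sheaf with the edge monoid $\N^{E(G)}$, and---after pointification (Section~\ref{section_pointedlogstr})---its dual extended cone with $\econe{\sigma}_G=\Rbar_{\geq0}^{E(G)}$, the space of (possibly infinite) edge lengths. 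Both the clutching and the gluing maps act on dual graphs by \emph{adding an edge}: clutching joins the $(n_1{+}1)$-st leg of $G_1$ to the $(n_2{+}1)$-st leg of $G_2$ by a bridge $e$, while self-gluing attaches a loop $e$. The entire argument is organized around tracking this single new edge through the log and tropical structures.

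For the sub-logarithmic assertion I would first work \'etale-locally on charts and describe the pullback of characteristics. Since the image of the clutching map lies entirely in the boundary divisor cut out by the smoothing parameter $s_e$ of the new node, that parameter vanishes identically on the image; in the pointed characteristic monoid $\N^{E(G)}$ this forces the generator indexed by $e$ to be sent to the absorbing element $\infty$, whereas the generators indexed by $E(G_1)\sqcup E(G_2)$ map isomorphically onto the characteristics of the source. This is precisely a morphism of pointed toric monoids in the sense of Section~\ref{section_affine}, and I would verify that these local models are compatible with the transition data, hence glue to a sub-logarithmic morphism in the sense of Section~\ref{section_pointedlogstr} whose underlying algebraic map is the classical clutching map of \cite{Knudsen_projectivityII}. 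The self-gluing case is identical once one accounts for the $\Z/2$ that exchanges the two branches of the loop.

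For the tropical assertion I would recall from Section~\ref{section_tropicalclutching&gluing} the generalization of $\calM_{g,n}^{trop}$ of \cite{CavalieriChanUlirschWise_tropstack} to a stack over extended rational polyhedral cones, and define the tropical clutching map on cones by the inclusion $\sigma_{G_1}\times\sigma_{G_2}\hookrightarrow\econe{\sigma}_G$ that fixes the lengths on $E(G_1)\sqcup E(G_2)$ and sets the length of the new edge $e$ equal to $\infty$; functoriality over the category of graphs with edge contractions makes this a morphism of the generalized cone stacks. Commutativity of the tropicalization square then follows formally from the equivalence of Section~\ref{section_affine}: dualizing the characteristic assignment "generator indexed by $e$ maps to $\infty$" computed above yields exactly the assignment "length of $e$ equals $\infty$," so $\Trop$ applied to the log clutching and gluing maps reproduces their tropical counterparts.

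The main obstacle I anticipate is global functoriality rather than any single local computation. Concretely, one must check that the pointwise models assemble into a morphism of \emph{stacks}, which requires compatibility with the automorphisms of boundary strata---most delicately the branch-swapping involution of the loop in self-gluing---and that tropicalization is functorial on the enlarged category, so that the square of stacks, and not merely the square of individual extended cones, commutes. Establishing that $\Trop$ sends sub-logarithmic morphisms to morphisms of generalized cone stacks, compatibly with all contractions and symmetries, is the crux on which both halves of the theorem rest.
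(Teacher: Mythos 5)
Your proposal is correct in substance and lands on the same key formulas as the paper (the smoothing parameter, respectively edge length, of the new node is sent to $\infty$ while all other coordinates are preserved), but your logarithmic half takes a genuinely different route. You build the sub-logarithmic enhancement of Knudsen's maps bottom-up: \'etale-local charts indexed by dual graphs, the characteristic map sending the generator of the new edge of $\N^{E(G)}$ to the absorbing element and the remaining generators isomorphically onto those of $E(G_1)\sqcup E(G_2)$, then gluing the local models. The paper works top-down: it first proves that the stack $\calMbar_{g,n}^{log}$ of pointed stable logarithmic curves over $\mathbf{LSch}^\infty$ is representable by $\calMbar_{g,n}$ with the pointified boundary logarithmic structure (Proposition~\ref{prop_Mbarlog=Mbar}, an extension of F.~Kato's minimality argument resting on the factorization of sub-logarithmic morphisms, Corollary~\ref{cor_factoringlogschemes}, and the pushout Lemma~\ref{pushoutlemma}); Theorem~\ref{thm_logclutch&glue} is then nearly immediate, since clutching and gluing are defined as functors on families---glue the curves along the marked sections and declare $\delta=\infty_S$ at the new node---and representability converts this functor into a sub-logarithmic morphism whose underlying map is the classical one. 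This difference matters exactly at the point you yourself flag as the crux: compatibility of local models with transition data, stratum automorphisms, and the branch-swapping involution in self-gluing. In your approach these must be verified by hand (feasible, since the divisorial structure makes the local monoid maps canonical, but you only assert it); in the paper's approach they are automatic, because a functor defined on families never needs to be glued. Your tropical half essentially coincides with the paper's: Theorem~\ref{thm_tropclutch&glue} defines the maps on $\sigmabar$-points by attaching the new edge with length $\infty$, and commutativity of the squares is read off from the $\delta=\infty_S$ description in Theorem~\ref{thm_logclutch&glue}, which is the same duality argument you give. One small imprecision: your tropical clutching map should be defined on the extended cones $\econe{\sigma}_{G_1}\times\econe{\sigma}_{G_2}$ rather than on $\sigma_{G_1}\times\sigma_{G_2}$, since the source stacks are themselves extended cone stacks (Proposition~\ref{prop_Mgn=Mbargn}).
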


Since the $i$-th universal section of the universal family over $\calMbar_{g,n}^{log}$ is itself an instance of a clutching map, it is a straightforward corollary that the tropicalization of such a universal section provides a universal section of the tropicalization of the family. We expect such tropical universal sections will be necessary for the development of tropical analogues to the standard tautologial classes on these moduli spaces. 

In \cite{CavalieriChanUlirschWise_tropstack}*{Section 6} the authors chose to use a different route to realize clutching and gluing in logarithmic geometry. The main difference in \cite{CavalieriChanUlirschWise_tropstack} is that clutching and gluing become correspondences instead of the morphisms we study in this present paper. The approach using correspondences gives rise to \emph{generalized clutching and gluing maps} on the tropical side, as explained in \cite{ACP}*{Section 4 and 8}.

\subsection{Pointed logarithmic structures}\label{intro_examples} The central constructions of Section~\ref{section_pointedlogstr} are formulated precisely to extend the category of logarithmic schemes by allowing new ``sub-logarithmic" morphisms given by logarithmic morphisms to a logarithmic stratum of the target. Such morphisms are in general not logarithmic for the same reason that the inclusions of torus orbit closures of a toric variety are not toric morphisms: locally they are not determined by a group homomorphism of character lattices for the underlying dense tori. Our enriched category ameliorates the situation, using the dual theories of pointed toric monoids and extended rational polyhedral cones (see Section~\ref{section_affine}) to identify the \'etale local combinatorial structure of sub-logarithmic morphisms.

As a first brief example, consider the inclusion of the origin $0\in\A^1=\Spec(k[x])$ in the affine line with its toric logarithmic structure. Given the strict pullback logarithmic structure, the origin as a logarithmic scheme takes the form of the standard logarithmic point $0:=(\Spec k,\N\oplus k^\ast)$. This does admit a logarithmic map $0\in \A^1$ determined by $\N\rightarrow \N$ sending $1\mapsto 1$. However, endowing $0=(\Spec k,k^\ast)$ with its toric logarithmic structure (i.e.\ the trivial logarithmic structure) does not allow for such an inclusion map in the category of logarithmic schemes: it would be sub-logarithmic, determined by a map of pointed monoids $\N\cup\{\infty\}\rightarrow \{0,\infty\}$ sending $1\mapsto \infty$.

In one dimension higher, the inclusion $\iota:\A^1\hookrightarrow \A^2$ of the affine line $\A^1$ as the axis $y=0$ in $\A^2$ given by the dual of the homomorphism $k[x,y]\rightarrow k[t]$ sending $x\mapsto t$ and $y\mapsto 0$ is \emph{not} a logarithmic morphism when we use the logarithmic structures induced by the toric boundaries of $\A^1$ and $\A^2$ (namely, those associated to the inclusions $\N\hookrightarrow k[\N]$ and $\N^2\hookrightarrow k[\N^2]$). Indeed, such a logarithmic morphism would need to be induced by a commutative diagram 
\[
\begin{tikzcd}
k[x,y]\cong k[\N^2] \arrow[r]&  k[\N]\cong k[t]\\
\N^2\arrow[r]\arrow[u]& \N \arrow[u]
\end{tikzcd}
\]
sending $(0,1)\mapsto y\mapsto 0$, which does not occur for any monoid morphism $\N^2\rightarrow \N$. The desired morphism is instead sub-logarithmic, induced by the morphism of pointed monoids \[\N^2\cup\{\infty\}\rightarrow\N\cup\{\infty\}\] sending $(1,0)\mapsto 1$ and $(0,1)\mapsto \infty$.

Finally, given a fine and saturated logarithmic scheme $X$ that is logarithmically smooth over $k$ with $M_X$ defined in the Zariski topology on $X$, there is a natural stratification of $X$ by locally closed subsets (the logarithmic strata) that are in a one-to-one correspondence with the cones in the (rational polyhedral) cone complex associated to $X$ (see \cite[Corollary 3.5]{Ulirsch_tropcomplogreg}). With the addition of sub-logarithmic morphisms, this stratification is lifted from the category of schemes to the category of pointed logarithmic schemes, allowing for a richer theory of ``logarithmically stratified spaces" where the stratification can be combinatorially described using the local pointed monoids of the logarithmic structure. We discuss this briefly in Section~\ref{logstrat}.

\subsection{Acknowledgements} This project originated in conversations with Noah Giansiracusa and Jeffery Giansiracusa, who we would like to cordially thank. The authors also profited from discussions with Dan Abra\-movich, Renzo Cavalieri, Melody Chan, Sam Payne, Dhruv Ranganathan, and Jonathan Wise. We thank two anonymous referees for their close reading and insightful comments which helped greatly improve this paper. Parts of this project have been completed while the last two authors were visiting the Fields Institute for Research in Mathematical Sciences in Toronto and the Max Planck Institute for Mathematics in the Sciences in Leipzig; we would like to thank both institutions for their hospitality.

\section{Pointed monoids and extended rational polyhedral cones}\label{section_affine}

\subsection{Monoids and rational polyhedral cones}\label{foundations}

A \emph{monoid} $P$ is a commutative semigroup with a neutral element, usually written additively as $(P,+, 0)$. A monoid is said to be \emph{integral} if the natural map $P\rightarrow P^{gp}$ into its Grothendieck group is injective. A monoid is \emph{fine} if it is finitely generated and integral. An integral monoid is \emph{saturated} if, whenever we have $n\cdot p\in P$ for some $n>0$ and $p\in P^{gp}$, then $p\in P$. A fine and saturated monoid is \emph{toric} if it is torsion-free. Denote by $\mathbf{Mon}_{fs}$ the category of fine, saturated monoids and by $\mathbf{Mon}_{toric}$ the full subcategory of toric monoids.

An ideal $I$ of $P$ is a subset such that $p + I\subseteq I$ for all $p\in P$. An ideal $\frakp$ is called \emph{prime} if, whenever $p+q\in \frakp$ then either $p\in \frakp$ or $q\in \frakp$, or, equivalently, if the complement $P-\frakp$ is a submonoid. We write $\Spec P$ for the set of prime ideals of $P$. It carries a natural \emph{Zariski topology} generated by the basic open subsets $D(f)=\big\{\frakp\in \Spec P\big\vert f\notin \frakp\big\}$ for $f\in P$ (see \cite{Kato_toricsing} and \cite{Ulirsch}*{Section 3} for details). 

A \emph{(strictly convex) rational polyhedral cone} (cone for short) is a pair $(\sigma,N)$, consisting of a finitely generated free abelian group $N$ and a cone $\sigma\subseteq N_\R:=N\otimes \R$, that is, a finite intersection of half-spaces $\sigma=\cap_i H_i$ of the form 
\[H_i:=\Big\{u\in N_\R\Big| \langle u,m_i\rangle\geq 0\Big\} \ ,\] 
where the $m_i$ are elements of the dual lattice $M:=\Hom(N,\Z)$, so that $\sigma$ does not contain any non-trivial linear subspaces.  A morphism of rational polyhedral cones $(\sigma,N)\longrightarrow(\sigma',N')$ is a morphism $\varphi\in\Hom(N,N')$ of lattices whose canonical extention to the linear map of vector spaces $\varphi_\R:{N}_\R\longrightarrow{N'}_\R$ such that $\varphi_\R(\sigma)\subseteq \sigma'$. Abusing notation, we will denote $(\sigma,N)$ simply as $\sigma$. We denote the category of rational polyhedral cones by $\mathbf{RPC}$. 

The \emph{dual cone} $\sigma^\vee$ of $\sigma$ is given by
$$\sigma^{\vee}=\Big\{v\in M_{\R}\Big|\langle u,v\rangle\geq0\text{ for all }u\in\sigma\Big\}.$$ 
It may also be described as a finite intersection of dual half-spaces built from the $H_i$ above. A \emph{face} of a cone, i.e. an intersection $\tau=H_m\cap\sigma$, for some $m\in\sigma^{\vee}$, is written as $\tau\preceq\sigma$. The \emph{dual face} to $\tau$ is the face of $\sigma^{\vee}$ defined by $\sigma^{\vee}\cap\tau^{\perp}$, where
$$\tau^{\perp}=\Big\{u\in M_{\R}\Big|\langle u,v \rangle=0 \text{ for all } v\in\tau\Big\}.$$

We finish by recalling the following well-known Proposition \ref{prop_toricmonoid=ratpolycone}.

\begin{proposition}\label{prop_toricmonoid=ratpolycone}
\begin{enumerate}[(i)]
\item There is a natural contravariant equivalence of categories
\begin{equation*}\begin{split}
S\colon\mathbf{Mon}_{toric}&\xlongrightarrow{\sim}\mathbf{RPC}\\
P&\longmapsto \sigma_P=\Hom(P,\R_{\geq 0}) \\ 
S_\sigma=\sigma^\vee \cap M&\longmapsfrom (\sigma, N) \ .
\end{split}\end{equation*}
\item Let $P$ be a toric monoid and $\sigma=\sigma_P$ the associated cone. Given a face $\tau$ of $\sigma$, we define $\frakp_\tau:=S_\sigma-\tau^\perp$. The above equivalence induces a natural one-to-one correspondence
\begin{equation*}\begin{split}
\big\{\textrm{ faces of }\sigma \big\} & \xlongrightarrow{\sim} \Spec P\\
\tau &\longmapsto \frakp_\tau 
\end{split}\end{equation*}
such that $\tau'\preceq\tau\preceq \sigma_P$ if and only if $\frakp_{\tau'}$ specializes to $\frakp_\tau$. 
\end{enumerate}\end{proposition}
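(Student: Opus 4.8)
The plan is to construct the two functors explicitly, verify that they are mutually quasi-inverse on objects via the classical biduality of cones, and then deduce the face/prime correspondence in part~(ii) from the duality between the faces of a cone and those of its dual together with the identification of prime ideals with complements of facial submonoids.

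For part~(i) I would first check that both assignments land in the correct categories. Given a toric monoid $P$, set $M = P^{gp}$ and $N = \Hom(M,\Z)$; since $P$ is finitely generated, $\sigma_P = \Hom(P,\R_{\geq 0})$ is cut out inside $N_\R$ by the finitely many inequalities $\langle\,\cdot\,,p_i\rangle \geq 0$ for generators $p_i$ of $P$, hence is rational polyhedral, and it is strictly convex because $P$ generates $M$: any $n$ with $\pm n \in \sigma_P$ satisfies $\langle n,p\rangle = 0$ for all $p\in P$, hence for all $m\in M$, forcing $n=0$. Conversely, Gordan's lemma guarantees that $S_\sigma = \sigma^\vee\cap M$ is finitely generated, and it is visibly integral, torsion-free, and saturated, so it is a toric monoid. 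Contravariance is built into $\Hom(-,\R_{\geq 0})$, and functoriality on morphisms is routine once one notes that a monoid morphism and the induced dual lattice map carry cones into cones compatibly.

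The heart of part~(i) is that the two round trips are the identity. For $\sigma \mapsto S_\sigma \mapsto \sigma_{S_\sigma}$, strict convexity of $\sigma$ makes $\sigma^\vee$ full-dimensional, so $S_\sigma$ generates $M$ and $\sigma_{S_\sigma}$ again lives in $N_\R$; since $\sigma^\vee$ is generated by its lattice points, $\Cone(S_\sigma) = \sigma^\vee$, and biduality gives $\sigma_{S_\sigma} = (\sigma^\vee)^\vee = \sigma$. For $P \mapsto \sigma_P \mapsto S_{\sigma_P}$, biduality identifies $\sigma_P^\vee$ with $\Cone(P)$, so $S_{\sigma_P} = \Cone(P)\cap M$; the inclusion $P \subseteq \Cone(P)\cap M$ is clear, and the reverse inclusion is exactly where saturation enters: any $m\in\Cone(P)\cap M$ is a nonnegative rational combination of generators of $P$ (the defining linear system is rational, so admits a rational solution), hence $Nm\in P$ for some $N>0$, and saturation forces $m\in P$. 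I expect this saturation step to be the main technical point to get right.

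For part~(ii) I would first verify directly that $\frakp_\tau = S_\sigma - \tau^\perp$ is a prime ideal: it is an ideal because for $m\in\frakp_\tau$ with $\langle v,m\rangle > 0$ for some $v\in\tau$ and any $p\in S_\sigma\subseteq\sigma^\vee$ one has $\langle v,m+p\rangle > 0$; and it is prime because its complement $S_\sigma\cap\tau^\perp = (\sigma^\vee\cap\tau^\perp)\cap M$ is a submonoid. I would then factor $\tau\mapsto\frakp_\tau$ through two standard bijections: the inclusion-reversing face duality $\tau\mapsto \tau^\ast := \sigma^\vee\cap\tau^\perp$ between faces of $\sigma$ and faces of $\sigma^\vee$ (with $(\tau^\ast)^\ast = \tau$), and the bijection between prime ideals of $S_\sigma$ and their complementary facial submonoids, under which a face $\gamma\preceq\sigma^\vee$ corresponds to $\gamma\cap M$. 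Since the complement of $\frakp_\tau$ is precisely $\tau^\ast\cap M$, the composite is a bijection, establishing the correspondence. Finally, the order statement follows by tracking inclusions: $\tau'\preceq\tau$ gives $\tau^\perp\subseteq(\tau')^\perp$, hence $\frakp_{\tau'}\subseteq\frakp_\tau$, which is exactly the condition that $\frakp_{\tau'}$ specializes to $\frakp_\tau$ in the Zariski topology on $\Spec P$.
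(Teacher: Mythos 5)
Your proposal is correct, and it is worth recording how it differs from the paper's proof. For part (i) the paper defers entirely to the literature, noting only that the lattice of $\sigma_P$ is $\Hom(P^{gp},\Z)$; your argument --- Gordan's lemma for finite generation, biduality for the two round trips, with saturation as the key step in showing $S_{\sigma_P}=P$ --- is precisely the standard proof being cited, so here you are more complete than the paper. For part (ii) the routes genuinely diverge in organization: the paper constructs an explicit inverse $\frakp\mapsto\tau_\frakp:=\Cone(P-\frakp)^\perp\cap\sigma_P$ and verifies $\frakp_{\tau_\frakp}=\frakp$ by a direct chain of set-theoretic equalities (leaving the other round trip to the reader), whereas you factor $\tau\mapsto\frakp_\tau$ through the inclusion-reversing face duality $\tau\mapsto\sigma^\vee\cap\tau^\perp$ composed with the correspondence between faces of $\sigma^\vee$ and prime ideals of $S_\sigma$. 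The two are secretly identical --- the paper's $\tau_\frakp$ is exactly the dual face of $\Cone(P-\frakp)$ --- but your factorization makes the order-reversal visible, so the specialization statement drops out immediately; that is what your route buys. Be aware of two small debts it incurs. First, the second ``standard bijection'' you invoke (faces $\gamma\preceq\sigma^\vee$ correspond to facial submonoids $\gamma\cap M$, with inverse $F\mapsto\Cone(F)$) hides the same saturation/rationality argument you isolated in part (i): if pressed, you must show $\Cone(F)\cap M=F$ for a facial submonoid $F$ and that the complement of any prime ideal is such an $F$ --- this is exactly the content of the paper's displayed computation, so it cannot be waved away without risking circularity with the very statement being proved. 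Second, like the paper, you spell out only one implication of the ``if and only if'': the converse (specialization implies $\tau'\preceq\tau$) does follow from your setup, since $\frakp_{\tau'}\subseteq\frakp_\tau$ gives $\sigma^\vee\cap\tau^\perp\cap M\subseteq\sigma^\vee\cap(\tau')^\perp\cap M$, taking cones gives containment of the dual faces, and face duality reverses it --- but it deserves a sentence.
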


\begin{proof}
Part (i) is well-known and we leave the details of the proof to the avid reader (also see e.g. \cite{Fulton}*{Section 1.2} or \cite{Ulirsch}*{Proposition 2.2}). Note, hereby, that the lattice associated to the monoid $\sigma_P$ is given by the dual $\Hom(P^{gp},\Z)$ of $P^{gp}$. 

For Part (ii), note that $\frakp_\tau:=S_\sigma-\tau^\perp$ is a prime ideal of $S_\sigma$ since it consists of the lattice points of the complement of a face of $\sigma^\vee$. Given a prime ideal $\frakp$ of $P$, we define the associated face $\tau_\frakp$ of $\sigma_P$ to be 
\begin{equation*}
\tau_\frakp={\rm Cone}(P - \frakp)^\perp \cap \sigma_P \ .
\end{equation*}
Let $\frakp$ be a prime ideal of $P$ and $\tau=\tau_\frakp$ the associated face of $\sigma_P$. Since $S_\sigma=\sigma^\vee\cap M=P$ and $\sigma$ is a full dimensional cone inside $N_\R$, we have 
\begin{align*}
\frakp_\tau&=(\sigma^\vee- \tau^\perp)\cap M\\
&=P-({\rm Cone}(P - \frakp)^\perp \cap \sigma)^\perp \cap M\\
&=P-({\rm Cone}(P - \frakp) + \sigma^\perp)\cap M\\
&=P-(P-\frakp)=\frakp \ .
\end{align*}
Therefore we have $\frakp_{\tau_\frakp}=\frakp$. The converse verification that $\tau_{\frakp_\tau}=\tau$ is left to the reader. Finally, notice that $\tau'$ is a proper face of $\tau$ if and only if there is an element $f\in S_\sigma$ such that $f\notin \frakp_{\tau'}$ and $f\in\frakp_\tau$. This implies that $\frakp_\tau$ is in the closure of $\frakp_{\tau'}$, i.e. that $\frakp_{\tau'}$ specializes to $\frakp_\tau$.
\end{proof}

\subsection{Pointed Monoids}\label{section_pointedmonoids} 
A \emph{pointed monoid} is  a monoid $P$ containing a non-zero \emph{absorbing element}, which is to say, an element $\infty_P\in P$ satisfying \[p+\infty_P=\infty_P\] for every element $p\in P$. Such an absorbing element is unique when it exists. A morphism $f\colon P\longrightarrow Q$ between pointed monoids is a monoid homomorphism mapping the absorbing element of $P$ to the absorbing element of $Q$. Denote by $\mathbf{Mon}^{\infty}$ the category of \emph{pointed monoids}. Although $\mathbf{Mon}^{\infty}$ is a subcategory of the category of monoids $\mathbf{Mon}$, it is not a full subcategory: for instance, the map $\{0,\infty\}\rightarrow\{0,\infty\}$ taking both elements to $0$ is a monoid homomorphism, but not a morphism of pointed monoids. We refer the reader to \cite{Thesis} for further background on the theory of pointed monoids.

There is a \emph{pointification functor}
\[ 
[.]^\infty:\mathbf{Mon}\longrightarrow\mathbf{Mon}^{\infty}
\] 
that takes a monoid $P$ to $\pmon{P}:=P\sqcup\{\infty_P\}$ and a morphism $f:P\longrightarrow Q$ of monoids to the morphism $\pmon{f}:\pmon{P}\longrightarrow\pmon{Q}$ defined by $f^\infty|_P=f$, and $f^\infty(\infty_P)=\infty_Q$. The category of \emph{pointed toric monoids}, which we denote $\mathbf{Mon}_{toric}^{\infty}$, is the full subcategory of $\mathbf{Mon}^{\infty}$ generated by the image of $\mathbf{Mon}_{toric}$ via the above pointification functor $[.]^\infty$.

To set a clear notation, we will decorate all objects of $\mathbf{Mon}^{\infty}$ with the superscript infinity, and simply notate the underlying unpointed toric monoid of $\pmon{P}$ by $P$. Note that we may canonically identify the sets $\Hom_\mathbf{Mon}(P,Q^\infty)$ and $\Hom_{\mathbf{Mon}^\infty}(P^\infty,Q^\infty)$ since morphisms of pointed monoids must send absorbing elements to absorbing elements (this is not the case when the co-domain $Q$ is not already pointed). In particular, this identification is used implicitly in the definition of an extended rational polyhedral cone (see Section~\ref{section_extendedcones})

For any ideal $I\subseteq P$ there is a canonical minimal extension to an ideal $I\subseteq \pmon{P}$ given by including $\infty_P\in I$, and likewise in the other direction by removing the absorbing element. We abuse notation by using the same notation $I$ for both ideals. For each morphism $f:\pmon{P}\longrightarrow\pmon{Q}$ of pointed monoids, the set $f^{-1}(\infty_Q)$ is a prime ideal of $\pmon{P}$.

In the case that $f^{-1}(\infty_Q)=\{\infty_P\}$ consists only of the absorbing element of $\pmon{P}$, the map $f$ is simply the image via the pointification functor of the restriction $f|_{P}:P\longrightarrow Q$; we also call such a morphism of pointed monoids \emph{toric}.

Given any ideal $I\subseteq \pmon{P}$, the standard construction of a quotient monoid is given by the \emph{Rees quotient} \cite{Ree}
$$\displaystyle \pmon{P}/I=(\pmon{P}-I)\sqcup\{\infty\}.$$ 
\noindent The Rees quotient comes with the surjection
$$[.]:\pmon{P}\rightarrow\pmon{\pmon{P}/I},$$
which restricts to the identity on $\pmon{P}-I$ and takes $I$ to $\infty$.
We then see the Rees quotient has a monoid structure given by
\[ [p]+[q]=[p+q]. \] 

\noindent This construction works for ideals in both pointed and unpointed monoids, but always produces a pointed monoid as a quotient. The universal property of Rees quotients yields that any morphism $f:\pmon{P}\longrightarrow\pmon{Q}$ in $\mathbf{Mon}_{toric}^{\infty}$ admits a unique factorization 
\[
\begin{tikzcd}
\pmon{P}\arrow[r,"g"']   &  \pmon{P}/f^{-1}(\infty_Q)\arrow[r,"h"'] &   \pmon{Q}
\end{tikzcd}
\]
where $g$ is the quotient map and $h^{-1}(\infty_Q)$ consists only of the absorbing element of $\pmon{P}/f^{-1}(\infty)$. 

We now describe products and pushouts of pointed toric monoids. This is studied in general for pointed monoids in \cite{Thesis}*{Section~1}, where the smash product notation in place of the standard notations for product and amalgamated sum. We will stick with the more standard notations as no confusion should arise for our present needs.

The product $\prod_{i\in J}\pmon{P_i}$ of a family of pointed monoids $\{\pmon{P_i}\}_{i\in J}$ in the category $\mathbf{Mon}_{toric}^{\infty}$ is given by first taking their product in the category of monoids, and then taking the Rees quotient by the ideal \[I=\big\{(p_i)_{i\in J}\big|p_i=\infty \text{ for some }i\in J \big\}.\]  In this way, the product of pointed monoids produces the pointification of their underlying product: 
\[\prod_{i\in J} \pmon{P_i} = \left(\prod_{i\in J}P_i\right)^{\infty}.\]
Given two toric morphisms $\pmon{P}\longrightarrow\pmon{Q}$ and $\pmon{P}\longrightarrow\pmon{Q'}$ then, it is straightforward to see that their pushout 
\[
\begin{tikzcd}
\pmon{P}\arrow[r]\arrow[d]&\pmon{Q}\arrow[d]\\
\pmon{Q'}\arrow[r]&\pmon{Q}\oplus_{\pmon{P}}\pmon{Q'}
\end{tikzcd}
\]
is given by the pointification of the underlying amalgamated sum. In other words, we have 
\begin{equation*}
\pmon{Q}\oplus_{\pmon{P}}\pmon{Q'}=\left(Q\oplus_{P}Q'\right)^\infty \ .
\end{equation*}
More generally, we have the following lemma collecting some necessary facts from \cite{Thesis} which we prove here for the sake of the reader.

\begin{lemma}\label{pushoutlemma}
Let $f:\pmon{P}\longrightarrow\pmon{Q}$ and $g:\pmon{P}\longrightarrow \pmon{Q'}$ be morphisms in $\mathbf{Mon}_{toric}^{\infty}$. 
\begin{enumerate}
\item The pushout of $f$ and $g$, denoted $\pmon{Q}\oplus_{\pmon{P}}\pmon{Q'}$, exists in $\mathbf{Mon}_{toric}^{\infty}$. \item If $\pmon{Q'}=\pmon{P}/I$ is a Rees quotient, then $\pmon{Q}\oplus_{\pmon{P}}\pmon{P}/I=\pmon{Q}/(f(I)+\pmon{Q})$.
\item If both $\pmon{Q}=\pmon{P}/J$ and $\pmon{Q'}=\pmon{P}/I$ are Rees quotients, then $\pmon{P}/J\oplus_{\pmon{P}}\pmon{P}/I=\pmon{P}/I\cup J$.
\end{enumerate}
\end{lemma}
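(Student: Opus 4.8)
The plan is to establish (2) and (3) by a direct inspection of the universal property of the pushout, and then to deduce the existence statement (1) by combining the toric pushout formula with a single Rees quotient via the canonical factorization of a morphism. For (3), observe that a cocone under the diagram $\pmon{P}/J\leftarrow\pmon{P}\to\pmon{P}/I$ with vertex $\pmon{S}$ is the same datum as a single morphism $\phi\colon\pmon{P}\to\pmon{S}$ that factors through both quotients, i.e.\ satisfies $\phi(I)=\phi(J)=\infty$; since this is equivalent to $\phi(I\cup J)=\infty$, such $\phi$ correspond bijectively to morphisms $\pmon{P}/(I\cup J)\to\pmon{S}$, which gives (3). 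For (2), a cocone is a pair $(u\colon\pmon{Q}\to\pmon{S},\,v\colon\pmon{P}/I\to\pmon{S})$ with $u\circ f=v\circ g$; the compatibility forces $u$ to send $f(I)$, hence the whole ideal $f(I)+\pmon{Q}$, to $\infty$, and conversely any such $u$ determines $v$ uniquely since $g$ is surjective. Thus these cocones are exactly the morphisms out of $\pmon{Q}/(f(I)+\pmon{Q})$, proving (2).

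For (1), I would first reduce each leg to its toric part. Set $I'=f^{-1}(\infty)$ and $J'=g^{-1}(\infty)$, which are prime, and factor $f=\bar f\circ q_{f}$ and $g=\bar g\circ q_{g}$ through the Rees quotients $\pmon{P}/I'$ and $\pmon{P}/J'$, with $\bar f,\bar g$ toric. By Proposition~\ref{prop_toricmonoid=ratpolycone} the complements $F:=P\setminus I'$ and $G:=P\setminus J'$ are faces, hence toric submonoids, so writing $\bar f,\bar g$ as pointifications of monoid morphisms $\phi_{f}\colon F\to Q$ and $\phi_{g}\colon G\to Q'$ is legitimate. The crucial observation is that $F\cap G=P\setminus(I'\cup J')$, so that $\pmon{R}:=\pmon{P}/(I'\cup J')=(F\cap G)^{\infty}$ is again a pointification. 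Restricting $\phi_{f}$ and $\phi_{g}$ to the common face $F\cap G$ yields toric morphisms $f_{R}\colon\pmon{R}\to\pmon{Q}$ and $g_{R}\colon\pmon{R}\to\pmon{Q'}$, whose pushout is the toric pushout $T:=\pmon{Q}\oplus_{\pmon{R}}\pmon{Q'}=(Q\oplus_{F\cap G}Q')^{\infty}$ provided by the toric case discussed before the lemma.

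I then claim that $\pmon{Q}\oplus_{\pmon{P}}\pmon{Q'}\cong T/K$, where $K$ is the ideal of $T$ generated by the images of $\phi_{f}(F\cap J')$ and $\phi_{g}(G\cap I')$, i.e.\ by the finite values that the opposite leg sends to the absorbing element. To verify this I would test against an arbitrary $\pmon{S}$ and analyze the identity $u\circ f=v\circ g$ according to whether $p\in\pmon{P}$ lies in $F\cap G$, in $F\cap J'$, in $G\cap I'$, or in $I'\cap J'$ (these four regions partition $\pmon{P}$). On $F\cap G$ the condition reads $u\circ f_{R}=v\circ g_{R}$, which is exactly the datum of a morphism out of $T$; on $F\cap J'$ it reads $u(\phi_{f}(p))=\infty$ and on $G\cap I'$ it reads $v(\phi_{g}(p))=\infty$, which together say that this morphism annihilates $K$; and on $I'\cap J'$ it is automatic. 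This correspondence is natural in $\pmon{S}$, so $T/K$ has the required universal property, and being a Rees quotient of the pointification of a toric monoid it lies in $\mathbf{Mon}_{toric}^{\infty}$, establishing (1). (When $\pmon{P},\pmon{Q},\pmon{Q'}$ are nontrivial Rees quotients rather than pointifications, one first reduces to the pointification case using (2).)

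The main obstacle is exactly the step packaged in the previous paragraph. The tempting shortcut---factor both legs and paste pushout squares built only from (2), (3), and the toric formula---does not work: the maps induced on the intermediate quotients are in general \emph{not} toric, since Rees-quotienting a target can absorb elements that were not meant to be collapsed, so one never arrives at a clean toric-to-toric square to which the toric formula applies. The resolution is to perform the amalgamation over the common face $F\cap G$ first and to collect all of the asymmetric absorbing behavior into the single Rees quotient by $K$; the heart of the argument is then checking that the four-case analysis above genuinely exhausts the compatibility condition $u\circ f=v\circ g$. A secondary subtlety, which we import from the toric case, is that $(Q\oplus_{F\cap G}Q')^{\infty}$ already builds in the saturation required to remain within toric monoids.
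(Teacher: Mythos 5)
Your arguments for (2) and (3) coincide in substance with the paper's: the paper also proves (2) by the universal property of the Rees quotient and then deduces (3) from it, so the only difference there is that you verify (3) directly. For (1) you take a genuinely different route, and your stated reason for doing so is in fact a correct criticism of the paper's own proof. The paper factors each leg as a Rees quotient followed by a toric morphism, uses (2) and (3) to paste a grid of pushout squares, asserts that the two residual maps $\pmon{P}/(I'\cup J')\longrightarrow \pmon{Q}/(f(J')+\pmon{Q})$ and $\pmon{P}/(I'\cup J')\longrightarrow \pmon{Q'}/(g(I')+\pmon{Q'})$ are toric, and finishes by taking the pointification of the pushout of their underlying toric morphisms. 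That toricity assertion fails in general: take $P=\N^2$, let $f$ be the pointification of $(a,b)\mapsto a+b$ into $\pmon{\N}$ (so $I'=\{\infty\}$), and let $g$ be the Rees quotient by the prime ideal $J'=\{(a,b):b\geq 1\}$; then $\pmon{P}/(I'\cup J')\cong\pmon{\N}$, the ideal $(f(J')+\pmon{Q})$ is $\N_{\geq 1}\cup\{\infty\}$, and the residual map $\pmon{\N}\rightarrow\{0,\infty\}$ sends $1\mapsto\infty$, hence is not toric. So the paper's final step is unjustified as written; it can be repaired, e.g.\ by iterating the factor-and-paste step, which terminates by induction on the dimension of the face $P-(I'\cup J')$, but the paper does not do this. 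Your alternative --- amalgamate over the common face first to form $T=(Q\oplus_{F\cap G}Q')^\infty$, then take a single Rees quotient by the ideal $K$ generated by the cross terms $\phi_f(F\cap J')$ and $\phi_g(G\cap I')$, verifying the universal property via the four-region decomposition of $\pmon{P}$ --- supplies exactly the missing content in one step, at the cost of a longer direct verification. One caveat, which you inherit from the paper rather than introduce: $T/K$, like the paper's $\pmon{Q}/(f(I)+\pmon{Q})$ in (2), is a Rees quotient by a possibly non-prime ideal and hence not literally a pointification of a toric monoid, so both your proof and the lemma's own statement implicitly enlarge $\mathbf{Mon}_{toric}^{\infty}$ to include such quotients; your closing parenthetical about reducing Rees-quotient inputs to pointifications gestures at this but is left as vague as it is in the paper.
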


\begin{proof} We begin with (2) and note that (3) follows immediately from (2) since $(f(I)+\pmon{P}/J)$ the smallest ideal in $\pmon{P}$ containing both $I$ and $J$ is simply $I\cup J$.
Now, let $(f(I)+\pmon{Q})$ be the smallest ideal in $\pmon{Q}$ containing the image of $I$ and consider the Rees quotient $h:\pmon{Q}\longrightarrow \pmon{Q}/(f(I)+\pmon{Q})$. Since the preimage of $\infty$ through the composition $h\circ g$ is an ideal in $\pmon{P}$ that must contain $I$ by construction, the morphism $g$ descends to a toric morphism $g_I:\pmon{P}/I\longrightarrow \pmon{Q}/(f(I)+\pmon{Q})$ fitting into a commutative diagram
\begin{equation}
\begin{tikzcd}\label{diag1}
\pmon{P}\arrow[r,"g"]\arrow[d,"f"']&\pmon{Q}\arrow[d,"h"]\\
\pmon{P}/I\arrow[r,"g_I"']&\pmon{Q}/(f(I)+\pmon{Q}).\\
\end{tikzcd}
\end{equation}

Consider any pointed toric monoid $\pmon{M}$ fitting into a commutative diagram
\begin{equation} \label{diag2}
\begin{tikzcd}
\pmon{P}\arrow[r,"g"]\arrow[d,"f"']&\pmon{Q}\arrow[d,"a"]\\
\pmon{P}/I\arrow[r,"b"']&M.
\end{tikzcd}
\end{equation}
By commutativity of this diagram, we must have an inclusion of ideals \[(f(I)+\pmon{Q}) \subseteq a^{-1}(\infty_M)\] in $\pmon{Q}$. The unique factorization of $a$ in $\mathbf{Mon}_{toric}^{\infty}$ may then be expressed as a composition 
\[
\begin{tikzcd}
\pmon{Q}\arrow[r,"h"] &\pmon{Q}/(f(I)+\pmon{Q})\arrow[r]&\pmon{Q}/ a^{-1}(\infty_M) \arrow[r] &\pmon{M} 
\end{tikzcd}
\]
of two Rees quotients followed by a toric morphism. The composition of the second and third map in this sequence gives our desired morphism $\pmon{Q}/(f(I)+\pmon{Q})\longrightarrow \pmon{M}$ fitting into the larger commutative diagram combining Diagrams~(\ref{diag1}) and (\ref{diag2}). Any other such map $\phi$ must factor first through the map $\pmon{Q}/(f(I)+\pmon{Q})\longrightarrow \pmon{Q}/a^{-1}(\infty)$, and precompose with $h$ to commute with $a$, so our map is unique.

To prove (1), we have that $f$ and $g$ factor uniquely as
\[
\begin{tikzcd}
\pmon{P}\arrow[r] & \pmon{P}/f^{-1}(\infty_Q)\arrow[r]& \pmon{Q}
\end{tikzcd}
\]
and 
\[
\begin{tikzcd}
\pmon{P}\arrow[r] & \pmon{P}/g^{-1}(\infty_{Q'})\arrow[r]& \pmon{Q'}
\end{tikzcd}
\]
where the first morphisms are Rees quotients and the second are toric. Using (2) and (3) we can construct a larger diagram
\[
\begin{tikzcd}
\pmon{P} \arrow[r]\arrow[d]& \pmon{P}/f^{-1}(\infty_Q)\arrow[r]\arrow[d] & \pmon{Q}\arrow[d]\\
\pmon{P}/g^{-1}(\infty_{Q'})\arrow[r]\arrow[d] & \pmon{P}/f^{-1}(\infty_Q)\cup g^{-1}(\infty_Q')\arrow[r]\arrow[d] & \pmon{Q}/(f(g^{-1}(\infty_{Q'}))+Q))\\
\pmon{Q'}\arrow[r]& \pmon{Q'}/(g(f^{-1}(\infty_{Q}))+Q')) &
\end{tikzcd}
\]
where all the squares are pushouts and the bottom-right two morphisms are toric. Thus the final desired pushout is the pushout of these final two toric morphisms, i.e.\ the pointification of the pushout of the underlying morphisms in the category of toric monoids.
\end{proof}

\subsection{Extended rational polyhedral cones}\label{section_extendedcones}

Following \cite{Thuillier} and \cite[Section~2]{ACP}, a rational polyhedral cone $\sigma=\Hom_\mathbf{Mon}(P,\R_{\geq0})$ has a canonical compactification given by 
\begin{equation*}
\overline{\sigma}=\Hom_\mathbf{Mon}(P,\R^\infty_{\geq0})=\Hom_{\mathbf{Mon}^\infty}(\pmon{P},\R^\infty_{\geq0})
\end{equation*}  
where $\R^\infty_{\geq0}=\R_{\geq0}\cup\{\infty\}$ is the extended non-negative real half-line, a pointed monoid under addition. We call $\overline{\sigma}$ the associated \emph{extended rational polyhedral cone} (extended cone for short).

Given a face $\tau$ of $\sigma$, let $N_\tau$ denote the sublattice of $N$ spanned by the points in $\tau\cap N$ and set $N(\tau)=N/N_\tau$. Letting $[.]:N\rightarrow N(\tau)$ denote the quotient map, we define the \emph{cone quotient}, $\sigma/\tau$, as $[\sigma]$. The faces of $\sigma$ containing $\tau$ correspond to the faces of $\sigma/\tau$ via $\tau'\mapsto[\tau']$. 

We denote by $\phi_\tau:\sigma\longrightarrow\sigma/\tau$ the corresponding quotient map of cones. Given any sequence of intermediate faces $\tau\preceq\tau'\preceq\sigma$ the quotient map factors uniquely as a composition 

\[
\begin{tikzcd}
\phi_{\tau'}\colon\sigma\arrow[r,"\phi_\tau"]& \sigma/\tau\arrow[r]&\sigma/\tau'.
\end{tikzcd}
\]

The extended boundary $\sigmabar-\sigma$ is a union of locally closed subsets, the \emph{faces at infinity}, determined uniquely by inclusions of faces $\tau'\preceq\tau$ in $\sigma$ (see e.g. \cite[Section 3]{Payne}, \cite{Rabinoff}*{Proposition 3.4}, and \cite{Ulirsch}*{Section 3.4}). We phrase this observation, adapted to our situation, as follows.

\begin{proposition}\label{prop_extendedcones} Let $\sigma$ be a rational polyhedral cone and $\tau\preceq\sigma$ one of its faces. 

\begin{enumerate}[(i)]
\item Given a face $\tau$ of $\sigma$, we define $i_{\tau}\colon \sigma/\tau\rightarrow \sigmabar$ by associating to an element in $\sigma/\tau$ represented by $v\in\sigma$ the unique element $i_\tau(v)\in\sigmabar=\Hom_{\mathbf{Mon}}(S_\sigma,\R^\infty_{\geq 0})$ such that
\begin{equation*}
u\longmapsto
\left\{\begin{array}{cl}
\langle v,u\rangle &\quad\textrm{ if } u\in \tau^\perp\cap S_\sigma  \\
\infty &\quad\textrm{ else}\\ \end{array}\right .
\end{equation*}
for all  $u\in S_\sigma$. The maps $i_\tau$ are well-defined and induce a stratification
\begin{equation*}
i\colon \bigsqcup_{\tau\preceq \sigma}\sigma/\tau\xlongrightarrow{\sim} \sigmabar
\end{equation*}
by locally closed subsets. 
\item Given a face $\tau$ of $\sigma$, the closure of $i(\sigma/\tau)$ in $\sigmabar$ is equal to the canonical compactification $\overline{\sigma/\tau}$ of $\sigma/\tau$. In fact, we have a commutative diagram
\begin{equation*}
\begin{tikzcd}
\bigsqcup_{\tau\preceq\tau'\preceq\sigma}\sigma/\tau' \arrow[r,"\sim"] \arrow[d,"\subseteq"]&\overline{\sigma/\tau}\arrow[d,"\overline{i}_\tau"]\\
\bigsqcup_{\tau'\preceq\sigma}\sigma/\tau' \arrow[r,"\sim"]&\sigmabar
\end{tikzcd}
\end{equation*}
\end{enumerate}
\end{proposition}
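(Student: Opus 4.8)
The plan is to exploit the fact that the target monoid $\R^\infty_{\geq0}$ decomposes into its \emph{finite part} $\R_{\geq0}$ and the single absorbing element $\infty$, the latter being the complement of the unique nonzero prime ideal $\{\infty\}$. Consequently every point $x\in\sigmabar=\Hom(S_\sigma,\R^\infty_{\geq0})$ records a \emph{finite locus} $x^{-1}(\R_{\geq0})=S_\sigma-x^{-1}(\infty)$, which is the complement of a prime ideal of $S_\sigma$ and hence a face; by Proposition~\ref{prop_toricmonoid=ratpolycone}(ii) this face has the form $\tau^\perp\cap S_\sigma$ for a \emph{unique} face $\tau\preceq\sigma$. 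The assignment $x\mapsto\tau$ is the organizing principle for both parts.

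For part (i) I would first check that each $i_\tau$ is well defined. Writing $F=\tau^\perp\cap S_\sigma=S_\sigma-\frakp_\tau$, the set $F$ is a face of $S_\sigma$, so its complement $\frakp_\tau$ is a prime ideal; the formula for $i_\tau(v)$ is therefore additive (if a sum lands in $F$ then so do both summands, and if a summand lies in $\frakp_\tau$ then so does the sum, both sides being $\infty$), so $i_\tau(v)\in\sigmabar$. Independence of the representative $v$ holds because two representatives differ by an element of $\Span(\tau)=(\tau^\perp)^\perp$, which pairs trivially with $F\subseteq\tau^\perp$. Injectivity of $i_\tau$ is the converse: $F$ spans $\tau^\perp$ linearly, being the dual face $\tau^\perp\cap\sigma^\vee$ of $\tau$, so equality of $i_\tau(v)$ and $i_\tau(v')$ on $F$ forces $v-v'\in\Span(\tau)$. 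The crucial identification is $S_{\sigma/\tau}=\tau^\perp\cap S_\sigma=F$, coming from $(\sigma/\tau)^\vee=\tau^\perp\cap\sigma^\vee$ inside the lattice $\tau^\perp\cap M$; under it $\sigma/\tau=\Hom(F,\R_{\geq0})$ by Proposition~\ref{prop_toricmonoid=ratpolycone}(i). Surjectivity of $i$ then reads off from the organizing principle: given $x$ with finite locus $F=\tau^\perp\cap S_\sigma$, the restriction $x|_F$ is an element of $\Hom(F,\R_{\geq0})=\sigma/\tau$, and $i_\tau(x|_F)=x$; the strata are disjoint because $\tau$ is recovered from the finite locus. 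Finally each stratum is locally closed: after embedding $\sigmabar\hookrightarrow(\R^\infty_{\geq0})^r$ via generators of $S_\sigma$, the condition ``finite exactly on the generators lying in $F$'' is an intersection of open conditions $x(u_j)<\infty$ with closed conditions $x(u_j)=\infty$.

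For part (ii) I would extend $i_\tau$ to $\overline{i}_\tau\colon\overline{\sigma/\tau}=\Hom(F,\R^\infty_{\geq0})\to\sigmabar$ by the same recipe, sending $\phi\colon F\to\R^\infty_{\geq0}$ to the homomorphism equal to $\phi$ on $F$ and to $\infty$ on $\frakp_\tau$; the prime-ideal argument again shows this is well defined and injective, and it is continuous, being given coordinatewise by $\phi(u_j)$ or the constant $\infty$. Applying part (i) to the cone $\sigma/\tau$ stratifies $\overline{\sigma/\tau}$ by its faces, which by the correspondence recalled before the proposition are exactly the $[\tau']$ for $\tau\preceq\tau'\preceq\sigma$, with stratum $(\sigma/\tau)/[\tau']\cong\sigma/\tau'$ by transitivity of cone quotients. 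A direct comparison of the two recipes, using $[\tau']^\perp=(\tau')^\perp$ inside $\tau^\perp$ together with $(\tau')^\perp\cap S_\sigma\subseteq F$, shows that $\overline{i}_\tau$ carries the $[\tau']$-stratum of $\overline{\sigma/\tau}$ onto $i(\sigma/\tau')$; this is precisely the asserted commutative diagram, and it identifies the image of $\overline{i}_\tau$ with $\bigsqcup_{\tau\preceq\tau'}i(\sigma/\tau')$.

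It remains to identify this image with the closure of the open stratum $i(\sigma/\tau)$, and this is the step I expect to be the main obstacle. Closedness is soft: since $\R^\infty_{\geq0}\cong[0,\infty]$ is compact, $\overline{\sigma/\tau}$ is compact, so its continuous image $\overline{i}_\tau(\overline{\sigma/\tau})$ is closed in $\sigmabar$, whence $\overline{i(\sigma/\tau)}$ is contained in it. The content is the reverse inclusion, namely density of the open stratum in its canonical compactification. I would prove this by hand: given $x=i_\tau([v])$, choose $w$ in the relative interior of $\tau$ (so that $\langle w,u\rangle=0$ on $\tau^\perp\cap S_\sigma$ and $\langle w,u\rangle>0$ on $\frakp_\tau=S_\sigma-\tau^\perp$) and consider the net $v_t=v+tw\in\sigma$ for $t>0$. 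Then $i_{\{0\}}(v_t)(u)=\langle v,u\rangle$ is constant equal to $x(u)$ for $u\in\tau^\perp\cap S_\sigma$, while $\langle v_t,u\rangle=\langle v,u\rangle+t\langle w,u\rangle\to\infty=x(u)$ for $u\in\frakp_\tau$ as $t\to\infty$; hence $v_t\to x$ in $\sigmabar$. Applying this density statement to the cone $\sigma/\tau$ and transporting along the homeomorphism of $\overline{i}_\tau$ onto its closed image yields $\overline{i(\sigma/\tau)}=\overline{i}_\tau(\overline{\sigma/\tau})=\overline{\sigma/\tau}$, completing part (ii).
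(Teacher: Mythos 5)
Your proof is correct, but it takes a genuinely different route from the paper's: where the paper outsources the content, you rebuild it from scratch. The paper's proof of (i) is a one-line citation of Rabinoff's Proposition 3.4(i), together with the remark that $\sigma/\tau$ is the image of $\sigma$ in $\Hom_{\mathbf{Mon}}(S_\sigma,\R^\infty)$; its proof of (ii) is the single sentence that $\overline{\sigma/\tau}$ consists of the strata $\sigma/\tau'$ with $\tau\preceq\tau'$. You instead derive the whole decomposition internally from Proposition \ref{prop_toricmonoid=ratpolycone}(ii): the finite locus $x^{-1}(\R_{\geq0})$ of a point $x\in\sigmabar$ is the complement of a prime ideal of $S_\sigma$, hence equals $\tau^\perp\cap S_\sigma$ for a unique face $\tau\preceq\sigma$, and this one observation drives well-definedness, injectivity (via the fact that the dual face spans $\tau^\perp$), surjectivity, disjointness of the strata, and local closedness. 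Moreover, your part (ii) actually proves what the paper's remark only asserts: that the union $\bigsqcup_{\tau\preceq\tau'}i(\sigma/\tau')$ is the topological closure of $i(\sigma/\tau)$. You get the inclusion $\overline{i(\sigma/\tau)}\subseteq \overline{i}_\tau\big(\overline{\sigma/\tau}\big)$ from compactness of $\overline{\sigma/\tau}$ (together with Hausdorffness of $\sigmabar$, which you use tacitly but which is immediate from the embedding into a product of copies of $[0,\infty]$), and the reverse inclusion from the explicit density computation with $v_t=v+tw$ for $w$ in the relative interior of $\tau$, which is exactly the right argument. What the paper's approach buys is brevity and alignment with the sources (Payne, Rabinoff, ACP) whose topology and conventions it adopts wholesale; what yours buys is a self-contained proof that relies only on the paper's own Proposition \ref{prop_toricmonoid=ratpolycone} and the identification $S_{\sigma/\tau}=\tau^\perp\cap S_\sigma$, and that makes explicit the topological content (compactness of extended cones, density of $\sigma$ in $\sigmabar$) that the paper leaves black-boxed.
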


\begin{proof}
Part (i) follows immediately from \cite{Rabinoff}*{Proposition 3.4 (i)} by noticing that $\sigma/\tau$ is nothing but the image of $\sigma$ in $\Hom_{\mathbf{Mon}}(S_\sigma,\R^\infty)$ under the natural inclusion $\Hom_{\mathbf{Mon}}(S_\sigma,\R^\infty_{\geq 0})\hookrightarrow \Hom_{\mathbf{Mon}}(S_\sigma,\R^\infty)$. For part (ii) notice $\overline{\sigma/\tau}$ consists of the strata $\sigma/\tau'$ of $\sigmabar$ for which $\tau\preceq \tau'$.
\end{proof}

We refer to $\overline{i}_\tau(\overline{\sigma/\tau})\subseteq\sigmabar$ as the \emph{maximal face at infinity} of $\sigmabar$ associated to $\tau\preceq\sigma$. A \emph{morphism of extended cones} is given by a continuous map of topological spaces, $\overline f:\sigmabar\longrightarrow\sigmabar'$ so that the restriction map $f:=\overline f|_{\sigma}$ is a cone morphism into some maximal face at infinity of $\sigmabar'$. The category of extended rational polyhedral cones will be denoted by $\mathbf{RPC}^\infty$.

A morphism $\overline f:\sigmabar\longrightarrow\sigmabar'$ of extended cones is completely determined by the data of a pair $(\tau', f:\sigma\longrightarrow \sigma'/\tau')$ where $\tau'$ is a face of $\sigma'$ and $f$ is a morphism of cones: $i(\overline{\sigma'/\tau'})$ is the maximal face of $\sigma'$ containing the image of $\overline f$, the map $f:\sigma\longrightarrow \sigma'/\tau'$ is the underlying map of cones extending uniquely to $\overline f$ by continuity, and $\overline f$ is given by the composition
\[
\begin{tikzcd}
\sigmabar\arrow[rr,"\overline f"]\arrow[dr,"f"']   &   &   \sigmabar' \\
                    & \econe{\sigma'/\tau'} \arrow[ur,hook,"i_{\tau'}"']. & 
\end{tikzcd}
\] 
 The inclusion map $i_{\tau'}$ identifying the extended cone $\econe{\sigma'/\tau'}$ with a subset of $\Hom_{\mathbf{Mon}^\infty}(\pmon{S_{\sigma'}},\R^\infty_{\geq0})$ is determined by sending an element $u\in\sigma'/\tau'$ to the homomorphism
\begin{equation*}
p\longmapsto\left\{
  \begin{array}{l l}
   \langle u,p\rangle & \quad \text{if } p\in (\tau'^{\perp}\cap \sigma'^\vee) \cap S_{\sigma'}\\
   \infty & \quad \text{else}\\
  \end{array} \right .
\end{equation*}
and extending by continuity (see \cite[Proposition~3.4]{Rabinoff} or \cite[Section~3]{Payne}). When $\overline f=(0,f:\sigma\longrightarrow \sigma')$ then $\overline f$ is the canonical extension of the morphism of cones $f$ by continuity; we call such a morphism of extended cones \emph{toric}.

Given morphisms of extended cones $\overline f:\sigmabar_1\longrightarrow\sigmabar_2$ and $\overline g:\sigmabar_2\longrightarrow\sigmabar_3$, we now describe their composition. The morphisms $\overline f$ and $\overline g$ are determined respectively by pairs $(\tau, f:\sigma_1\to\sigma_2/\tau)$ and $(\upsilon, g:\sigma_2\to\sigma_3/\upsilon)$ with corresponding quotient maps for the cone quotients $\phi_{\tau}:\sigma_2\to\sigma_2/\tau$ and $\phi_{\upsilon}:\sigma_3\to\sigma_3/\upsilon$. 

\begin{lemma}\label{conefactoring}
Let $\sigma\longrightarrow \sigma'$ be a morphism of cones and $\tau\preceq \sigma$. Then $f$ factors through $\phi_\tau:\sigma\longrightarrow\sigma/\tau$ if and only if $f(\tau)=0$. When such a factorization exists, it is unique.
\end{lemma}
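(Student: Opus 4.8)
The plan is to reduce the statement about cone morphisms to the universal property of the quotient lattice $N(\tau)=N/N_\tau$. Recall that a morphism $f\colon\sigma\to\sigma'$ is precisely the datum of a lattice homomorphism $\varphi\colon N\to N'$ with $\varphi_\R(\sigma)\subseteq\sigma'$, and that $\phi_\tau$ is induced by the quotient map $[\,\cdot\,]\colon N\to N/N_\tau$. Thus a factorization $f=\bar f\circ\phi_\tau$ in $\mathbf{RPC}$ amounts to a lattice homomorphism $\psi\colon N/N_\tau\to N'$ satisfying $\psi\circ[\,\cdot\,]=\varphi$, together with the cone condition $\psi_\R(\sigma/\tau)\subseteq\sigma'$. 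The whole argument then splits into translating the geometric hypothesis $f(\tau)=0$ into the integral condition $N_\tau\subseteq\ker\varphi$ and invoking the universal property.

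I would first dispatch the ``only if'' direction directly: if such a $\bar f$ exists, then since $\phi_\tau(\tau)=[\tau]=0$ (as $\tau$ lies in the real span of $N_\tau$), we obtain $f(\tau)=\bar f(\phi_\tau(\tau))=\bar f(0)=0$. For the ``if'' direction, the bridge between the hypothesis $f(\tau)=0$ and the algebraic condition needed to apply the universal property is the rationality of $\tau$: since $\tau$ is a rational polyhedral cone, it equals the nonnegative span $\Cone(\tau\cap N)$ of its lattice points, and these same lattice points generate $N_\tau$. Consequently $\varphi_\R(\tau)=0$ holds if and only if $\varphi$ vanishes on every element of $\tau\cap N$, i.e. if and only if $N_\tau\subseteq\ker\varphi$. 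Assuming $f(\tau)=0$, the universal property of the quotient of abelian groups produces a unique lattice homomorphism $\psi\colon N/N_\tau\to N'$ with $\psi\circ[\,\cdot\,]=\varphi$.

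It then remains to check that $\psi$ actually defines a morphism of cones, but this is automatic: since $\sigma/\tau=[\sigma]$ by definition of the cone quotient, we have $\psi_\R(\sigma/\tau)=\psi_\R([\sigma])=\varphi_\R(\sigma)\subseteq\sigma'$, so the required image condition is inherited from $f$. Uniqueness of the factorization in $\mathbf{RPC}$ follows immediately from the surjectivity of $[\,\cdot\,]$, hence of $\phi_\tau$: any two cone morphisms $\sigma/\tau\to\sigma'$ extending $\varphi$ along $\phi_\tau$ agree on the image of $\phi_\tau$, which is all of $\sigma/\tau$, and thus coincide.

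The step I expect to be the genuine crux — as opposed to formal bookkeeping — is the translation between the real condition $f(\tau)=\varphi_\R(\tau)=0$ and the integral condition $N_\tau\subseteq\ker\varphi$. This is exactly where I must use that $\tau$ is \emph{rational}, so that $\tau$ is the cone spanned by its lattice points and $\tau\subseteq(N_\tau)_\R=N_\tau\otimes\R$; without this one could only conclude that $\varphi$ kills the real cone but not that it kills the integral sublattice $N_\tau$ on the nose. Once this equivalence is established, both the existence and the uniqueness of $\bar f$ are formal consequences of the universal property of the lattice quotient.
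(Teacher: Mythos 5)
Your proof is correct and takes essentially the same route the paper intends: the paper's entire proof of this lemma is the one-line remark that it ``follows immediately from the properties of quotients of vector spaces,'' and your argument is exactly that universal-property argument written out in detail, carried out (correctly) at the level of lattices $N\to N/N_\tau$ rather than merely of real vector spaces. No gaps; nothing further is needed.
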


\begin{proof}
This follows immediately from the properties of quotients of vector spaces.
\end{proof}

By Lemma~\ref{conefactoring}, there exists a unique morphism of cones $h:\sigma_2/\tau\to\sigma_3/\omega$ fitting into the commutative diagram 
\[
\begin{tikzcd}
&\sigma_2\arrow[r,"g"]\arrow[d,"\phi_{\tau_2}"]& \sigma_3/\tau_3\arrow[d]\\
\sigma_1\arrow[r,"f"]  & \sigma_2/\tau_2\arrow[r,dashed,"h"] & \sigma_3/\omega
\end{tikzcd}
\]
if and only if $\omega$ is a face of $\sigma_3$ containing both $\upsilon$ and $g(\tau)$. Continuity of $\overline g$ ensures that the composition $\overline g \circ \overline f:=(\omega,h \circ f:\sigma_1\to\sigma_3/\omega)$ lands in the cone $\sigma_3/\omega$ for $\omega$ the \emph{smallest} face of $\sigma_3$ containing both $\upsilon$ and $g(\tau)$.  This is made precise in the following lemma. 

\begin{lemma}
Let $\overline f=(0,\sigma\longrightarrow\sigma')$ be a toric morphism of extended cones and let $\sigma/\tau$ be the maximal face of $\sigmabar$ at infinity corresponding to $\tau\preceq \sigma$. Then $\overline f(\sigma/\tau)\subseteq \sigma'/\omega$ where $\omega$ is the smallest face of $\sigma'$ containing $f(\tau)$. 
\end{lemma}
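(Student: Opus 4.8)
The plan is to work dually, using the identifications $\sigmabar=\Hom_{\mathbf{Mon}}(S_\sigma,\R^\infty_{\geq0})$ and $\sigmabar'=\Hom_{\mathbf{Mon}}(S_{\sigma'},\R^\infty_{\geq0})$ from Section~\ref{section_extendedcones}. Under these, a toric morphism $\overline f=(0,f\colon\sigma\to\sigma')$ is the map dual to the monoid homomorphism $f^\#\colon S_{\sigma'}\to S_\sigma$ obtained by restricting the transpose $f^t\colon M'\to M$ of $f$; concretely $\overline f(\psi)=\psi\circ f^\#$ for $\psi\in\sigmabar$, and one checks $f^t(S_{\sigma'})\subseteq S_\sigma$ because $f(\sigma)\subseteq\sigma'$. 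First I would fix a representative $v\in\sigma$ of a class in $\sigma/\tau$ and compute $\overline f(i_\tau(v))$ directly from the formula for $i_\tau$ in Proposition~\ref{prop_extendedcones}(i): it is the homomorphism sending $u'\in S_{\sigma'}$ to $\langle f(v),u'\rangle=\langle v,f^t(u')\rangle$ when $f^\#(u')\in\tau^\perp\cap S_\sigma$, and to $\infty$ otherwise.

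The heart of the argument is to identify the locus on which this homomorphism is finite. Since every $v\in\sigma$ pairs finitely with $S_\sigma$, finiteness of $\overline f(i_\tau(v))(u')$ is equivalent to the single condition $f^\#(u')\in\tau^\perp$, i.e.\ $\langle f(v'),u'\rangle=0$ for all $v'\in\tau$, which says exactly that $u'$ vanishes on the cone $f(\tau)$. The key step is then the face-theoretic identity
\[
\big\{\, u'\in S_{\sigma'} \;\big|\; u'|_{f(\tau)}=0 \,\big\}=\omega^\perp\cap S_{\sigma'},
\]
where $\omega$ is the smallest face of $\sigma'$ containing $f(\tau)$. For this I would use that any $u'\in(\sigma')^\vee$ vanishing on $f(\tau)$ has associated face $\{v'\in\sigma'\mid\langle v',u'\rangle=0\}$ containing $f(\tau)$, hence containing $\omega$ by minimality, so that $u'\in\omega^\perp$; the reverse inclusion is immediate from $f(\tau)\subseteq\omega$. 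This is precisely where the hypothesis that $\omega$ is the \emph{smallest} such face enters: it is exactly what makes the two sets coincide, since a strictly larger face would shrink the right-hand side.

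Rephrasing via prime ideals, the computation shows $\overline f(i_\tau(v))^{-1}(\infty)=(f^\#)^{-1}(\frakp_\tau)=\frakp_\omega$, using the correspondence $\frakp_\tau=S_\sigma-\tau^\perp$ of Proposition~\ref{prop_toricmonoid=ratpolycone}(ii). By the description of the stratification in Proposition~\ref{prop_extendedcones}, a point of $\sigmabar'$ lies in the stratum $\sigma'/\omega$ precisely when its $\infty$-locus is $\frakp_\omega$; hence $\overline f(i_\tau(v))\in i_\omega(\sigma'/\omega)$. Tracking the finite values gives the sharper statement $\overline f(i_\tau(v))=i_\omega([f(v)])$, so that $\overline f$ restricts to the cone morphism $[f]\colon\sigma/\tau\to\sigma'/\omega$ induced by $f$, which is well-defined since $f(\tau)\subseteq\omega$ forces $f(N_\tau)\subseteq N'_\omega$. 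In particular $\overline f(\sigma/\tau)\subseteq\sigma'/\omega$, as claimed. I expect the only genuine obstacle to be the face identity of the second paragraph; everything else is a direct unwinding of the dual definitions.
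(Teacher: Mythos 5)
Your proof is correct, but it follows a genuinely different route from the paper's. The paper argues topologically, by contradiction: it notes that $\overline f(\sigma/\tau)$ must land in some stratum $\sigma'/\omega$ of $\sigmabar'$, supposes $\omega$ is not minimal among faces containing $f(\tau)$, so that some face $\gamma\prec\omega$ contains $f(\tau)$, then picks $p$ in the relative interior of $\tau$ and derives a contradiction with continuity of $\overline f$ along the ray $tp$ as $t\to\infty$, since the points $f(tp)$ stay in $\gamma$ while the limit would lie in $\sigma'/\omega$, which is disjoint from $\overline{\gamma}$. You instead work entirely on the dual side of the equivalence $\overline S$ of Section~\ref{section_pointedmonoid=extendedcone}: you compute the $\infty$-locus of each image point as $(f^\#)^{-1}(\frakp_\tau)=\frakp_\omega$ and invoke the face--prime-ideal dictionary of Proposition~\ref{prop_toricmonoid=ratpolycone}(ii) together with the stratification of Proposition~\ref{prop_extendedcones}(i). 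This buys you three things. First, you never need to assume that $\overline f(\sigma/\tau)$ lies in a single stratum---the paper's opening sentence takes this for granted, whereas your pointwise computation shows the $\infty$-locus is the same prime ideal $\frakp_\omega$ for every representative $v$. Second, the role of minimality of $\omega$ is isolated in one transparent identity, $\bigl\{u'\in S_{\sigma'} \,:\, u'|_{f(\tau)}=0\bigr\}=\omega^\perp\cap S_{\sigma'}$, rather than hidden inside a limit argument; relatedly, the paper's contradiction step tacitly assumes $f(\tau)\subseteq\omega$ in order to produce the intermediate face $\gamma$, a case distinction your computation never has to make. Third, you obtain the sharper conclusion $\overline f\circ i_\tau=i_\omega\circ[f]$, i.e.\ that $\overline f$ restricts on this stratum to the induced quotient morphism $\sigma/\tau\to\sigma'/\omega$, which is more than the stated containment and is exactly what makes the composition rule $\overline g\circ\overline f=(\omega, h\circ f)$ discussed just before the lemma consistent with set-theoretic composition. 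What the paper's proof buys in exchange is brevity and geometric immediacy, staying in the topological picture of $\sigmabar$ without unwinding the monoid duality.
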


\begin{proof}
The image of $\sigma/\tau$ through $\overline f$ must land in some maximal face of $\sigma'$ at infinity. Denote this face at infinity by $\sigma'/\omega$ and assume for contradiction that $\omega$ is not the smallest face of $\sigma'$ containing $f(\tau)$. Then there is some intermediary face $\gamma\preceq\sigma'$ with $\gamma\prec\omega$ and $f(\tau)$ a subcone of $\gamma$. We then have that $\sigma'/\gamma$ is a maximal face of $\sigma'/\omega$ at infinity, with $f(\tau)\subset \gamma$ but $\overline f(\sigma/\tau) \subset \sigma'/\omega$. Let $p\in \tau$ be a vector in the maximal face of $\tau$ and consider the limit point $\displaystyle \overline p=\lim_{t\to \infty} tp$. In the topology of $\sigma$, we must have $\overline p\in \sigma/\tau$. Notice, however, that $f(tp)\in\gamma$ for any $t\in\R_{\geq0}$, but $\overline f(\overline p)\in\sigma'/\omega$, contradicting the continuity of $\overline f$.
\end{proof}

\subsection{Pointed monoids and extended cones} \label{section_pointedmonoid=extendedcone}
In this section we are going to show that (the opposite of) the category of pointed toric monoids is equivalent to the category of extended rational polyhedral cones. 

\begin{theorem}
There is a natural contravariant equivalence of categories
\begin{equation*}\begin{split}
\overline{S}:\mathbf{Mon}_{toric}^\infty & \xlongrightarrow{\sim} \mathbf{RPC}^\infty \\ 
P^\infty&\longmapsto \sigmabar_P=\Hom_{\mathbf{Mon}}(P,\R^\infty_{\geq 0}).
\end{split}\end{equation*}
\end{theorem}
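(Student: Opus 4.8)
The plan is to construct $\overline{S}$ on objects exactly as in the statement and on morphisms by \emph{precomposition}, then verify that this is a contravariant functor which is fully faithful and essentially surjective. On objects, $\overline{S}(\pmon{P})=\sigmabar_P=\Hom_{\mathbf{Mon}}(P,\R^\infty_{\geq0})$ is by construction the canonical compactification of the cone $\sigma_P=\Hom_{\mathbf{Mon}}(P,\R_{\geq0})$, which is a rational polyhedral cone by Proposition~\ref{prop_toricmonoid=ratpolycone}(i); hence $\sigmabar_P$ is an object of $\mathbf{RPC}^\infty$. Essential surjectivity is then immediate: any extended cone $\sigmabar$ is the compactification of its underlying cone $\sigma=S(S_\sigma)$, so $\sigmabar=\overline{S}(\pmon{(S_\sigma)})$, using essential surjectivity of the unpointed equivalence $S$.

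For the action on morphisms, given $\phi\colon\pmon{P}\to\pmon{Q}$ I set $\overline{S}(\phi)=\phi^\ast\colon\sigmabar_Q\to\sigmabar_P$, $\chi\mapsto\chi\circ\phi$, using the identification $\Hom_{\mathbf{Mon}}(Q,\R^\infty_{\geq0})=\Hom_{\mathbf{Mon}^\infty}(\pmon{Q},\R^\infty_{\geq0})$ of Section~\ref{section_pointedmonoids}. The real content is that $\phi^\ast$ is a morphism of extended cones. Continuity is automatic, since the topology on $\Hom_{\mathbf{Mon}}(\cdot,\R^\infty_{\geq0})$ is that of pointwise convergence and evaluation of $\chi\circ\phi$ at $p$ equals evaluation of $\chi$ at $\phi(p)$. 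To identify the underlying cone morphism and its target face at infinity, I use the unique factorization of $\phi$ from Section~\ref{section_pointedmonoids} into a Rees quotient $\pmon{P}\to\pmon{P}/\frakp$ followed by a toric morphism, where $\frakp=\phi^{-1}(\infty_Q)$. By Proposition~\ref{prop_toricmonoid=ratpolycone}(ii) the prime ideal $\frakp$ corresponds to a face $\tau\preceq\sigma_P$, and the computation $P-\frakp_\tau=S_\sigma\cap\tau^\perp=S_{\sigma/\tau}$ identifies $\pmon{P}/\frakp$ with $\pmon{(S_{\sigma/\tau})}$. For $\chi\in\sigma_Q$ the homomorphism $\chi\circ\phi$ takes the value $\infty$ exactly on $\frakp$ and is finite on $\tau^\perp\cap S_\sigma$, so by the explicit formula of Proposition~\ref{prop_extendedcones}(i) it lies in the stratum $i_\tau(\sigma/\tau)$; the restriction of $\chi\mapsto\chi\circ\phi$ to $\sigma_Q$ is thus the cone morphism $f\colon\sigma_Q\to\sigma_P/\tau$ obtained from the toric part of $\phi$ via Proposition~\ref{prop_toricmonoid=ratpolycone}(i). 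Hence $\phi^\ast$ is the morphism of extended cones determined by the pair $(\tau,f)$ in the sense of Section~\ref{section_extendedcones}. Functoriality of $\overline{S}$ is then free, since $(\psi\circ\phi)^\ast=\phi^\ast\circ\psi^\ast$ and $\id^\ast=\id$ as maps of spaces, and composition in $\mathbf{RPC}^\infty$ is composition of the underlying continuous maps.

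It remains to prove full faithfulness, i.e.\ that $\phi\mapsto\phi^\ast$ is a bijection. I would do this by showing both Hom-sets are naturally parametrized by the \emph{same} data, namely pairs $(\tau,f)$ with $\tau\preceq\sigma_P$ a face and $f\colon\sigma_Q\to\sigma_P/\tau$ a cone morphism. On the monoid side, the unique factorization sends $\phi$ to $\frakp=\phi^{-1}(\infty_Q)$ together with its toric part; Proposition~\ref{prop_toricmonoid=ratpolycone}(ii) turns $\frakp$ into $\tau$ and Proposition~\ref{prop_toricmonoid=ratpolycone}(i) turns the toric morphism $\pmon{(S_{\sigma/\tau})}\to\pmon{Q}$ into $f$, and this assignment is bijective with inverse reconstructing $\phi$ as the pointification of the toric part composed with the Rees quotient $\pmon{P}\to\pmon{P}/\frakp_\tau$. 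On the extended cone side, the discussion in Section~\ref{section_extendedcones} shows that a morphism $\sigmabar_Q\to\sigmabar_P$ is determined by exactly such a pair $(\tau,f)$. Since the previous paragraph shows that $\overline{S}$ realizes the first parametrization as the second, it is a bijection on Hom-sets, which would complete the proof.

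The main obstacle is the bookkeeping in the morphism case: one must check that the face $\tau$ and cone morphism $f$ extracted from the factorization of $\phi$ agree, through the stratification of Proposition~\ref{prop_extendedcones}, with the pair defining $\phi^\ast$ as an $\mathbf{RPC}^\infty$-morphism. A pleasant feature of the precomposition definition is that, once this identification is in place, functoriality and the Hom-set bijection follow formally; in particular one does \emph{not} need to reconcile by hand the pushout composition law of Lemma~\ref{pushoutlemma} with the ``smallest face'' composition of extended cones, as both are subsumed by composition of the underlying maps.
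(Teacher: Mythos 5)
Your proposal is correct and follows essentially the same route as the paper: define $\overline{S}$ by precomposition, use the unique factorization of a pointed morphism into a Rees quotient followed by a toric morphism, identify the image of the Rees quotient with the inclusion $i_\tau$ of the face at infinity (the paper isolates this as Lemma~\ref{imageofreese}), and conclude via the classical equivalence of Proposition~\ref{prop_toricmonoid=ratpolycone}. The only difference is organizational --- you package fullness and faithfulness together as a matching of the parametrizations of both Hom-sets by pairs $(\tau,f)$, whereas the paper proves fullness (constructing $h^\vee\circ r$) and faithfulness (uniqueness of factorization) separately --- but the underlying content is identical.
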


\begin{proof}
Consider the association \[\overline S:\mathbf{Mon}_{toric}^{\infty,op}\longrightarrow\mathbf{RPC}^\infty\] that takes a pointed monoid $\pmon{P}$ to the extended cone 
\[  \overline S(\pmon{P})=\econe{\sigma}_P:=\Hom_{\mathbf{Mon}^\infty}(\pmon{P},\R^\infty_{\geq0}),\]
 and a morphism $f:\pmon{P}\longrightarrow\pmon{Q}$ to the morphism defined by pre-composition: 
\begin{align*} 
\overline S(f):&\ \econe{\sigma}_Q\longrightarrow\econe{\sigma}_P\\ 
&\ \ \alpha\longmapsto\alpha\circ f.
\end{align*}

Pre-composition is functorial; thus we are left to check that these constructions indeed send a morphism of pointed monoids to morphisms in $\mathbf{RPC}^\infty$ respectively. A morphism $f:\pmon{P}\longrightarrow\pmon{Q}$ factors uniquely as 
\[
\begin{tikzcd}
\pmon{P}\arrow[rr,"f"]\arrow[dr,"g"']   &   &   \pmon{Q}\\
                    & \pmon{P}/f^{-1}(\infty_Q)\arrow[ur,"h"']. & 
\end{tikzcd}
\]
Since $\overline{S}$ restricts classically to a contravariant equivalence between $\mathbf{Mon}_{toric}$ and $\mathbf{RPC}$, it sends toric morphisms to toric morphisms. By Proposition~\ref{prop_toricmonoid=ratpolycone} (ii) we know $f^{-1}(\infty_Q)=\frakp_\tau$ for $\tau={\rm Cone}(P^\circ - f^{-1}(\infty_Q))^\perp \cap \sigma_P$, so the Theorem follows from the two following Lemmas.
\end{proof}

\begin{lemma}\label{imageofreese}
Let $\tau\preccurlyeq \sigma_P$ be a face and set $r:\pmon{P}\longrightarrow \pmon{P}/\frakp_\tau$ to be the Rees quotient by $\frakp_\tau$. Then $\overline S(r)=i_\tau$, i.e. $\overline S$ sends $r$ to the canonical inclusion of the extended face at infinity $\econe{\sigma/\tau}$ in $\sigmabar_P$. 
\end{lemma}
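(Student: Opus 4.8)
The plan is to unwind both sides of the claimed equality and to check that they agree on the dense interior cone $\sigma/\tau$, after which equality of the two (continuous) maps of extended cones is automatic. The one step that carries genuine content — and the place where I would be most careful — is the identification of the \emph{source} of $\overline S(r)$. Since $\frakp_\tau = S_\sigma - \tau^\perp$, the adjoined absorbing element aside, the underlying monoid of the Rees quotient $\pmon{P}/\frakp_\tau$ is $\pmon{P}-\frakp_\tau = S_\sigma\cap\tau^\perp$. The standard toric duality computation $(\sigma/\tau)^\vee = \sigma^\vee\cap\tau^\perp$ together with $M(\tau)=\tau^\perp\cap M$ identifies this monoid with $S_{\sigma/\tau}$, so that $\overline S(\pmon{P}/\frakp_\tau)=\Hom_{\mathbf{Mon}}(S_{\sigma/\tau},\R^\infty_{\geq0})=\econe{\sigma/\tau}$; this is exactly the extended face at infinity appearing in the statement, and $\overline S(r)$ is a priori a morphism $\econe{\sigma/\tau}\to\sigmabar_P$, with the correct source and target.

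Next I would record the Rees quotient map $r$ concretely: it restricts to the identity on $S_\sigma\cap\tau^\perp = \pmon{P}-\frakp_\tau$ and sends every element of $\frakp_\tau$, as well as the absorbing element $\infty_P$, to $\infty$. I would then evaluate $\overline S(r)$ on the interior $\sigma/\tau\subseteq\econe{\sigma/\tau}$. A point $\bar v\in\sigma/\tau$ corresponds to a finite-valued homomorphism $\alpha\in\Hom_{\mathbf{Mon}}(S_{\sigma/\tau},\R_{\geq0})$ given by $\alpha(u)=\langle v,u\rangle$ for any lift $v\in\sigma$ of $\bar v$ (well defined since $u\in\tau^\perp=N_\tau^\perp$). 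Precomposition with $r$ then yields, for $u\in S_\sigma$,
\[
(\alpha\circ r)(u)=
\begin{cases}
\langle v,u\rangle & \text{if } u\in\tau^\perp\cap S_\sigma,\\
\infty & \text{else},
\end{cases}
\]
which is precisely the formula defining $i_\tau(\bar v)$ in Proposition~\ref{prop_extendedcones}(i). Hence $\overline S(r)$ and $i_\tau$ agree on $\sigma/\tau$.

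Finally, I would invoke the fact that a morphism of extended cones is completely determined by its restriction to the interior cone — it is the unique continuous extension of its toric part into the relevant maximal face at infinity, by the very definition of $\mathbf{RPC}^\infty$. Since $\sigma/\tau$ is dense in $\econe{\sigma/\tau}$ and $\overline S(r)$ agrees there with $i_\tau$, this forces $\overline S(r)=\overline{i}_\tau$ on all of $\econe{\sigma/\tau}$, as claimed. The main obstacle is thus not the computation, which is a direct unwinding of definitions, but the bookkeeping in the first step: correctly matching the underlying monoid of the Rees quotient $\pmon{P}/\frakp_\tau$ with the dual monoid $S_{\sigma/\tau}$ of the quotient cone, which is what legitimizes comparing $\overline S(r)$ with the inclusion $\overline{i}_\tau$ in the first place.
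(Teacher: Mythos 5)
Your proof is correct, but it reaches the conclusion by a different mechanism than the paper. The paper never separates out the identification of the source; instead it works pointwise on the whole extended face: it observes that $\overline S(r)\colon \alpha\mapsto \alpha\circ r$ is injective, that every $\alpha$ on $\pmon{P}/\frakp_\tau$ must send $\frakp_\tau$ to $\infty$ so the image lies in $\econe{\sigma/\tau}$, and then proves surjectivity onto $\econe{\sigma/\tau}$ by writing down, for an arbitrary point $c$ of the extended face (including points at infinity), an explicit preimage via the defining formula of $i_\tau$. Your route instead makes explicit what the paper leaves implicit --- that primeness of $\frakp_\tau$ gives $\pmon{P}/\frakp_\tau \cong (S_\sigma\cap\tau^\perp)^\infty = (S_{\sigma/\tau})^\infty$, hence $\overline S(\pmon{P}/\frakp_\tau)=\econe{\sigma/\tau}$ on the nose --- and then checks agreement with $i_\tau$ only on the dense open stratum $\sigma/\tau$, finishing by continuity and Hausdorffness of $\sigmabar_P$. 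What your approach buys is a cleaner computation (only finite-valued homomorphisms need to be evaluated) and a useful clarification of the source; what the paper's approach buys is independence from any topological input: it needs no appeal to density, to Hausdorffness, or to continuity of precomposition in the pointwise-convergence topology. That last point deserves one explicit sentence in your write-up: since this lemma is exactly what establishes (in the proof of the equivalence) that $\overline S(r)$ is a morphism in $\mathbf{RPC}^\infty$, you may not presuppose that; what you actually use, and should state, is only that precomposition by $r$ is continuous as a map of Hom-spaces with the topology of pointwise convergence, after which the density argument shows it coincides with the morphism $\overline{i}_\tau$. With that caveat made explicit, your argument is complete and non-circular.
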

\begin{proof}
By definition, $\overline S(r)$ results in the morphism 
\[
\begin{tikzcd}
\Hom_{\mathbf{Mon}^\infty}({\pmon{P}/\frakp_\tau},\R^\infty_{\geq0})\arrow[r,"\overline S(r)"]  & \Hom_{\mathbf{Mon}^\infty}({\pmon{P}}\R^\infty_{\geq0})\\
\alpha\arrow[r,mapsto]&\alpha\circ r.
\end{tikzcd}
\]
It is straightforward to see this is an injective morphism of monoids. 
We show that $\overline S(r)$ has image $\overline{\sigma/\tau}$. Certainly, given any map
\[\alpha:\pmon{P}/\frakp_\tau\longrightarrow\R^\infty_{\geq0}\] this map must send elements of $\frakp_\tau$ to infinity, so the image of $\overline S(r)$ is contained in $\overline{\sigma/\tau}$. Indeed it is onto: Let \[c:\pmon{P}\longrightarrow\R^\infty_{\geq0}\] be given with $c\in\econe{\sigma/\tau}$. Then 
\[c: u\longmapsto
\begin{cases}
\langle u,p \rangle & \text{for } p\in(\sigma^\vee\cap\tau^\perp)\cap P\\
\infty & \text{otherwise}
\end{cases}
\]
by definition and the map $\pmon{P}/\frakp_\tau\longrightarrow \R^\infty_{\geq0}$ given by
\[ [x]\longmapsto
\begin{cases}
\langle x,p \rangle & \text{for } p\in(\sigma^\vee\cap\tau^\perp)\cap P\\
\infty & \text{otherwise}
\end{cases}
\]
is the preimage of $c$.
\end{proof}

\begin{lemma}
The functor $\overline S$ is full, faithful, and essentially surjective. 
\end{lemma}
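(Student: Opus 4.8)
The plan is to reduce the statement to the two factorization systems already in place, together with the classical equivalence of Proposition~\ref{prop_toricmonoid=ratpolycone}. Essential surjectivity is immediate: an arbitrary object of $\mathbf{RPC}^\infty$ is $\sigmabar$ for some rational polyhedral cone $\sigma$, and by Proposition~\ref{prop_toricmonoid=ratpolycone}(i) we may write $\sigma=\sigma_P$ for a toric monoid $P$, so that $\sigmabar=\overline S(\pmon{P})$. For full faithfulness I will exploit that a morphism $f\colon\pmon{P}\longrightarrow\pmon{Q}$ in $\mathbf{Mon}_{toric}^\infty$ factors uniquely as a Rees quotient $g\colon\pmon{P}\to\pmon{P}/f^{-1}(\infty_Q)$ followed by a toric morphism $h$, while a morphism of extended cones is determined by unique data $(\tau,\psi\colon\sigma\to\sigma'/\tau)$ consisting of a maximal face at infinity together with a toric cone map into it (Section~\ref{section_extendedcones}). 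The crux is that $\overline S$ carries the former factorization to the latter.

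Concretely, given $f=h\circ g$, write $f^{-1}(\infty_Q)=\frakp_\tau$ with $\tau={\rm Cone}(\omon{P}-f^{-1}(\infty_Q))^\perp\cap\sigma_P$ as in Proposition~\ref{prop_toricmonoid=ratpolycone}(ii). Applying the contravariant functor $\overline S$ gives $\overline S(f)=\overline S(g)\circ\overline S(h)=i_\tau\circ\overline S(h)$, where Lemma~\ref{imageofreese} identifies $\overline S(g)$ with the inclusion $i_\tau\colon\econe{\sigma/\tau}\hookrightarrow\sigmabar_P$ of the maximal face at infinity attached to $\tau$, and where $\overline S(h)$ is the toric morphism $\sigmabar_Q\to\econe{\sigma_P/\tau}$ obtained from $h$ through the classical functor of Proposition~\ref{prop_toricmonoid=ratpolycone}(i). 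Hence the canonical data of $\overline S(f)$ is exactly $(\tau,\psi)$, with $\psi\colon\sigma_Q\to\sigma_P/\tau$ the cone map underlying $\overline S(h)$, so that the monoid-side factorization of $f$ and the cone-side factorization of $\overline S(f)$ correspond term by term.

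Faithfulness and fullness then follow formally. If $\overline S(f)=\overline S(f')$, uniqueness of the cone data forces the faces at infinity to agree, hence $f^{-1}(\infty_Q)=\frakp_\tau=f'^{-1}(\infty_Q)$ and so $g=g'$, while equality of the toric parts gives $h=h'$ by the classical faithfulness in Proposition~\ref{prop_toricmonoid=ratpolycone}(i); thus $f=f'$. Conversely, given any $\overline\phi\colon\sigmabar_Q\to\sigmabar_P$ with data $(\tau,\psi)$, take the Rees quotient $g\colon\pmon{P}\to\pmon{P}/\frakp_\tau$ attached to $\frakp_\tau$, so that $\overline S(g)=i_\tau$ by Lemma~\ref{imageofreese}, together with the unique toric morphism $h\colon\pmon{P}/\frakp_\tau\to\pmon{Q}$ whose image under the classical equivalence is $\psi$; then $f:=h\circ g$ satisfies $\overline S(f)=i_\tau\circ\psi=\overline\phi$. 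The one delicate point, and the real content of the argument, is the identification in Lemma~\ref{imageofreese} that the face $\tau$ combinatorially cut out by $f^{-1}(\infty_Q)$ on the monoid side is precisely the face indexing the maximal face at infinity containing the image of $\overline S(f)$ on the cone side; this is what intertwines the two unique factorizations under $\overline S$, after which everything reduces to the already-known unpointed equivalence.
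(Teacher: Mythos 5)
Your proof is correct and follows essentially the same route as the paper: both arguments reduce to the unique factorization of a pointed morphism as a Rees quotient followed by a toric morphism, use Lemma~\ref{imageofreese} to identify $\overline S$ of the Rees quotient with the inclusion $i_\tau$ of the maximal face at infinity, and then invoke the classical unpointed equivalence of Proposition~\ref{prop_toricmonoid=ratpolycone} for the toric parts. Your write-up makes the term-by-term correspondence of the two factorization systems slightly more explicit than the paper does, but the substance is identical.
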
 
\begin{proof}
Essentially surjective is clear since given any extended cone $\sigmabar$ we have $\overline S(S_\sigma^\infty)=\sigmabar$.

Let $\pmon{P}$ and $\pmon{Q}$ be given. To show $\overline S$ is full, we will find a preimage in $\Hom_{\mathbf{Mon}^\infty}(\pmon{P},\pmon{Q})$ for a choice $\overline h\in \Hom(\sigmabar_Q,\sigmabar_P)$. By definition, the map $\overline h$ factors uniquely as
\[
\begin{tikzcd}
\sigmabar_Q\arrow[rr,"\overline h"]\arrow[dr,"h"']   &   &   \sigmabar_P \\
                    &\econe{\sigmabar_P/\tau} \arrow[ur,"i_\tau"'] & 
\end{tikzcd}
\]
for some face $\tau \preceq \sigma_P$, that is, $\overline h=(\tau, h:\sigma_Q\longrightarrow\sigma_P/\tau)$. By Lemma~\ref{imageofreese}, we know that $\iota_\tau$ is the image through $\Sigma$ of the Rees quotient morphism $r:\pmon{P}\longrightarrow \pmon{P}/\frakp_\tau$, thus we may write this diagram as
\[
\begin{tikzcd}
\Hom_{\mathbf{Mon}^\infty}(\pmon{Q},\R^\infty_{\geq0})\arrow[rr,"\overline h"]\arrow[dr,"h"']   &   &   \Hom_{\mathbf{Mon}^\infty}(\pmon{P},\R^\infty_{\geq0}) \\
                    &\Hom_{\mathbf{Mon}^\infty}(\pmon{P}/\frakp_\tau,\R^\infty_{\geq0}) \arrow[ur,"i_\tau"']. & 
\end{tikzcd}
\]
The morphism $h$ in this factorization is toric by definition and thus induces a morphism $h^\vee:\pmon{P}/\frakp_\tau\longrightarrow \pmon{Q}$. Our desired preimage is the composition

\begin{equation*}\label{dualtoh}
\begin{tikzcd}
\pmon{P}\arrow[rr]\arrow[dr,"r"']   &   &   \pmon{Q}\\
                    & \pmon{P}/\frakp_\tau\arrow[ur,"h^\vee"']. & 
\end{tikzcd}
\end{equation*}

To show faithful, let $f,f'\in \Hom(\pmon{P},\pmon{Q})$ with $\overline S(f)=\overline S(f')$. They both admit unique factorizations
\[
\begin{tikzcd}
\pmon{P}\arrow[rr,"f"] \arrow[dr,"r"']  &   &   \pmon{Q}\\
                    & \pmon{P}/\frakp_{\tau} \arrow[ur,"f_{\tau}"'] & 
\end{tikzcd}
\qquad \qquad
\begin{tikzcd}
\pmon{P}\arrow[rr,"f'"] \arrow[dr,"r'"']  &   &   \pmon{Q}\\
                    & \pmon{P}/\frakp_{\tau'} \arrow[ur,"f'_{\tau'}"'] & 
\end{tikzcd}
\]
for some faces $\tau$ and $\tau'$ of $\sigma_P$. Since $\overline S(f)=\overline S(f')$, the images of these factorizations through $\overline S$ both result in the same diagram
\[
\begin{tikzcd}
\sigmabar_Q\arrow[rr,"\overline S(f)=\overline S(f')"]\arrow[dr,"\overline S(r)=\overline S(r')"']   &   &   \sigmabar_P \\
                    &\econe{\sigma_P/\tau}=\econe{\sigma_P/\tau'}\arrow[ur,"i_\tau=i_{\tau'}"'] & 
\end{tikzcd}
\]
Thus $\tau=\tau'$, which forces $f_\tau=f'_{\tau'}$. Since this functor is already faithful for toric morphisms, we have $f=f'$. 
\end{proof}

\section{Pointed logarithmic structures}\label{section_pointedlogstr}

Let $\underline{X}$ be a scheme. Following K. Kato (see \cite[(1.1), (1.2)]{Kato_logstr}), a \emph{pre-logarithmic structure on $\underline{X}$} is a pair $(M_X, \alpha_X)$ consisting of a sheaf of monoids $M_X$ on the \'etale site of $\underline{X}_{et}$ together with a morphism $\alpha_X\mathrel{\mathop:}M_X \longrightarrow\calO_X$. A pre-logarithmic structure $(M_X,\alpha_X)$ on $\underline{X}$ is said to be a \emph{logarithmic structure}, if $\alpha_X$ induces a natural isomorphism 
\begin{equation*}
\alpha^{-1} \calO_X^\ast\xrightarrow{\sim}\calO_X^\ast \ .
\end{equation*}
In the following we are going to refer to the triple $(\underline{X},M_X,\alpha_X)$ simply as a \emph{logarithmic scheme} and denote it by $X$. 

A \emph{sheaf of pointed monoids} is a sheaf of monoids whose values lie in the category $\mathbf{Mon}^\infty$. Note that the structure sheaf $\calO_X$ of $X$ is a sheaf of pointed monoids on $X$ with absorbing element $0$. 

\begin{definition} 
A logarithmic structure $(M_X,\alpha_X)$ on $\underline{X}$ is a \emph{pointed logarithmic structure} if $M_X$ is a sheaf of pointed monoids and $\alpha_X$ is a morphism of sheaves of pointed monoids. 
\end{definition}

A triple $(\underline{X},M_X,\alpha_X)$ consisting of a scheme $\underline{X}$ together with a pointed logarithmic structure $(M_X,\alpha_X)$ will be referred to as a {pointed logarithmic scheme}. Logarithmic schemes form a category $\mathbf{LSch}$ (see \cite{Kato_logstr}). A \emph{morphism $f\mathrel{\mathop:}X\longrightarrow Y$ of logarithmic schemes} consists of a morphism $\underline{f}\mathrel{\mathop:}\underline{X}\longrightarrow \underline{Y}$ of the underlying schemes together with a morphism  $f^\flat\mathrel{\mathop:}\underline{f}^\ast M_Y\longrightarrow M_X$ of monoid sheaves on $\underline{X}$ that makes the natural diagram 
\begin{equation*}\begin{CD}
f^\ast M_Y@>f^\flat>>M_X\\
@Vf^\ast\alpha'VV @VV\alpha V\\
f^\ast\calO_{Y}@>f^\sharp>>\calO_X\\
\end{CD}\end{equation*}
commute. If $\underline{f}=\id_{\underline{X}}$ we refer to $f^\flat$ as a \emph{morphism of logarithmic structures} on $\underline{X}$.

\begin{definition}
A morphism $f\mathrel{\mathop:}X\longrightarrow Y$ of pointed logarithmic schemes is a morphism of logarithmic schemes such that $f^\flat\mathrel{\mathop:}f^\ast M_Y\longrightarrow M_X$ is a morphism of sheaves of pointed monoids. 
\end{definition}

The category of pointed logarithmic schemes is a faithful subcategory of the category of logarithmic schemes and will be denoted by $\mathbf{LSch}^\infty$.

\subsection{Pointification}\label{section:pointification}
There is a natural \emph{pointification functor}
\begin{equation*}
[.]^{\infty}\mathrel{\mathop:} \mathbf{LSch}\longrightarrow \mathbf{LSch}^\infty
\end{equation*}
that sends a logarithmic scheme $X$ to a pointed logarithmic scheme $X^\infty$ by applying the functor $[.]^\infty$ from Section \ref{section_pointedmonoids} on local sections; i.e. for an \'etale open $U$ on $X$ we have $M^\infty(U)=M(U)^\infty=M(U)\sqcup\{\infty\}$ and $\alpha^\infty$ is given by sending $\infty$ to $0\in\calO_X$. To avoid confusion, hereafter we restrict the use of this notation to identify pointed logarithmic schemes or stacks. We will note when an underlying monoid is pointed if it is not clear from context.

We refer to a morphism $f\mathrel{\mathop:}X^\infty\longrightarrow Y^\infty$ of pointed logarithmic schemes as a \emph{sub-logarithmic morphism}. If $f=g^\infty$ for a morphism $g\mathrel{\mathop:}X\longrightarrow Y$ of the underlying unpointed logarithmic schemes, i.e. if $f^\flat(m)=\infty_Y\in M_Y$ if and only if $m=\infty_X\in M_X$, we say that $f$ is a \emph{purely logarithmic morphism}.

A pointed logarithmic scheme is said to be \emph{coherent} (or \emph{fine}, \emph{separated}) if it is of the form $X^\infty$ for an (unpointed) logarithmic scheme $X$ that is \emph{coherent} (or \emph{fine}, \emph{separated}, respectively). Fix an algebraically closed base field $k$. In the following the term \emph{pointed logarithmic scheme} will refer to a fine and saturated pointed logarithmic scheme that is locally of finite type over a base field $k$.

\subsection{Geometry of sub-logarithmic morphisms}
We say that a morphism $f\mathrel{\mathop:}X\longrightarrow Y$ of pointed logarithmic schemes is a \emph{closed logarithmic immersion} if the underlying morphism $\underline{f}\mathrel{\mathop:}\underline{X}\longrightarrow \underline{Y}$ is a closed immersion and $f^\flat$ is surjective. 

\begin{proposition}\label{prop_Reesquotientlogschemes}
Let $Y$ be a pointed logarithmic scheme and $I\subseteq M_Y$ be an ideal sheaf. Then there is a unique logarithmic scheme $\mathcal{V}_Y(I)$ together with a closed logarithmic immersion $i_I\mathrel{\mathop:}\mathcal{V}_Y(I)\hookrightarrow Y$ such that, every sub-logarithmic morphism $f\mathrel{\mathop:}X\longrightarrow Y$ with $f^\flat(I)=\infty_X$ uniquely factors as 
\begin{equation*}\begin{CD}
X@>>> \mathcal{V}_Y(I)@>i_I>>Y
\end{CD}\end{equation*}
in $\mathbf{LSch}^\infty$.
\end{proposition}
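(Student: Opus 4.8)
The plan is to realize $\mathcal{V}_Y(I)$ as the scheme-theoretic vanishing locus of $I$, endowed with the Rees-quotient pointed logarithmic structure, and to verify its universal property by treating the underlying scheme and the sheaf of monoids separately. First I would pin down the underlying scheme. Since $\alpha_X(\infty_X)=0$ and the defining square of a morphism of logarithmic schemes commutes, any sub-logarithmic $f\colon X\to Y$ with $f^\flat(I)=\infty_X$ satisfies $f^\sharp\big(\alpha_Y(I)\big)=\alpha_X\big(f^\flat(I)\big)=\alpha_X(\infty_X)=0$. Hence $\underline f$ necessarily factors through the closed subscheme $\underline V\hookrightarrow\underline Y$ cut out by the (coherent, as $Y$ is fine) ideal sheaf $\alpha_Y(I)\cdot\calO_Y$; this both forces $\underline V$ to be the underlying scheme of $\mathcal{V}_Y(I)$ and produces the unique factorization $\underline{\tilde f}\colon\underline X\to\underline V$ of underlying morphisms via the universal property of closed immersions.

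Next I would equip $\underline V$ with its logarithmic structure. On $\underline V$ we have $\alpha_Y(I)=0$, so $\alpha_Y$ descends along the sectionwise Rees quotient of Section~\ref{section_pointedmonoids} to a morphism of sheaves of pointed monoids $\overline\alpha\colon(M_Y/I)\big|_{\underline V}\to\calO_V$, $[m]\mapsto\alpha_Y(m)$, $\infty\mapsto 0$; I define $M_V$ to be the associated logarithmic structure and set $\mathcal{V}_Y(I)=(\underline V,M_V,\alpha_V)$. Because a proper monoidal ideal contains no units, the Rees quotient preserves units, so on an \'etale chart $\pmon{P}\to M_Y$ with $I$ corresponding to $I_P\subseteq\pmon{P}$, the quotient $\pmon{P}/I_P$ is again a chart and the local model is $\Spec\big(k[\pmon{P}/I_P]\big)$, the closed subscheme of $\Spec\big(k[\pmon{P}]\big)$ defined by the monomial ideal generated by $I_P$; this exhibits $\mathcal{V}_Y(I)$ as an object of $\mathbf{LSch}^\infty$. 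By construction the Rees quotient map $i_I^\flat\colon\underline{i_I}^*M_Y\to M_V$ is surjective, so $i_I\colon\mathcal{V}_Y(I)\hookrightarrow Y$ is a closed logarithmic immersion satisfying $i_I^\flat(I)=\infty$.

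To finish the universal property I would lift the underlying factorization to $\mathbf{LSch}^\infty$. As the Rees quotient is a stalkwise operation it commutes with inverse image, and since $f^\flat(I)=\infty_X$ the universal property of Rees quotients recorded in Section~\ref{section_pointedmonoids} factors $f^\flat$ through the inverse-image pre-logarithmic structure $\underline{\tilde f}^{-1}\big((M_Y/I)\big|_{\underline V}\big)\to M_X$. Because $M_X$ is a genuine logarithmic structure, this factors further through the associated logarithmic structure, which is exactly the logarithmic pullback $\underline{\tilde f}^*M_V$, yielding the required $\tilde f^\flat\colon\underline{\tilde f}^*M_V\to M_X$; compatibility with $\alpha_X$ is inherited from $f^\flat$. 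Uniqueness of $\tilde f=(\underline{\tilde f},\tilde f^\flat)$ follows since $\underline{\tilde f}$ is determined by the closed immersion and $\tilde f^\flat$ by the surjectivity of $i_I^\flat$, and uniqueness of $\mathcal{V}_Y(I)$ itself is the standard consequence of its universal property.

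The main obstacle is not combinatorial --- the Rees quotient and its universal property do all the algebraic work --- but rather the careful bookkeeping of logarithmic structures: one must confirm that the sectionwise Rees quotient of the restricted structure, after passing to the associated logarithmic structure, interacts correctly with pullback, so that the factorization of $f^\flat$ genuinely promotes to a morphism in $\mathbf{LSch}^\infty$ rather than merely a morphism of pre-logarithmic structures, and that the chart-local construction is independent of the chart and therefore glues. Both points are handled by the compatibility of associated logarithmic structures with pullback and by the uniqueness inherent in the universal property.
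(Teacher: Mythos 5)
Your proof is correct and takes essentially the same route as the paper: the underlying scheme is the closed subscheme cut out by the ideal generated by $\alpha_Y(I)$, the logarithmic structure is the sectionwise Rees quotient $M_Y/I$, and both existence and uniqueness of the factorization follow from the universal property of Rees quotients on local sections. The only differences are refinements of the paper's terser argument: you explicitly justify the scheme-level factorization via $\alpha_X(f^\flat(I))=\alpha_X(\infty_X)=0$, and you pass to the associated logarithmic structure of the pre-logarithmic structure $(M_Y/I)\vert_{\underline{V}}\to\calO_V$, a step the paper leaves implicit when it declares $M/I$ to be the logarithmic structure on the closed subscheme.
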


In analogy with Section \ref{section_pointedmonoids} above we refer to the closed logarithmic subscheme $\mathcal{V}_Y(I)$ as the \emph{Rees quotient subscheme} of $Y$ by the ideal sheaf $I$. 

\begin{proof}[Proof of Proposition \ref{prop_Reesquotientlogschemes}]
Denote by $J$ the ideal sheaf in $\calO_Y$ that is generated by the non-unit elements in $I$. The underlying scheme of $\mathcal{V}_Y(I)$ is given as $\Spec \calO_Y/J$ or, in other words, as the closed subscheme of $Y$ defined by $J$. There is a unique factorization
\begin{equation*}\begin{tikzcd}
\underline{X} \arrow[r]&\underline{\mathcal{V}_Y(I)}\arrow[r] & \underline{Y}
\end{tikzcd}\end{equation*}
on the level of the underlying schemes. The closed subscheme is endowed with the pointed logarithmic structure $M/I$ that is given by the Rees quotients $M(U)/I(U)$ on \'etale opens $U$ of $Y$. The universal property of the Rees quotient on local sections then yields the existence and uniqueness of the factorization
\begin{equation*}\begin{tikzcd}
X \arrow[r]&\mathcal{V}_Y(I)\arrow[r] & Y
\end{tikzcd}\end{equation*}
in $\mathbf{LSch}^\infty$. 
\end{proof}

\begin{corollary}\label{cor_factoringlogschemes}
Let $f\mathrel{\mathop:}X\longrightarrow Y$ be a sub-logarithmic morphism. There is a unique factorization
\begin{equation*}\begin{CD}
X @>\tilde{f}>> \mathcal{V}_Y(I) @>i_I>> Y
\end{CD}\end{equation*}
where $\tilde{f}\mathrel{\mathop:}X\longrightarrow \mathcal{V}_Y(I)$ is a purely logarithmic morphism and $i_I\mathrel{\mathop:} \mathcal{V}_Y(I) \hookrightarrow Y$ is the Rees quotient subscheme of $Y$ by the ideal sheaf $I=(f^\flat)^{-1}(\infty_X)\subseteq M_X$. 
\end{corollary}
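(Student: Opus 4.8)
The plan is to regard this corollary as the scheme-theoretic incarnation of the canonical factorization of a pointed monoid morphism through a Rees quotient established in Section~\ref{section_pointedmonoids}, and to deduce it directly from Proposition~\ref{prop_Reesquotientlogschemes}. First I would verify that $I:=(f^\flat)^{-1}(\infty_X)$ is genuinely an ideal: because $\infty_X$ is absorbing, $f^\flat(m)=\infty_X$ forces $f^\flat(m+p)=f^\flat(m)+f^\flat(p)=\infty_X$ for every local section $p$, so the preimage is closed under addition by arbitrary sections, exactly as recorded for individual pointed monoids earlier in the text. Reading this preimage through the stalk identification $(\underline f^\ast M_Y)_x\cong (M_Y)_{\underline f(x)}$ presents it as an ideal sheaf $I\subseteq M_Y$.

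By construction $f^\flat(I)=\infty_X$, so Proposition~\ref{prop_Reesquotientlogschemes} applies verbatim and yields a unique factorization $X\xrightarrow{\tilde f}\mathcal{V}_Y(I)\xrightarrow{i_I}Y$ in $\mathbf{LSch}^\infty$, with $i_I$ the closed logarithmic immersion of the Rees quotient subscheme. It remains to show that $\tilde f$ is purely logarithmic. Since the logarithmic structure on $\mathcal{V}_Y(I)$ is the Rees quotient $M_Y/I$, the comorphism $\tilde f^\flat$ is precisely the map induced by $f^\flat$ on $M_Y/I$. Working \'etale-locally on charts, where $f^\flat$ is modeled by a morphism of pointed toric monoids $g\colon P^\infty\to Q^\infty$, this induced map is exactly the toric factor $h$ in the canonical factorization $P^\infty\to P^\infty/g^{-1}(\infty)\to Q^\infty$ of Section~\ref{section_pointedmonoids}, whose defining property is $h^{-1}(\infty)=\{\infty\}$. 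Hence $(\tilde f^\flat)^{-1}(\infty_X)=\{\infty\}$, which is the assertion that $\tilde f$ is purely logarithmic.

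Uniqueness is then automatic. If $X\to\mathcal{V}_Y(I')\xrightarrow{j}Y$ is any factorization with first map purely logarithmic and $j$ the Rees quotient subscheme by an ideal sheaf $I'$, then $(j^\flat)^{-1}(\infty)=I'$, and pure logarithmicity of the first map gives $(f^\flat)^{-1}(\infty_X)=(j^\flat)^{-1}\big((\tilde f^\flat)^{-1}(\infty_X)\big)=I'$, so that $I'=I$. Uniqueness of $\tilde f$ itself is then inherited from Proposition~\ref{prop_Reesquotientlogschemes}.

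I expect the main obstacle to be purely sheaf-theoretic: checking that the stalk-wise preimage assembles into a well-defined ideal sheaf on $Y$ and that its pullback recovers $(f^\flat)^{-1}(\infty_X)$, so that the hypothesis of Proposition~\ref{prop_Reesquotientlogschemes} is literally met. Since $X$ and $Y$ are fine and saturated, this is handled by passing to \'etale charts and invoking the affine monoid factorization of Section~\ref{section_pointedmonoids}, after which the global statement follows from the gluing forced by the uniqueness above.
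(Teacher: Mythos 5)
Your proposal is correct and follows essentially the same route as the paper: observe that $I=(f^\flat)^{-1}(\infty_X)$ is an ideal sheaf, invoke Proposition~\ref{prop_Reesquotientlogschemes} for the factorization, and note that since the logarithmic structure on $\mathcal{V}_Y(I)$ is the Rees quotient identifying all of $I$ with the absorbing element, the induced map $\tilde f$ is purely logarithmic. Your additional chart-level verification and the check that any competing ideal sheaf $I'$ must equal $I$ are finer-grained than the paper's two-sentence argument but do not change the approach.
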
 

\begin{proof} 
The preimage $I=(f^\flat)^{-1}(\infty_Y)$ is an ideal sheaf in $M_X$, thus the factorization coming from Proposition \ref{prop_Reesquotientlogschemes} yields the claim. Notice that by construction the logarithmic structure on $\mathcal{V}_Y(I)$ is given by identifying all elements in $(f^\flat)^{-1}(\infty_Y)$ with $\infty_{\mathcal{V}_Y(I)}$ and therefore the morphism $\tilde{f}\colon X\rightarrow \mathcal{V}_Y(I)$ is purely logarithmic. 
\end{proof}

\subsection{Fiber products} We now analyze fiber products in the category of fine and saturated pointed logarithmic schemes. 

\begin{proposition}\label{prop_fiberproductspointedlog}
The category of fine and saturated pointed logarithmic schemes admits a fiber product.
\end{proposition}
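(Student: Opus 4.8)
The plan is to construct the fiber product in $\mathbf{LSch}^\infty$ by reducing to the already-established constructions in this paper, following the classical strategy for fiber products of fine saturated logarithmic schemes but accounting for the presence of absorbing elements. Given sub-logarithmic morphisms $f\colon X\to Z$ and $g\colon Y\to Z$, the first step is to form the fiber product $\underline{X}\times_{\underline{Z}}\underline{Y}$ of the underlying schemes, which exists in the category of fs schemes. Over this underlying scheme I would build the pointed logarithmic structure by mimicking the standard pushout construction on charts: \'etale locally, the morphisms $f$ and $g$ are modeled by morphisms of pointed toric monoids $\pmon{P}\to\pmon{Q}$ and $\pmon{P}\to\pmon{Q'}$, and the fiber product logarithmic structure should be governed by the pushout $\pmon{Q}\oplus_{\pmon{P}}\pmon{Q'}$ in $\mathbf{Mon}_{toric}^\infty$, whose existence is guaranteed by Lemma~\ref{pushoutlemma}~(1).

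The key steps, in order, are the following. First I would choose \'etale-local charts for $X$, $Y$, and $Z$ compatible with $f$ and $g$, giving the monoid data $\pmon{P}\to\pmon{Q}$ and $\pmon{P}\to\pmon{Q'}$. Second, I would take the amalgamated sum $\pmon{Q}\oplus_{\pmon{P}}\pmon{Q'}$ from Lemma~\ref{pushoutlemma}; by that lemma this is the pointification $(Q\oplus_P Q')^\infty$ of the underlying pushout, so the pointed structure is controlled entirely by the unpointed one together with the combinatorics of the absorbing locus. Third, to guarantee the result is again \emph{fine and saturated}, I would pass to the associated fs pointed logarithmic structure: on the unpointed level one takes the integralization and saturation of $Q\oplus_P Q'$ exactly as in the classical construction, and then pointifies. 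Fourth, I would check that the resulting pointed logarithmic structure glues, i.e.\ that it is independent of the chart choices up to canonical isomorphism, so that it descends to a global pointed logarithmic structure on $\underline{X}\times_{\underline{Z}}\underline{Y}$ (or a suitable modification thereof). Finally, I would verify the universal property: given any pointed logarithmic scheme $W$ with compatible sub-logarithmic morphisms to $X$ and $Y$ over $Z$, the universal property of the pushout of pointed toric monoids produces the required unique sub-logarithmic morphism $W\to X\times_Z Y$, using the factorization of Corollary~\ref{cor_factoringlogschemes} to handle the absorbing elements correctly.

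The main obstacle I anticipate is the interaction between the absorbing elements and the saturation step. In the classical fs case, the underlying scheme of the fiber product must sometimes be modified (one does not simply take $\underline{X}\times_{\underline{Z}}\underline{Y}$, because saturation of the monoid pushout can change the underlying space). Here the additional subtlety is that the preimages of $\infty$ under the various chart maps define ideal sheaves, and one must ensure that the Rees-quotient factorizations of $f$ and $g$ from Corollary~\ref{cor_factoringlogschemes} are compatible with the pushout: concretely, the prime ideals $f^{-1}(\infty)$ and $g^{-1}(\infty)$ must be tracked through the amalgamated sum so that the absorbing locus of $\pmon{Q}\oplus_{\pmon{P}}\pmon{Q'}$ is the correct one. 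Parts~(2) and (3) of Lemma~\ref{pushoutlemma}, which compute pushouts of Rees quotients, are precisely the tool to control this: by factoring each sub-logarithmic morphism into a Rees quotient followed by a purely logarithmic (toric) morphism, the fiber product decomposes into a Rees-quotient part and a purely logarithmic part, and the purely logarithmic part reduces to the known existence of fs fiber products in $\mathbf{LSch}$. The careful bookkeeping of which elements are sent to $\infty$ is where the real work lies.
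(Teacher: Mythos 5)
Your proposal is correct and in essence coincides with the paper's proof: both rest on factoring $f$ and $g$ through Rees quotients via Corollary~\ref{cor_factoringlogschemes}, computing the resulting pushouts with Lemma~\ref{pushoutlemma}~(2) and (3), and reducing the remaining purely logarithmic part to the unpointed fs fiber product, whose pointification settles that case. The only real difference is packaging: where you would build the structure \'etale-locally from chart-level pushouts $\pmon{Q}\oplus_{\pmon{P}}\pmon{Q'}$, then glue and verify the universal property by hand (which would require extra arguments you only gesture at, namely existence and compatibility of charts along \emph{sub}-logarithmic morphisms and independence of choices), the paper works globally with the ideal sheaves $I_f=(f^\flat)^{-1}(\infty)$ and $I_g=(g^\flat)^{-1}(\infty)$ and the Rees quotient subschemes $\mathcal{V}_Z(I_f)$, $\mathcal{V}_Z(I_g)$, $\mathcal{V}_Z(I_f\cup I_g)$ of Proposition~\ref{prop_Reesquotientlogschemes}, assembling a diagram of Cartesian squares whose outer corner is the fiber product of purely logarithmic morphisms, so that gluing and the universal property come for free from Proposition~\ref{prop_Reesquotientlogschemes} and pasting of Cartesian squares.
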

\begin{proof} In the case that we have morphism $X\rightarrow Z$ and $Y\rightarrow Z$ that are purely logarithmic (as defined in the previous section), then the fiber product is given by  $X^\infty\times_{Z^\infty}Y^\infty = (X\times_ZY)^\infty$ where $X\times_ZY$ denotes the fiber product in the category of fine and saturated logarithmic schemes. The concern is when $f:X\rightarrow Z$ and $g:Y\rightarrow Z$ are sub-logarithmic but not necessarily purely logarithmic. In this case, Corollary~\ref{cor_factoringlogschemes} furnishes unique factorizations.
\begin{equation*}\begin{CD}
X @>\tilde{f}>> \mathcal{V}_Z(I_f) @>i_{I_f}>> Z
\end{CD}\end{equation*}
and 
\begin{equation*}\begin{CD}
Y @>\tilde{g}>> \mathcal{V}_Z(I_g) @>i_{I_g}>> Z.
\end{CD}\end{equation*}
Denote by $(I_f\cup I_g)\subset M_Z$ the ideal sheaf on $Z$ generated by elements of $I_f$ and $I_g$ respectively. Denote by $J_f$ and $J_g$ the corresponding ideals in $\calO_Z$ generated respectively by their non-unit elements, and $(J_f+J_g)$ the sum of ideals. Note that $(J_f+J_g)$ is precisely the ideal sheaf generated in $\calO_Z$ by $(I_f\cup I_g)$.  Then the pushouts
\begin{equation*}
\begin{tikzcd}
M_Z\arrow[r]\arrow[d] & M_Z/I_g\arrow[d]\\
M_Z/I_f\arrow[r]& M_Z/(I_f\cup I_g)
\end{tikzcd}
\hspace{2cm}
\begin{tikzcd}
\calO_Z\arrow[r]\arrow[d] & \calO_Z/J_g\arrow[d]\\
\calO_Z/J_f\arrow[r]& \calO_Z/(J_f+J_g)
\end{tikzcd}
\end{equation*}
determine a Cartesian square
\begin{equation*}
\begin{tikzcd}
\mathcal{V}_Z(I_f\cup I_g)\arrow[r]\arrow[d]   & \mathcal{V}_Z(I_g) \arrow[d,"i_{I_g}"] \\
\mathcal{V}_Z(I_f)\arrow[r,"i_{I_f}"']&Z.
\end{tikzcd}
\end{equation*}
in the category of fine and saturated pointed logarithmic schemes. Taking the smallest ideal sheafs generated by $f^\flat(I_g)$ and $g^\flat(I_f)$ (as in Lemma~\ref{pushoutlemma}) in $M_X$ and $M_Y$ respectively, we can extend this to a diagram of Cartesian squares
\begin{equation*}
\begin{tikzcd}
& \mathcal{V}_Y(g^\flat(I_f)+M_Y)\arrow[r]\arrow[d]   & Y\arrow[d,"\tilde{g}"] \\
\mathcal{V}_X(f^\flat(I_g)+M_X)\arrow[r]\arrow[d]  & \mathcal{V}_Z(I_f\cup I_g)\arrow[r]\arrow[d]   & \mathcal{V}_Z(I_g) \arrow[d,"i_{I_g}"] \\
X\arrow[r,"\tilde{f}"']&\mathcal{V}_Z(I_f)\arrow[r,"i_{I_f}"']&Z.\\
\end{tikzcd}
\end{equation*}
Since both $\tilde{f}$ and $\tilde{g}$ are purely logarithmic, the top-left two arrows in this diagram are as well. Thus we may construct our desired fiber product $X\times_Z Y$ by completing this to a large Cartesian square consisting of four Cartesian squares.
\end{proof}

\begin{example}\label{curveexample} Consider the family of affine curves $\pi:C=\Spec{\k[x,y,t]/(xy-t)}\longrightarrow \Spec{\k[t]}$ with special fiber at $t=0$ the nodal singularity given by the union of two axes. The logarithmic structures determined by charts $\N^2\rightarrow \k[x,y,t]/(xy-t)$ and $\N\rightarrow \Spec{\k[t]}$ produce a family of logarithmically smooth curves via the diagonal $\Delta:\N\rightarrow \N^2$:
\[
\begin{tikzcd}
\N^2\arrow[r,"x^ny^m"]   & \k[x,y,t]/(xy-t)  \\
\N\arrow[u,"\Delta"]\arrow[r,"t^n"']&\k[t]\arrow[u].
\end{tikzcd}
\]
Restricting to the special fiber results in a node over the standard logarithmic point \[\pi_0:C_0 \longrightarrow (\Spec{\k},\k^\ast\oplus \N)\] with the characteristic monoid $\overline{M}_{C_0}$ taking the form
\begin{equation*}
\N a\oplus\N b\oplus \N\delta/ (a+b=\delta)
\end{equation*}
where $a,b$ are generators for $\N^2$ and $\delta =\pi_0^\flat(1)$. While pointing the logarithmic structures above does not immediately alter the situation as these morphisms are all logarithmic, it does allows us the option to pull back along the sub-logarithmic morphism \[(\Spec{\k},\k^\ast\oplus \{0,\infty\})\rightarrow (\Spec{\k},\k^\ast\oplus \N^\infty)\] which, at the level of monoids, is determined by the Rees quotient $\N^\infty\longrightarrow \{0,\infty\}$ sending $0\mapsto 0$ and $n\mapsto \infty$ for $n>0$. This results is a pointed affine logarithmic curve over the pointed logarithmic point $(\Spec{\k},\k^\ast\oplus \{0,\infty\})$ carying what is, in a sense, an intrinsic pointed logarithmic structure, i.e.\, the analogous Rees quotient caries through upstairs resulting in a characteristic monoid on the pullback of $C_0$ near the node where $\delta=\infty_{C_0}$ in $M_{C_0}$.
\end{example}

\subsection{Logarithmic stratifications}\label{logstrat}
Let $X$ be a fine and saturated logarithmic scheme that is logarithmically smooth over $k$. Suppose moreover for simplicity that $X$ has no self-intersection, i.e. that $M_X$ is defined in the Zariski topology on $X$. In this case there is natural stratification of $X$ by locally closed subsets that are in a one-to-one correspondence with the cones in the cone complex associated to $X$ (see \cite[Corollary 3.5]{Ulirsch_tropcomplogreg}).  

Let us quickly recall the inductive construction of this stratification: Let $X_0$ be the locus
\begin{equation*}
X_0=\big\{x\in X\big\vert M_{X,x}\simeq\calO_{X,x}^\ast\big\} \ .
\end{equation*}
Then $X_0$ is smooth and the connected components of $X_0$ are the open strata of $X$ of codimension zero. For $n\geq 1$ denote by $X_n$ the locally closed subset of regular points of $X-(X_0\cap \cdots \cap X_{n-1})$. The irreducible components of $X_n$ form the strata of $X$ of codimension $n$. 

Let $E$ be a locally closed stratum of $X$ and denote by $\overline{E}$ its closure in $X$. Let $I_{\overline{E}}$ be the preimage in $M_X$ of the vanishing ideal of $\overline{E}$ in $\calO_X$. Then $I_{\overline{E}}$ is an ideal sheaf in $M_X$ and $\overline{E}$ is naturally isomorphic to the scheme underlying the Rees quotient subscheme $\mathcal{V}_X(I_E)$. This construction naturally endows the closure $\overline{E}$ of a stratum with a logarithmic structure that makes the inclusion $\overline{E}\hookrightarrow X$ into a morphism of pointed logarithmic schemes. 

In other words, we promote the natural stratification of $\underline{X}$ in the category of scheme to a stratification of $X$ in the category of pointed logarithmic schemes.

\begin{remark}
Let $X=X(\Delta)$ be a toric variety associated to a rational polyhedral fan $\Delta$ in $N_\R=N\otimes \R$, where $N$ is the cocharacter lattice of the big torus $T$ of $X$. There is a natural stratification of $X$ by the $T$-orbits of $X$ that are in a natural one-to-one correspondence with the cones in $\Delta$. Write $O(\sigma)$ for the locally closed $T$-orbit of $X$ associated to a cone $\sigma$ in $\Delta$. Then the natural immersion $\overline{O}(\sigma)\hookrightarrow X$ of the closure $O(\sigma)$ is not toric, but rather \emph{subtoric} (i.e.\ a composition of a toric morphism, here the identity, with the closed immersion of a closed torus orbit).
\end{remark}

\section{Clutching and gluing in logarithmic geometry}\label{section_logarithmicclutching&gluing}

Let $S$ be a logarithmic scheme. In \cite{Kato_logsmoothcurves} F. Kato has defined a logarithmic curve over $S$ as a logarithmically smooth integral morphism $X\rightarrow S$ (later also assumed to be proper) such that each geometric fiber is a reduced and connected curve with at worst nodal singularities (i.e. pre-stable). In particular, in \cite{Kato_logsmoothcurves}*{Section 1} Kato gives a combinatorial characterization of the \'etale local structure of such curves. We directly generalize F. Kato's characterization to our setting (using the notation from \cite[Section 7.2]{CavalieriChanUlirschWise_tropstack}). 

\begin{definition}\label{def_logcurve}
Let $S$ be a pointed logarithmic scheme. A \emph{(pointed) logarithmic curve} is a sublogarithmic morphism $\pi\colon X\rightarrow S$ whose underlying morphism of schemes is proper and flat with each fiber a reduced and connected curve with at worst nodal singularities (i.e. pre-stable), such that every geometric point $x$ of $X$ has an \'etale neighborhood $U$ that admits a strict \'etale morphism to a logarithmic scheme $V$ over $S$ that is of one of the following three types:
\begin{enumerate}[(i)]
\item $V=\Spec \calO_S[u]$, with $M_V=\pi^\ast M_S$;
\item $V=\Spec \calO_S[u]$, with $M_V=\pi^\ast M_S\oplus \N v$ with $\pi^\flat(v)=u$; or
\item $V=\Spec \calO_S[x,y]/(xy-t)$ for some $t\in \calO_S$, and 
\begin{equation*}
M_V=\pi^\ast M_S\oplus \calO_S^\ast\alpha \oplus \calO_S^\ast\beta / (\alpha +\beta =\delta)
\end{equation*}
for some $\delta\in M_S$ and $\alpha \mapsto x$, $\beta\mapsto y$, and $\delta\mapsto t$ through their respective structure morphisms.
\end{enumerate}
\end{definition}

\noindent The main difference from the situation in \cite{Kato_logsmoothcurves} is that in the situation of Part (iii) above we now allow $\delta= \infty$. We denote by $\calMbar_{g,n}^{log}$ the category of stable logarithmic curves.

\begin{remark}
Rather than developing here a fully fledged theory of logarithmically smooth morphisms in the category of pointed logarithmic schemes, we have restricted our present study to the appropriate generalization of logarithmic curves (guided by their \'etale local structure in the unpointed setting) given above in Definition~\ref{def_logcurve}. A complete treatment of logarithmically smooth morphisms of pointed logarithmic schemes is warranted as we believe it will offer a slightly richer theory. This remains for future work. As a quick aside in this direction, note that logarithmic smoothness is a property of morphisms that is preserved under base change along logarithmic maps. This should still be the case for base change of smooth purely logarithmic morphisms along sub-logarithmic morphisms as well. Example~\ref{curveexample} carries out such a base change of a logarithmically smooth family of affine curves.
\end{remark}

As in the unpointed case, $\calMbar_{g,n}^{log}$ forms a category fibered in groupoids over $\mathbf{LSch}^\infty$ whose fiber over a pointed logarithmic scheme is the groupoid of stable logarithmic curves over $S$ of genus $g$ with $n$ marked sections: the functor $\calMbar_{g,n}^{log}\longrightarrow \mathbf{LSch}^\infty$ assigns to a family its base pointed logarithmic scheme. Morphisms in this category are the usual commutative diagrams. As the above definition is stable under strict base change, using the strict \'etale topology $\tau_{str.et.}$ it is straightforward to check that $\calMbar_{g,n}^{log}$ forms a stack over $(\mathbf{LSch}^\infty, \tau_{str.et.})$. 

While there are new objects in $\calMbar_{g,n}^{log}$ that do not appear in the unpointed case as a smoothing parameter $\delta$ may now take the value $\infty$ in $M_X$, the unique factorization of sub-logarithmic morphisms keeps this well controlled and allows us to follow Kato's original constructions closely. Indeed, the following Proposition \ref{prop_Mbarlog=Mbar} is an immediate generalization of \cite{Kato_logsmoothcurves}*{Theorem 4.5}. In \cite{Kato_logsmoothcurves}, Kato proves that the category of unpointed stable logarithmic curves is representable by $\calMbar_{g,n}$ with the induced logarithmic structure from its boundary by first identifying the subcategory of appropriate ``basic" or minimal objects. This subcategory consists of canonical logarithmically smooth curves with logarithmic structure on the base $S$ induced as the \'etale local lift of a morphism of monoids $\N^r\rightarrow \calO_S$. In the present setting, we identify the minimal objects of $\calMbar_{g,n}^{log}$ using pointifications of the the same canonical logarithmic curves, their pointed logarithmic structures induced now by $(\N^r)^\infty \rightarrow \calO_S$. This furnishes a subcategory of $\calMbar_{g,n}^{log}$ that is again representable by $\calMbar_{g,n}$, but now with the pointification of the induced logarithmic structure from its boundary. 

\begin{proposition}\label{prop_Mbarlog=Mbar}
The stack $\calMbar_{g,n}^{log}$ over $\big(\mathbf{LSch^\infty}, \tau_{str.et.}\big)$ is representable by the algebraic stack $\calMbar_{g,n}$ with pointed logarithmic structure $M:=M_{\calMbar_{g,n}}$ given by the pointification of the standard divisorial logarithmic structure induced from its boundary. 
\end{proposition}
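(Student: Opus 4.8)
The plan is to reduce the statement to Kato's original representability result for unpointed stable logarithmic curves by exploiting the unique factorization of sub-logarithmic morphisms established in Corollary~\ref{cor_factoringlogschemes}. Recall that Kato proves that the stack $\calM_{g,n}^{log}$ of unpointed stable logarithmic curves is represented by $\calMbar_{g,n}$ equipped with its divisorial logarithmic structure $M$ by first cutting out the subcategory of \emph{minimal} (or ``basic'') objects, whose bases carry logarithmic structures induced by charts $\N^r\rightarrow \calO_S$, and then showing that every object is pulled back from a minimal one in a unique way. My goal is to run the same argument in the pointed setting, with $(\N^r)^\infty\rightarrow\calO_S$ in place of $\N^r\rightarrow\calO_S$, verifying that the only new phenomenon---a node smoothing parameter $\delta$ allowed to equal $\infty$---is governed entirely by a Rees quotient.

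First I would identify the minimal objects of $\calMbar_{g,n}^{log}$. Given a pointed logarithmic curve $\pi\colon X\rightarrow S$, the \'etale local structure in Definition~\ref{def_logcurve} attaches to each node a smoothing parameter $\delta\in M_S$, now possibly $\infty$. I would define a minimal object to be one whose base characteristic monoid is generated precisely by these node parameters together with the marked-point data, i.e.\ the pointification $(\N^r)^\infty$ of the canonical minimal monoid from Kato's construction, where $r$ counts the nodes. The key observation is that a node with $\delta=\infty$ corresponds exactly to a sub-logarithmic chart valued in a Rees quotient $(\N^r)^\infty\rightarrow (\N^r/\frakp)^\infty$ collapsing the relevant generator to $\infty$; this is the precise local content of Example~\ref{curveexample}.

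Next I would establish that every object of $\calMbar_{g,n}^{log}$ is the pullback of a minimal one along a canonical sub-logarithmic morphism of bases, and that this minimal object is unique up to unique isomorphism. Here Corollary~\ref{cor_factoringlogschemes} does the essential work: any sub-logarithmic morphism $S\rightarrow S_{min}$ factors canonically as a purely logarithmic morphism followed by the inclusion of a Rees quotient subscheme $\mathcal{V}_{S_{min}}(I)$, where $I=(f^\flat)^{-1}(\infty)$. Thus the datum of which node parameters are sent to $\infty$ is captured by the choice of ideal $I$ and a purely logarithmic residual morphism; on the purely logarithmic part one invokes Kato's theorem verbatim, while the Rees quotient part is functorially pinned down by Proposition~\ref{prop_Reesquotientlogschemes}. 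Assembling these, the groupoid of pointed logarithmic curves over $S$ matches the groupoid of morphisms $S\rightarrow\calMbar_{g,n}$ into the algebraic stack equipped with the pointified divisorial logarithmic structure.

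The main obstacle I anticipate is the bookkeeping at a node in the genuinely new regime $\delta=\infty$, specifically verifying that the pointed characteristic monoid $\N\alpha\oplus\N\beta\oplus\N\delta/(\alpha+\beta=\delta)$ behaves correctly under the Rees quotient sending $\delta\mapsto\infty$: one must check that this is compatible with both marked sections, that it patches across overlapping \'etale charts to a well-defined pointed logarithmic structure, and that the associated deformation is still represented by the same point of $\calMbar_{g,n}$ (the corresponding boundary stratum) rather than by some smaller or larger stratum. Concretely, the delicate point is that setting $\delta=\infty$ must correspond geometrically to landing in the boundary divisor where the node persists, so the Rees quotient upstairs on $M_X$ and the closed immersion into the boundary of $\calMbar_{g,n}$ must be shown to agree; this is exactly the comparison between the logarithmic stratification of Section~\ref{logstrat} and the divisorial structure. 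Once this local compatibility is secured, the descent to a stack morphism over $(\mathbf{LSch}^\infty,\tau_{str.et.})$ follows formally from strictness of the charts and the fact that the construction is stable under strict base change.
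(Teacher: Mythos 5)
Your proposal takes essentially the same route as the paper: both identify the minimal objects as pointified canonical stable logarithmic curves with bases charted by $(\N^r)^\infty\rightarrow\calO_S$, verify Kato's pull-back (minimality) criterion in the pointed setting, and use Corollary~\ref{cor_factoringlogschemes} to factor the classifying sub-logarithmic morphism into a Rees quotient followed by a purely logarithmic morphism, on which Kato's unpointed theorem applies. The paper fills in the one step you leave schematic---lifting the base factorization to the total space of the curve via Lemma~\ref{pushoutlemma} and Kato's Corollary~1.16 to get a cartesian diagram---but this is exactly the ``bookkeeping at the node'' you flag, handled the way you anticipate.
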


\begin{proof} 
As absorbing elements must restrict to absorbing elements, we note that the restriction of $M$ to any family $\underline{X}\longrightarrow \underline{S}$ in $\calMbar_{g,n}$ results in the pointification of the canonical stable log.\ curve in the sense of \cite{Kato_logsmoothcurves}, and all such pointified canonical stable log.\ curves arise this way. Recall, this is the logarithmic curve $\underline{X}\longrightarrow \underline{S}$ with $M_S$ given by the \'etale local lift of a morphism of monoids $(\N^r)^\infty \rightarrow \calO_S$ where $r$ is the maximum number of nodes of any fiber, and each choice $\delta$ the corresponding generator of $\N^r$ for each node.

To extend the proof of \cite{Kato_logsmoothcurves}*{Theorem 4.5} to the pointed setting, it is enough to prove that pointified canonical stable log.\ curves satisfy the pull-back criterion for minimality (see \cite{Kato_logsmoothcurves}*{Proposition~2.3} where the word ``basic" is used, or \cite{Gillam-Minimality}) with respect to morphisms in $\mathbf{LSch}^\infty$. To this end, let $f':X'\longrightarrow S'$ be a family in $\calMbar_{g,n}^{log}$ and $f:X\longrightarrow S$ a pointified canonical stable log. curve with
\[
\begin{tikzcd}
\underline{X'}\arrow[r]\arrow[d,"\underline{f'}"'] & \underline{X}\arrow[d,,"\underline{f}"]\\
\underline{S'}\arrow[r]&\underline{S}
\end{tikzcd}
\]
a cartesian diagram of schemes. We must show there exist unique $a:S'\longrightarrow S$ and $b:X'\longrightarrow X$ lifting this to a cartesian diagram
\[
\begin{tikzcd}
X'\arrow[r,"b"]\arrow[d,"f'"'] & X\arrow[d,"f"]\\
S'\arrow[r,"a"']&S
\end{tikzcd}
\]
in the category of pointed logarithmic schemes. Following \cite{Kato_logsmoothcurves}*{Proposition~2.3}, these maps are uniquely determined \'etale locally by the image of the generators $\{e_i\}$ of $\N^r$, each corresponding to the $i$-th node with image giving the value $\delta_i$ in $M_S'$.

Indeed, \'etale locally near a node $X'$, the structure of a logarithmic curve determines the value $\delta_i$ in $M_S'$.  The logarithmic map $a$ is determined by sending $e_i$ to $\delta_i$ for $i=1\ldots r$. By Corollary~\ref{cor_factoringlogschemes}, $a$ factors uniquely as 
\[
\begin{tikzcd}
S'\arrow[r,"\tilde a"]& \mathcal{V}_S(I) \arrow[r,"i_I"]& S
\end{tikzcd}
\]
where $\tilde a$ is purely logarithmic and $i_I$ is a closed logarithmic immersion and \[I=\langle e_i|\delta_i:=a^\flat(e_i)=\infty_{S'} \rangle \simeq \N^{r-s}\] where $s$ is the maximum number of $\delta_i$ taking the value $\infty_{S'}$. These maps are determined at the level of characteristics by the composition of a Rees quotient and a toric morphism:
\[\pmon{\left(\N^r\right)}\longrightarrow\pmon{(\N^r)}/I\longrightarrow \overline{M_S'}.\]
The gluing argument of \cite{Kato_logsmoothcurves}*{Proposition~2.3} and Lemma~\ref{pushoutlemma} uniquely determine a commutative diagram
\begin{equation}\label{curvefactorization}
\begin{tikzcd}
X'\arrow[r,"\tilde b"]\arrow[d,"f'"'] & \mathcal{V}_X(I')\arrow[d]\arrow[r,"i_{I'}"]& X\arrow[d,"f"]\\
S'\arrow[r,"\tilde a"']&\mathcal{V}_S(I)\arrow[r,"i_I"]&S
\end{tikzcd}
\end{equation}
with $I'=(f^\flat(I)+M_X')$ and the right square cartesian. The left square is cartesian by \cite{Kato_logsmoothcurves}*{Corollary~1.16}, completing the proof.
\end{proof}

\begin{remark}
The commutative Diagram~\ref{curvefactorization} utilizes the unique factorization of a sub-logarithmic morphism along the base to factorize the morphism of curves in a similar manner. The logarithmic stratum of $X'\longrightarrow S'$ into which the family $X\longrightarrow S$ maps is identified combinatorially by the smoothing parameters $\delta_i$ taking the value $\infty$. 
\end{remark}

\begin{theorem}\label{thm_logclutch&glue}
The classical clutching and gluing maps induce natural sub-logarithmic clutching maps 
\begin{equation*}\begin{split}
\calMbar_{g,n+1}^{log}\times\calMbar_{g',n'+1}^{log}&\longrightarrow \calMbar_{g+g',n+n'}^{log}\\
\big([C,s_1,\ldots, s_n, \star],[C',s_1',\ldots, s'_{n'}, \bullet]\big)&\longmapsto \big[C\sqcup_{\star\sim\bullet} C', s_1,\ldots, s_n, s_1',\ldots, s_{n'}'\big] \\
\end{split}\end{equation*}
and (self-)gluing maps
\begin{equation*}\begin{split}
\calMbar_{g-1,n+2}^{log}&\longrightarrow \calMbar_{g,n}^{log} \\
\big[ C,s_1,\ldots, s_n, \star, \bullet\big] &\longmapsto \big[C/_{\star\sim\bullet},s_1,\ldots, s_n\big] \ .
\end{split}\end{equation*}
\'Etale locally around the new node $\star\sim\bullet$ the logarithmic structure (in the notation of Definition \ref{def_logcurve} (iii) above) is given by 
\begin{equation*}
M_V=\pi^\ast M_S\oplus \calO_S^\ast\alpha \oplus \calO_S^\ast\beta / (\alpha +\beta =\delta)
\end{equation*}
where $\delta=\infty_S\in M_S$. 
\end{theorem}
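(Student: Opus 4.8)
The plan is to leverage the representability of Proposition~\ref{prop_Mbarlog=Mbar}, which identifies each $\calMbar_{g,n}^{log}$ with the algebraic stack $\calMbar_{g,n}$ carrying the pointification $M^\infty$ of its divisorial boundary logarithmic structure. A sub-logarithmic morphism out of such a pointed logarithmic stack is determined by its underlying algebraic map together with a morphism $f^\flat$ of sheaves of pointed monoids, so I would take the underlying map to be Knudsen's classical clutching (resp.\ gluing) map and construct $f^\flat$ directly on characteristic monoids. The commutativity of the resulting family and the verification of the node-local form of $M_V$ then follow by pulling back the universal curve.

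The heart of the argument is the behaviour of the divisorial generators. \'Etale locally near a point $([C],[C'])$ of the source, the characteristic monoid of the divisorial structure is $\N^{r}\oplus\N^{r'}$, freely generated by the smoothing parameters of the nodes of $C$ and of $C'$; near the image in $\calMbar_{g+g',n+n'}$ it is $\N^{r}\oplus\N^{r'}\oplus\N\delta$, the extra generator $\delta$ being the smoothing parameter of the new node, i.e.\ the local equation of the boundary divisor $\Delta$ containing the entire image. Since the classical clutching map factors set-theoretically through $\Delta$, this local equation pulls back to $0\in\calO$, and there is no purely logarithmic preimage for $\delta$. Passing to pointifications, I would define $f^\flat$ to send each preexisting node-generator to its natural counterpart in the source and to send $\delta\mapsto\infty$; as $\infty$ maps to $0$ under $\alpha$, the defining square of a logarithmic morphism commutes, and because $\delta$ is sent to the absorbing element this is a morphism of pointed (not merely unpointed) monoid sheaves. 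On characteristics this $f^\flat$ is exactly the Rees quotient $\left(\N^{r+r'+1}\right)^\infty\rightarrow\left(\N^{r+r'}\right)^\infty$ by the ideal generated by $\delta$, so by Corollary~\ref{cor_factoringlogschemes} the map factors as the pointification of the classical clutching map onto the boundary stratum followed by the closed logarithmic immersion of that stratum; this is precisely the sub-logarithmic structure asserted.

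To extract the claimed local model at the new node I would pull back the universal curve. Near $\star$ and $\bullet$ the two families carry the type~(ii) structure of Definition~\ref{def_logcurve}, with monoidal generators $v_\star,v_\bullet$ mapping to local coordinates $u,w$ under their structure maps; the scheme-theoretic gluing replaces a neighbourhood of the identified sections by $\Spec\calO_S[x,y]/(xy)$ with $x=u$, $y=w$. Setting $\alpha=v_\star$ and $\beta=v_\bullet$ then produces the type~(iii) chart, in which the relation $\alpha+\beta=\delta$ holds with $\delta=\infty_S$: this is forced by $f^\flat(\delta)=\infty$ above and is consistent with $\alpha_V(\alpha+\beta)=xy=0$. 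This verifies the displayed form of $M_V$.

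The main obstacle is promoting this assignment from the representing object to a genuine morphism of stacks, i.e.\ checking functoriality in families. Concretely one must verify that the local type~(iii) charts glue to a coherent pointed logarithmic structure satisfying Definition~\ref{def_logcurve}, that the construction commutes with strict base change (so that it respects the strict \'etale topology defining the stacks), and that the output family is again stable. The gluing of charts and the base-change compatibility are handled by the pushout computations of Lemma~\ref{pushoutlemma} together with the minimality and gluing argument used in the proof of Proposition~\ref{prop_Mbarlog=Mbar}. The self-gluing map carries the additional subtlety that the involution exchanging the two branches of the new node forces a symmetric treatment of $\star$ and $\bullet$; here I would build the \'etale-local node model $\calO_S[x,y]/(xy)$ equivariantly for this involution before descending, so that the resulting pointed logarithmic structure with $\delta=\infty$ is well defined on the quotient.
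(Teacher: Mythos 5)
Your proposal is correct, but it runs in the opposite direction from the paper's own argument, which is purely moduli-theoretic and very short: the paper reads the two displayed formulas as operations on \emph{families} (glue the underlying curves along the marked sections and equip the new node with the type~(iii) structure of Definition~\ref{def_logcurve} in which $\delta=\infty_S$), observes that this already defines a functor between the fibered categories over $\mathbf{LSch}^\infty$ whose restriction to the minimal (``small'', in the sense of F.~Kato) objects reproduces Knudsen's classical maps of algebraic stacks, and then invokes Proposition~\ref{prop_Mbarlog=Mbar} to convert that functor into the asserted sub-logarithmic morphism of representing stacks. You instead start from Knudsen's algebraic maps and build the enhancement $f^\flat$ by hand, using the deformation-theoretic local structure of the boundary (characteristic monoids $\N^{r}\oplus\N^{r'}$ at the source and $\N^{r+r'+1}$ at the image, with the clutching map locally the inclusion of the locus $\{\delta=0\}$), sending old smoothing parameters to old smoothing parameters and $\delta\mapsto\infty$, and you recover the family-level description afterwards by pulling back the universal curve. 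Both arguments hinge on Proposition~\ref{prop_Mbarlog=Mbar}, but they use it for opposite translations (functor $\Rightarrow$ morphism in the paper, morphism $\Rightarrow$ functor for you). What your route buys is explicitness: identifying $f^\flat$ on characteristics with the Rees quotient by the ideal generated by $\delta$ and hence, via Corollary~\ref{cor_factoringlogschemes}, factoring clutching through the Rees quotient subscheme supported on the boundary divisor, which makes the sub-logarithmic structure concrete and ties in with the stratification picture of Section~\ref{logstrat}. What it costs is extra input (the \'etale-local structure of $\calMbar_{g,n}$ along its boundary) and the verification that the locally defined $f^\flat$ glues and is functorial in families --- a burden you correctly flag and discharge with Lemma~\ref{pushoutlemma} and the minimality argument from Proposition~\ref{prop_Mbarlog=Mbar}, and which the paper's functorial definition sidesteps, since the canonical correspondence between monoid generators and nodes of the fibers does the gluing automatically. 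Your equivariance precaution for the self-gluing map is sensible but ultimately not needed: the type~(iii) model is symmetric in $\alpha$ and $\beta$, and the smoothing parameter $\delta$ is canonical, so the branch-swapping involution causes no difficulty.
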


\begin{proof}
The two formulas above define a functor between the fibered categories over $\mathbf{LSch}^\infty$ whose restriction to the category of small pointed logarithmic schemes (in the sense of \cite{Kato_logsmoothcurves}) are exactly the classical clutching and gluing morphisms for the algebraic moduli stacks $\calMbar_{g,n}$. This observation, together with Proposition \ref{prop_Mbarlog=Mbar}, implies the claim. 
\end{proof}

\begin{remark}
Let $S=\big(\Spec k, k^\ast \oplus P^\circ\big)$ be a standard logarithmic point, where $P^\circ$ is a sharp (unpointed) monoid. In \cite{FosterRanganathanTalpoUlirsch_logPic}*{Theorem 4.4} we have seen that the groupoid $\calM_{g,n}^{log}(S)$ is equivalent to the groupoid of metrized curve complexes (in the sense of \cite{AminiBaker_metrizedcurvecomplexes}) with edge lengths in $P^\circ$. If we take $S^\infty=\big(\Spec k, k^\ast\oplus P^\infty\big)$, this equivalence extends to an equivalence between $\calMbar_{g,n}^{log}(S^\infty)$ and the groupoid of metrized curve complexes with edge lengths in $P^\infty$. From this point of view, the clutching map is given by connecting the two marked points $\star$ and $\bullet$ by an edge of length $\infty$. 
\end{remark}

\section{Clutching and gluing in tropical geometry}\label{section_tropicalclutching&gluing}

In order to find tropical analogues for the \emph{clutching and gluing maps}, as originally introduced in \cite{Knudsen_projectivityII}, it has been realized in \cite[Section 8]{ACP} that, instead of working with morphisms of generalized cone complexes, one has to consider morphisms between their canonical compactifications. In this section we apply the same principle to the tropical moduli stacks introduced in \cite{CavalieriChanUlirschWise_tropstack} and construct tropical clutching and gluing maps that naturally commute with tropicalization. 

\begin{remark}
We recall from Section~\ref{foundations} that while the data of a strictly convex rational polyhedral cone $(\sigma,N)$ includes the choice a lattice $N$ giving rational structure, we will often abuse notation below and simply write $\sigma$ for the cone $(\sigma,N)$ unless otherwise necessary to avoid confusion.
\end{remark}

\subsection{Extended cone stacks}
A \emph{(rational polyhedral) cone complex} $\Sigma$ is a topological space $\vert \Sigma\vert$ that arises as a colimit of a diagram consisting of sharp cones and proper face morphisms such that the induced maps $\sigma\rightarrow\vert\Sigma\vert$ (called the faces of $\Sigma$) are injective. We refer to the reader to \cite{CavalieriChanUlirschWise_tropstack}*{Definition 2.1} for a more axiomatic definition of this notion. The category of cone complexes will be denoted by $\mathbf{RPCC}$; its morphisms are continuous maps $\Sigma\rightarrow\Sigma'$ that restrict to morphisms of cones on the faces of $\Sigma$. 

The category $\mathbf{RPCC}$ naturally carries a Grothendieck topology, the so-called \emph{face topology}, given by face embeddings $\sigma\hookrightarrow\Sigma$. A morphism $\Sigma\rightarrow\Sigma'$ in $\mathbf{RPCC}$ is said to be \emph{strict} if its restriction to a face of $\Sigma$ induces an isomorphism onto a face of $\Sigma'$. Denote by $\PP_{strict}$ the class of strict morphisms. The tuple $\big(\mathbf{RPCC},\tau_{face},\PP_{strict}\big)$ therefore defines a geometric context in the sense of \cite{CavalieriChanUlirschWise_tropstack}*{Section 1} and we may define cone spaces and cone stacks as geometric spaces and geometric stacks in this context respectively. Recall from \cite{CavalieriChanUlirschWise_tropstack}*{Section 1} that a geometric context is a tuple consisting of a category, a Grothendieck topology, and a class of morphisms $\PP$ fulfilling, in a sense, the minimal set of axioms necessary to allow for the construction of geometric stacks. We refer the reader to \cite{CavalieriChanUlirschWise_tropstack} for a thorough description of how this works in the case of cone stacks that we employ below.

Given a cone complex $\Sigma$, we may define its canonical extension $\Sigmabar$ by replacing every cone $\sigma$ in $\Sigma$ by its canonical extension $\sigmabar$ and gluing along the face incidences of $\Sigma$. Following \cite[Section 2.4]{ACP} we call complexes formed in this way \emph{extended rational polyhedral cone complexes} or just \emph{extended cone complexes} for short. We refer to the $\sigmabar\in\Sigmabar$ as the \emph{extended cones} of $\Sigmabar$. A \emph{morphism of extended cone complexes} (again, following \cite[Section 2.4]{ACP}) is a continuous map $f\colon \Sigmabar\rightarrow \Sigmabar'$ such that for each extended cone $\sigmabar$ in $\Sigmabar$ there is an extended cone $\sigmabar'$ of $\Sigmabar'$ such that $f\vert_{\sigmabar}$ induces a morphism of extended cones $\sigmabar\rightarrow \sigmabar'$. We denote by $\mathbf{RPCC}^\infty$ the category of extended cone complexes. 

For a cone $\sigma\in\Sigma$, we denote by $\Star(\sigma,\Sigma)$ the cone complex formed by taking the collection of quotients $\delta/\sigma$ for all $\delta\in\Sigma$ containing $\sigma$ as a face:
\[\Star(\sigma,\Sigma)=\{\delta/\sigma | \sigma \preceq \delta \in \Sigma\}.\] Two cones $\delta/\sigma$ and $\delta'/\sigma$ meet in a face $\tau/\sigma$ precisely when $\sigma\preceq\tau\preceq\delta$ and $\sigma\preceq\tau\preceq\delta'$. 

The stratifications \[ \sigmabar=\bigsqcup_{\tau\preceq \sigma}\sigma/\tau\] given in Proposition~\ref{prop_extendedcones} glue among all the extended cones in $\Sigmabar$, resulting in an induced stratification
\[\Sigmabar=\bigsqcup_{\sigma\in\Sigma}\Star(\sigma,\Sigma)\]
by maximal extended cone complexes at infinity, each canonically isomorphic to $\Star(\sigma,\Sigma)$. We denote by
\[\overline{\Star}(\sigma,\Sigma)=\bigsqcup_{\sigma\preceq\delta}\Star(\delta,\Sigma)\]
 the canonical extension of such a maximal complex at infinity. The inclusion maps $\overline{\delta/\sigma}\hooklongrightarrow\overline{\delta}$ for cones $\delta$ containing $\sigma$ as a face glue together, inducing an inclusion
\[i_{(\sigma,\Sigma)}:\overline{\Star}(\sigma,\Sigma)\hooklongrightarrow \Sigmabar.\]

Recall that any morphism $\sigma\longrightarrow\sigma'$ of cones induces a canonical extension $\sigmabar\longrightarrow\sigmabar'$ by continuity (we called such maps of extended cones ``toric" in Section~\ref{section_extendedcones}). Thus every morphism $f:\Sigma\rightarrow\Sigma'$ of cone complexes induces a canonical extension to a morphism $\Sigmabar\rightarrow \Sigmabar'$ of extended cone complexes, so that the natural functor
\begin{equation*}\begin{split}
[.]^\infty\mathrel{\mathop:}\mathbf{RPCC}&\longrightarrow\mathbf{RPCC}^\infty\\
\Sigma&\longmapsto \Sigmabar
\end{split}\end{equation*}
is faithful and essentially surjective. In a slight abuse of notation we denote the induced morphism by the same name $f:\Sigmabar\longrightarrow\Sigmabar'$ and call it a \emph{toroidal} morphism of extended cone complexes. 

As in the case of extended cones described in Section~\ref{section_affine}, we note the following important feature of extended cone complexes: while the new extended category $\mathbf{RPCC}^\infty$ does not introduce new objects (since the compactifications of complexes are canonical), it does introduce new morphisms. Indeed, a morphism $f:\Sigmabar\rightarrow \Sigmabar'$ of extended cone complexes may take its image in some maximal extended cone complex at infinity $\overline{\Star}(\sigma',\Sigma')\subset \Sigmabar'$, avoiding the underlying complex $\Sigma'$ entirely. Toroidal morphisms are precisely the morphisms induced from underlying morphisms of complexes, i.e., the morphisms whose image meets the underlying complex $\Sigma'$. We called such morphisms of extended cones \emph{toric} in Section~\ref{section_affine}. A morphism of extended cone complexes $f:\Sigmabar\longrightarrow\Sigmabar'$ must factor uniquely as a toroidal morphism composed with the inclusion $i_{(\gamma',\Sigma')}:\overline{\Star}(\gamma',\Sigma')\hooklongrightarrow \Sigmabar'$ of the maximal extended cone complex at infinity where it takes its image. Both $\gamma'$ and the factorization are determined locally, i.e. cone by cone, by the unique factorization of morphisms of extended cones. 

\begin{proposition}\label{prop_factorization} Given a morphism $f\colon\Sigmabar\rightarrow \Sigmabar'$ of extended cone complexes, there is a unique factorization
\begin{equation*}
\begin{CD}
f:\Sigmabar @>\widetilde{f}>> \overline{\Star}(\gamma',\Sigma') @>i_{(\gamma',\Sigma')}>>\Sigmabar'
\end{CD}
\end{equation*}
where $\gamma'$ is a cone in $\Sigma'$ and $\widetilde{f}$ is a toroidal morphism of extended cone complexes. 
\end{proposition}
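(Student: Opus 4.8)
The plan is to produce the factorization by assembling the local, cone-by-cone factorizations of $f$ (which exist by the analysis of morphisms of extended cones in Section~\ref{section_extendedcones}) into a single global factorization, and then to verify uniqueness. First I would recall that for each extended cone $\sigmabar$ of $\Sigmabar$, the restriction $f\vert_{\sigmabar}$ is a morphism of extended cones, hence factors uniquely as a toric (toroidal) morphism followed by the inclusion $i_{\tau'}$ of a maximal face at infinity $\econe{\sigma'/\tau'}$ of some extended cone $\sigmabar'$ of $\Sigmabar'$; the target face $\tau'\preceq\sigma'$ is determined by taking $i(\overline{\sigma'/\tau'})$ to be the \emph{smallest} maximal extended cone at infinity of $\Sigmabar'$ containing the image of $f\vert_{\sigmabar}$. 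The key point is that these locally determined data are compatible across face incidences: if $\tau\preceq\sigma$ is a face in $\Sigma$, then the factorization of $f\vert_{\taubar}$ is the restriction of the factorization of $f\vert_{\sigmabar}$, because restriction to a face commutes with taking the smallest face at infinity containing the image, using the compatibility of the stratifications $\sigmabar=\bigsqcup_{\tau\preceq\sigma}\sigma/\tau$ with face morphisms established in Proposition~\ref{prop_extendedcones}.

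Next I would identify the global target cone $\gamma'$. Since $\Sigmabar$ is connected through its face incidences and the locally determined target strata are compatible, the maximal extended cone complexes at infinity $\overline{\Star}(\tau',\Sigma')$ in which the pieces $f\vert_{\sigmabar}$ land must themselves be compatible under the inclusions $\Star(\delta',\Sigma')\hookrightarrow\Star(\gamma',\Sigma')$ for $\gamma'\preceq\delta'$. I would argue that there is a single cone $\gamma'$ of $\Sigma'$ — the smallest cone whose associated maximal extended cone complex at infinity $\overline{\Star}(\gamma',\Sigma')$ contains the entire image of $f$ — so that each $f\vert_{\sigmabar}$ factors through $\overline{\Star}(\gamma',\Sigma')$. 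Composing the local toroidal factorizations then yields a continuous map $\widetilde{f}\colon\Sigmabar\rightarrow\overline{\Star}(\gamma',\Sigma')$ whose restriction to each extended cone is toric, i.e.\ a toroidal morphism of extended cone complexes, such that $f=i_{(\gamma',\Sigma')}\circ\widetilde{f}$.

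For uniqueness, suppose $f=i_{(\gamma'',\Sigma')}\circ\widetilde{f}'$ is another such factorization. Because $\widetilde{f}'$ is toroidal, its image meets the underlying complex of $\overline{\Star}(\gamma'',\Sigma')$, i.e.\ the open stratum $\Star(\gamma'',\Sigma')$; hence $\overline{\Star}(\gamma'',\Sigma')$ is the smallest maximal extended cone complex at infinity containing the image of $f$, which forces $\gamma''=\gamma'$ by the minimality characterization used to define $\gamma'$. Once the target is fixed, the inclusion $i_{(\gamma',\Sigma')}$ is injective, so $\widetilde{f}'=\widetilde{f}$ follows from the uniqueness of the local toric factorizations of each $f\vert_{\sigmabar}$ in $\mathbf{RPC}^\infty$.

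I expect the main obstacle to be the compatibility and well-definedness at the global level — namely verifying that the locally selected target strata glue to a single cone $\gamma'$ and that the local toroidal pieces agree on overlaps to give a bona fide morphism in $\mathbf{RPCC}^\infty$. This amounts to checking that the ``smallest face at infinity containing the image'' is stable under face restriction, which I would reduce to the cone-level statement already available from Section~\ref{section_extendedcones} (in particular the lemma showing $\overline{f}(\sigma/\tau)\subseteq\sigma'/\omega$ for $\omega$ the smallest face containing $f(\tau)$) together with the gluing of stratifications in Proposition~\ref{prop_extendedcones}.
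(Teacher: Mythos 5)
Your overall strategy is the same as the paper's proof (restrict $f$ to each extended cone, invoke the unique toric-plus-inclusion factorization of morphisms of extended cones, glue, and derive uniqueness from toroidality), and your uniqueness paragraph is essentially correct. But there is a genuine error in your existence step: the defining property you give for $\gamma'$ --- ``the \emph{smallest} cone whose associated maximal extended cone complex at infinity $\overline{\Star}(\gamma',\Sigma')$ contains the entire image of $f$'' --- always returns the zero cone, because $\overline{\Star}(0,\Sigma')=\Sigmabar'$ contains the image of any morphism and $0$ is a face of every cone of $\Sigma'$. With $\gamma'=0$ your construction produces exactly the trivial factorization of Remark~\ref{ref_unique}, in which $\widetilde{f}=f$; this $\widetilde{f}$ is not toroidal unless $f$ already was, so the construction fails precisely in the cases the proposition is about. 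Note also that your uniqueness paragraph quietly switches to the opposite characterization (``the smallest maximal extended cone complex at infinity containing the image,'' which is the \emph{largest} such cone, since $\gamma'\preceq\delta'$ corresponds to $\overline{\Star}(\delta',\Sigma')\subseteq\overline{\Star}(\gamma',\Sigma')$), so the two halves of your argument do not even use the same $\gamma'$.

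The fix is to take the largest cone whose star closure contains the image --- equivalently, the unique $\gamma'$ such that the stratum $\Star(\gamma',\Sigma')$ of $\Sigmabar'$ contains $f(\Sigma)$. This choice also makes the gluing step, which you correctly identify as the main obstacle, transparent: by the paper's description of morphisms of extended cones, $f\vert_{\sigmabar}$ is given by a pair $(\tau',\,\sigma\rightarrow\sigma'/\tau')$, so the \emph{entire} finite cone $\sigma$ is sent into the single stratum $\sigma'/\tau'\subseteq\Star(\tau',\Sigma')$; since the strata of $\Sigmabar'$ are pairwise disjoint and every cone of $\Sigma$ contains the common origin, the cone $\tau'(\sigma)$ is literally the same for all $\sigma\in\Sigma$, namely the cone indexing the stratum containing $f(0)$. (This is also where connectedness of $\Sigma$ through the origin enters; for a disjoint union whose pieces land in different strata the factorization through a single star does not exist.) With $\gamma'$ so defined, each local factorization lands toroidally in $\overline{\Star}(\gamma',\Sigma')$, the pieces glue, and your uniqueness argument then goes through verbatim; this is, implicitly, what the paper's much terser proof does.
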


\begin{proof}
Restricting $f\colon\Sigmabar\rightarrow \Sigmabar'$ to any extended cone $\sigmabar\in\Sigmabar$ determines a morphism of extended cones $f|_{\sigmabar}:\sigmabar\longrightarrow \sigmabar'$ which uniquely factors as a toric morphism followed by an inclusion
\[
\begin{tikzcd}
\sigmabar\arrow[rr,"f|_{\sigmabar}"]\arrow[dr]   &   &   \sigmabar' \\
                    & \econe{\sigma'/\tau'} \arrow[ur,hook,"i_{\tau'}"']. & 
\end{tikzcd}
\]
These factorizations glue to form the desired factorization of $f$. Uniqueness is ensured by the requirement that $\tilde{f}$ be toroidal, expressed cone by cone by requiring that $\sigmabar\longrightarrow\econe{\sigma'/\tau'}$ above is toric.
\end{proof}

\begin{remark}\label{ref_unique}
Let $f:\Sigmabar\longrightarrow\Sigmabar'$ be a morphism of extended cones and $\gamma'\in\Sigma'$ the cone determined by Proposition~\ref{prop_factorization}.  Forgoing the hypothesis that $\tilde{f}$ is toroidal would allow for other factorizations 
\[
\begin{CD}
f:\Sigmabar @>\widetilde{f}>> \overline{\Star}(\delta',\Sigma') @>i_{(\delta',\Sigma')}>>\Sigmabar',
\end{CD}
\]
one for each face $\delta'\preceq\gamma'$. In particular there is always a trivial factorization
\[
\begin{CD}
f:\Sigmabar @>\widetilde{f}>> \overline{\Star}(0,\Sigma') @>i_{(0,\Sigma')}>>\Sigmabar'
\end{CD}
\]
where the second morphism is the identity.
\end{remark}

Proposition \ref{prop_factorization} allows us to apply the same argument as in Proposition \ref{prop_fiberproductspointedlog} to show that the category of extended cone complexes admits fiber products. The face maps $\sigmabar\hookrightarrow \Sigmabar$ for extended cones $\sigmabar$ of $\Sigmabar$ induce a Grothendieck topology $\tau_{face}$ on $\mathbf{RPCC}^{\infty}$. We say that morphism $f\colon\Sigmabar\rightarrow\Sigmabar'$ is \emph{strict}, if the restriction of $f$ to every extended cone $\sigmabar$ in $\Sigmabar$ induces an isomorphism onto an extended cone in $\Sigmabar'$. 

Denote by $\PP_{strict}$ the class of strict morphisms.  The tuple $(\mathbf{RPCC}^\infty, \tau_{strict}, \PP_{strict})$ defines a geometric context in the sense of \cite[Definition 1.1]{CavalieriChanUlirschWise_tropstack}. We may now define an \emph{extended cone stack} as a geometric stack in the context $\big(\mathbf{RPCC}^\infty, \tau_{face},\PP_{strict}\big)$.

\begin{definition}
An \emph{extended cone stack} is a stack $\calCbar$ over $(\mathbf{RPCC}^\infty, \tau_{face})$ that fulfills the following axioms:
\begin{enumerate}[(i)]
\item The diagonal morphism $\Delta\colon \calCbar\rightarrow\calCbar\times\calCbar$ is representable by a extended cone complexes. 
\item There is an extended cone complex $\Sigmabar$ as well as a (necessarily representable) morphism $\Sigmabar\rightarrow \calCbar$ that is strict and surjective. 
\end{enumerate}
\end{definition}

Given a strict and surjective groupoid object $(R\rightrightarrows U)$ in $\mathbf{RPCC}$, the induced groupoid $(R^\infty\rightrightarrows U^\infty)$ in $\mathbf{RPCC}^\infty$ is again strict and surjective. Thus, there is a natural functor 
\begin{equation*}
[.]^{\infty}\colon\mathbf{ConeStacks}\longrightarrow \mathbf{ConeStacks}^\infty
\end{equation*}
such that, whenever $[U/R]\simeq \calC$ is a groupoid presentation of a cone stack $\calC$, we have a natural equivalence $[U^\infty/R^\infty]$. 

\begin{remark}
In \cite[Section 2]{CavalieriChanUlirschWise_tropstack} we first define the notion of a cone space, an analogues of an algebraic space, and then require the diagonal of a cone stack to only be representable by cone spaces (and not by cone complexes). We refrain from including this extra complication here, but mention this technical fine print for the interested reader.  
\end{remark}

\subsection{The extended moduli stack $\calMbar_{g,n}^{trop}$} In \cite{CavalieriChanUlirschWise_tropstack}*{Section 3} moduli stacks $\calM_{g,n}^{trop}$ of tropical curves are defined as cone stacks whose fiber over a sharp cone $\sigma$ is the groupoid of stable tropical curves of genus $g$ with $n$ marked legs and edge lengths in the dual monoid $S_\sigma$. We are now going to extend this construction to tropical curves with edge lengths in pointed monoids. 

\begin{definition}
Let $\sigmabar$ be an extended cone. An \emph{extended tropical curve} $\Gamma$ over $\sigmabar$ is a finite  graph $G$ (possibly with legs) together with a non-negative vertex weight $h\colon V(G)\rightarrow \Z_{\geq 0}$ and an edge length function $d\colon E(G)\rightarrow S_\sigma^\infty-\{0\}$.
\end{definition}

Recall that the \emph{genus} of $\Gamma$ (and also of $G$) is defined to be the number 
\begin{equation*}
g(\Gamma)=g(G)=b_1(G)+\sum_{v\in V(G)}h(v)
\end{equation*} 
and that $\Gamma$ is said to be \emph{stable} if for all vertices $v\in V(G)$ the inequality $2h(v)-2+\vert v\vert>0$ holds. 

The argument in \cite{CavalieriChanUlirschWise_tropstack}*{Proposition 2.3} immediately shows that there is a unique stack $\calMbar_{g,n}^{trop}$ over the site $\big(\mathbf{RPCC}^\infty, \tau_{face}\big)$ (up to equivalence)  whose fiber over an extended cone $\sigmabar$ is the groupoid of stable \emph{extended tropical curves} $\Gamma$ over $\sigmabar$ of genus $g$ with $n$ marked legs. 

\begin{proposition}\label{prop_Mgn=Mbargn}
The moduli stack $\calMbar_{g,n}^{trop}$ is an extended cone stack.   
\end{proposition}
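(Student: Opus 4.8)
The plan is to realize $\calMbar_{g,n}^{trop}$ as the pointification of the cone stack $\calM_{g,n}^{trop}$ and to deduce the statement from the fact that the functor $[.]^\infty$ takes values in $\mathbf{ConeStacks}^\infty$.

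Recall from \cite{CavalieriChanUlirschWise_tropstack}*{Section 3} that $\calM_{g,n}^{trop}$ is a cone stack; I would fix a strict surjective groupoid presentation $\calM_{g,n}^{trop}\simeq [U/R]$ in which $U$ is assembled from the cones $\sigma_G=\R_{\geq 0}^{E(G)}$ ranging over stable weighted graphs $G$ of genus $g$ with $n$ legs, and the groupoid $R\rightrightarrows U$ encodes graph automorphisms and edge contractions. Applying the pointification functor $[.]^\infty\colon\mathbf{ConeStacks}\rightarrow\mathbf{ConeStacks}^\infty$ from Section~\ref{section_tropicalclutching&gluing} yields the extended cone stack $(\calM_{g,n}^{trop})^\infty\simeq [U^\infty/R^\infty]$: the atlas $U^\infty$ assembled from the $\econe{\sigma_G}$ is strict and surjective, and the diagonal is representable because $R^\infty=U^\infty\times_{(\calM_{g,n}^{trop})^\infty}U^\infty$ is again an extended cone complex. (The requisite fiber products exist by Proposition~\ref{prop_factorization}, exactly as in Proposition~\ref{prop_fiberproductspointedlog}.) It therefore suffices to produce a natural equivalence $\calMbar_{g,n}^{trop}\simeq (\calM_{g,n}^{trop})^\infty$.

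I would construct this equivalence by showing that the same groupoid $R^\infty\rightrightarrows U^\infty$ presents $\calMbar_{g,n}^{trop}$. The universal families on the components $\econe{\sigma_G}$ assemble into a morphism $U^\infty\rightarrow\calMbar_{g,n}^{trop}$: over an extended cone $\sigmabar$, the contravariant equivalence $\overline S$ between pointed toric monoids and extended cones turns a map $\sigmabar\rightarrow\econe{\sigma_G}$ into a morphism of pointed monoids $(\N^{E(G)})^\infty\rightarrow S_\sigma^\infty$, that is, into an edge-length assignment $d\colon E(G)\rightarrow S_\sigma^\infty$, and hence into a stable extended tropical curve of combinatorial type $G$ over $\sigmabar$. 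This morphism is surjective because every stable extended tropical curve has some combinatorial type, and it is strict because its restriction to each $\econe{\sigma_G}$ is an isomorphism onto the extended cone of type-$G$ curves. The fibered product $U^\infty\times_{\calMbar_{g,n}^{trop}}U^\infty$ then parametrizes pairs of combinatorial types together with an isomorphism of their associated extended tropical curves, which is precisely $R^\infty$. Consequently $\calMbar_{g,n}^{trop}\simeq [U^\infty/R^\infty]\simeq (\calM_{g,n}^{trop})^\infty$ is an extended cone stack.

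The crux will be the identification $U^\infty\times_{\calMbar_{g,n}^{trop}}U^\infty\simeq R^\infty$, i.e.\ the claim that the isomorphisms among extended tropical curves are generated by graph automorphisms, contractions of $0$-length edges, and the face-at-infinity structure recording edges of length $\infty$, with no spurious identifications. Here the decisive inputs are the unique factorization of morphisms of extended cone complexes (Proposition~\ref{prop_factorization}) together with the dictionary between faces at infinity and Rees quotients from Proposition~\ref{prop_extendedcones} and the equivalence $\overline S$: these guarantee that sending an edge length to $\infty$ corresponds exactly to landing in the stratum $\Star(\sigma,\Sigma)$ at infinity cut out by the Rees quotient $\pmon{(\N^{E(G)})}\rightarrow\pmon{(\N^{E(G)})}/\frakp_\tau$, so that the pointified groupoid accounts for all morphisms of extended tropical curves and for nothing more.
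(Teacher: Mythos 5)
Your proposal is correct in substance but organizes the proof in a genuinely different way from the paper. The paper verifies the two defining axioms of an extended cone stack for $\calMbar_{g,n}^{trop}$ directly, by transporting the argument of \cite{CavalieriChanUlirschWise_tropstack} to the pointed setting: it rigidifies with the functors $\Ubar_G$ of curves equipped with a weighted edge contraction $\phi\colon G\rightarrow\GG(\Gamma)$, so that each $\Ubar_G$ is representable by the extended cone $\sigmabar_G$ (the pointed analogue of \cite[Lemma 3.4]{CavalieriChanUlirschWise_tropstack}), shows that $\Ubar_{g,n}=\bigsqcup_G\Ubar_G\rightarrow\calMbar_{g,n}^{trop}$ is representable by extended cone complexes, strict, and surjective (the analogue of Lemma 3.5), and then invokes an analogue of \cite[Lemma 2.11]{CavalieriChanUlirschWise_tropstack} to conclude representability of the diagonal from the mere existence of such an atlas; the identification of $\calMbar_{g,n}^{trop}$ with the canonical extension of $\calM_{g,n}^{trop}$ is only recorded \emph{after} the proof, as a byproduct. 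You invert this logic: you take the equivalence $\calMbar_{g,n}^{trop}\simeq(\calM_{g,n}^{trop})^\infty$ as the goal, use the paper's functor $[.]^\infty$ on cone stacks to know a priori that $[U^\infty/R^\infty]$ is an extended cone stack, and reduce to showing that the pointified groupoid presents the moduli stack. What your route buys is that the canonical-extension statement becomes the centerpiece rather than an afterthought; what it costs is that you must explicitly identify $U^\infty\times_{\calMbar_{g,n}^{trop}}U^\infty$ with $R^\infty$ --- exactly the computation that the paper's Lemma 2.11-analogue allows it to bypass --- and, as you rightly flag, this identification requires the same automorphism-and-contraction bookkeeping as the unpointed computation in \cite{CavalieriChanUlirschWise_tropstack}, extended to the faces at infinity via the unique factorization of Proposition~\ref{prop_factorization}.

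One caution: your stated justification of strictness is not literally correct. The restriction $\sigmabar_G\rightarrow\calMbar_{g,n}^{trop}$ is not ``an isomorphism onto the extended cone of type-$G$ curves,'' because graph automorphisms (e.g.\ the flip of a loop) act trivially on $\sigmabar_G$ yet nontrivially on every curve it parametrizes, so the map is not even a monomorphism. Strictness must be verified after base change along maps from extended cone complexes, or obtained as the paper does by first rigidifying with the contraction marking $\phi$ so that $\Ubar_G$ becomes an honest sheaf represented by $\sigmabar_G$. For the same reason, your crux identification of the fiber product with $R^\infty$ is best carried out at this rigidified level, where it becomes precisely the pointed analogue of \cite[Lemma 3.5]{CavalieriChanUlirschWise_tropstack}.
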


\begin{proof}
Fix a stable vertex-weighted finite graph $G=(V,E,L,h,m)$ of genus $g$ with $n$ marked points. The moduli functor $\Ubar_G$ whose fiber over an extended rational polyhedral cone $\sigmabar$ is the groupoid of pairs $(\Gamma, \phi)$ consisting of a tropical curves $\Gamma$ in $\calMbar_{g,n}^{trop}(\sigmabar)$ as well as a weighted edge contraction $\phi\colon G\rightarrow \GG(\Gamma)$, where $\GG(\Gamma)$ denotes the underlying weighted finite $n$-marked graph of $\Gamma$.

Lemma 3.4 in \cite{CavalieriChanUlirschWise_tropstack} immediately generalizes to this situation and so $\Ubar_G$ is representable by the extended rational polyhedral cone $\sigmabar_G=\Rbar_{\geq 0}^E$. The main point here is that, for every extended cone $\sigmabar$, there is a one-to-one correspondence between the two sets $\Ubar_G(\sigmabar)$ and 
\begin{equation*}
\Hom(\sigmabar, \sigmabar_G)=\Hom\big(\N^{E}, S_{\sigma}^\infty\big) \ .
\end{equation*}
Given a homomorphism $f\colon \N^{E}\rightarrow S_\sigma^\infty$, we endow $G$ with a generalized edge length given by 
\begin{equation*}
d_f(e)=f\big([e]\big)
\end{equation*}
for $e\in E$ and the associated tropical curve in $\Ubar_G(\sigmabar)$ is given by contracting all the edges $e$ for which $d_f(e)=0$. 

As in \cite[Lemma 3.5]{CavalieriChanUlirschWise_tropstack}, the natural morphism 
\begin{equation*}\begin{split}
\Ubar_G&\longrightarrow \calMbar_{g,n}^{trop}\\
(\Gamma/\sigmabar, \phi)& \longmapsto \Gamma/\sigmabar
\end{split}\end{equation*}
representable by extended cone complexes, strict, and quasi-compact. 

This implies that $\Ubar_{g,n}=\bigsqcup_{G}\Ubar_G$ is a cone complex and that the natural morphism 
\begin{equation*}\begin{split}
\Ubar_{g,n}&\longrightarrow \calMbar_{g,n}
\end{split}\end{equation*}
is representable, strict, and quasi-compact. Since it is clearly surjective, we have constructed a representable strict and surjective atlas of the stack $\calMbar_{g,n}^{trop}$. Finally, an analogue of \cite[Lemma 2.11]{CavalieriChanUlirschWise_tropstack} holds in the context $(\mathbf{RPCC}^\infty, \tau_{face}, \PP_{strict})$ and therefore the existence of a strict and surjective atlas $\Ubar_{g,n}\rightarrow \calMbar_{g,n}^{trop}$ that is representable by extended cone complexes already implies that the diagonal of $\calMbar_{g,n}^{trop}$ is representable by extended cone complexes. \end{proof}

The proof of Proposition \ref{prop_Mgn=Mbargn} in particular shows that $\calMbar_{g,n}^{trop}$ is naturally equivalent to the canonical extension of $\calM_{g,n}^{trop}$. 

\subsection{Clutching and gluing}

In \cite{CavalieriChanUlirschWise_tropstack}*{Section 6} the authors have introduced a new incarnation of the tropicalization map for the moduli space of curves as a smooth and surjective logarithmic morphism
\begin{equation*}
\trop_{g,n}\colon\calM_{g,n}^{log}\longrightarrow \calM_{g,n}^{trop} \ .
\end{equation*}
For this to make sense one has to use the theory of \emph{Artin fans} (see \cite{AbramovichWise_invariance, AbramovichChenMarcusWise_boundedness, Abramovichetal_logsurvey, Ulirsch_nonArchArtin}) in order to lift the moduli stack $\calM_{g,n}^{trop}$ to a stack over the category of logarithmic schemes. Indeed in \cite{CavalieriChanUlirschWise_tropstack}*{Section~6}, an equivalence of two categories is demonstrated between the category of cone stacks and the category of Artin fans, i.e. logarithmic algebraic stacks admitting a strict \'etale cover by a disjoint union of quotients $[A/T]$ of affine toric varieties $A$ by their dense torus. In a slight abuse of notation, we denoted by $\calM_{g,n}^{trop}$ both the cone stack described above and the associated Artin fan which is representable by algebraic stack with a logarithmic structure. 

\begin{definition}
Let $S$ be a pointed logarithmic scheme. A \emph{family of extended tropical curves} over $S$ is a collection $\Gamma_q$ of tropical curves, indexed by the geometric points $q$ of $S$, with edge lengths in $\Mbar_{S,q}$ such that, whenever $t$ is a geometric point of $S$ that specializes to $q$, then the tropical curve $\Gamma_t$ is obtained from $\Gamma_q$ by endowing the underlying graph $G_q$ of $\Gamma_q$ with the edge length 
\begin{equation*}
d\colon E(G_q)\longrightarrow \Mbar_{S,q}\longrightarrow \Mbar_{S,t}
\end{equation*}
and contracting all edges for which this edge length is zero.
\end{definition}

Denote by $\calMbar_{g,n}^{trop}$ the fibered category over $\mathbf{LSch}^\infty$ whose fiber over pointed logarithmic scheme $S$ is the groupoid of families of stable tropical curves over $S$ of genus $g$ with $n$ marked legs. 

\begin{proposition}\label{prop_Mgnbarlog=Mgnbar}
The fibered category $\calMbar_{g,n}^{trop}$ is representable by an algebraic stack with a pointed logarithmic structure that is logarithmically \'etale over $k$. 
\end{proposition}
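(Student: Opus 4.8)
The plan is to mirror the strategy of \cite{CavalieriChanUlirschWise_tropstack}*{Section 6}, where the unpointed stack $\calM_{g,n}^{trop}$ is lifted to an algebraic stack with logarithmic structure (an Artin fan) via the equivalence between cone stacks and Artin fans, and to carry that equivalence through the pointification functors of Sections~\ref{section_pointedmonoids} and \ref{section:pointification}. Since Proposition~\ref{prop_Mgn=Mbargn} already exhibits $\calMbar_{g,n}^{trop}$ as an extended cone stack, equipped with the strict, surjective atlas $\Ubar_{g,n}=\bigsqcup_G\Ubar_G\rightarrow\calMbar_{g,n}^{trop}$ by the extended cones $\sigmabar_G=\Rbar_{\geq0}^E$, the task reduces to producing a compatible atlas of the fibered category $\calMbar_{g,n}^{trop}$ over $\mathbf{LSch}^\infty$ by pointed affine toric Artin fans, and then identifying the resulting pointed Artin fan with the fibered category of families of extended tropical curves.

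First I would construct the functor from extended cone complexes to pointed logarithmic algebraic stacks. To an extended cone $\sigmabar=\Hom_{\mathbf{Mon}^\infty}(\pmon{S_\sigma},\R^\infty_{\geq0})$, which under the equivalence $\mathbf{Mon}_{toric}^\infty\simeq(\mathbf{RPC}^\infty)^{op}$ of Section~\ref{section_pointedmonoid=extendedcone} corresponds to the pointed toric monoid $\pmon{S_\sigma}$, I would associate the quotient stack $\big[\Spec k[S_\sigma]/T_\sigma\big]$ endowed with the pointification of its toric logarithmic structure; this is exactly the pointification $\calA_\sigma^\infty$ of the affine toric Artin fan of \cite{CavalieriChanUlirschWise_tropstack}, and it is logarithmically \'etale over $k$ because $\calA_\sigma$ is. Gluing along face morphisms extends this to extended cone complexes, and descent along a strict surjective groupoid presentation $(R^\infty\rightrightarrows U^\infty)$ extends it to extended cone stacks. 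The point making this work is that pushouts and the strict topology are preserved under pointification, by Lemma~\ref{pushoutlemma} and Proposition~\ref{prop_factorization}, so the construction commutes with $[.]^\infty$ on both sides. Applying it to the extended cone stack of Proposition~\ref{prop_Mgn=Mbargn} yields a pointed Artin fan $\calA$ whose underlying unpointed Artin fan is the one representing $\calM_{g,n}^{trop}$ in \cite{CavalieriChanUlirschWise_tropstack}.

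Next I would verify that the functor of points of $\calA$ agrees with the fibered category of families of extended tropical curves. By construction a morphism $S\rightarrow\calA$ factors strict-\'etale-locally through the chart $\calA_{\sigma_G}^\infty$, and such a factorization is, again by the equivalence $\mathbf{Mon}_{toric}^\infty\simeq(\mathbf{RPC}^\infty)^{op}$, precisely a morphism of pointed monoids $\pmon{(\N^E)}\rightarrow\Mbar_S$, i.e. an assignment of an edge length $d(e)\in\Mbar_S$ to each $e\in E(G)$ that is now permitted to take the absorbing value $\infty$. This is exactly the datum of a family of extended tropical curves over $S$ in the sense of the definition preceding the proposition, with the required specialization behaviour recorded by the generization maps $\Mbar_{S,q}\rightarrow\Mbar_{S,t}$; the groupoid relations match those of $\calMbar_{g,n}^{trop}$ through the same weighted-edge-contraction bookkeeping used in \cite{CavalieriChanUlirschWise_tropstack}. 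Logarithmic \'etaleness over $k$ is then inherited from the atlas, since each chart $\calA_{\sigma_G}^\infty$ is logarithmically \'etale over $k$ and the property is strict-\'etale local.

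The hard part will be the second and third steps together: checking that the pointified cone-stack-to-Artin-fan construction is genuinely functorial with respect to \emph{sub}-logarithmic (rather than merely logarithmic) morphisms, and that on the level of functors of points the new toroidal-versus-general distinction of $\mathbf{RPCC}^\infty$ corresponds exactly to allowing edge lengths to reach $\infty$ in $\Mbar_S$. Concretely, one must ensure that the unique factorization of a morphism of extended cone complexes in Proposition~\ref{prop_factorization} is carried by the functor to the unique factorization of a sub-logarithmic morphism through a Rees quotient subscheme in Corollary~\ref{cor_factoringlogschemes}, so that the combinatorial and the logarithmic descriptions of $\calMbar_{g,n}^{trop}$ are identified stratum by stratum.
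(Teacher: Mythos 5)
Your proposal is correct and takes essentially the same route as the paper: both realize $\calMbar_{g,n}^{trop}$ as the pointification (canonical extension) of the Artin fan representing $\calM_{g,n}^{trop}$, identify its functor of points over $\mathbf{LSch}^\infty$ with families of extended tropical curves via Proposition~\ref{prop_Mgn=Mbargn}, and inherit logarithmic \'etaleness over $k$ from \cite{CavalieriChanUlirschWise_tropstack}*{Theorem 3}. The paper compresses all of this into a two-sentence deduction from Proposition~\ref{prop_Mgn=Mbargn} and that theorem, whereas you spell out the chart-by-chart construction of the pointed Artin fan and the functor-of-points check that the paper treats as immediate; the mathematical content is the same.
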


\begin{proof}
This is an immediate consequence of Proposition \ref{prop_Mgn=Mbargn}, since it shows that $\calMbar_{g,n}^{trop}$ is nothing but the canonical extension of $\calM_{g,n}^{trop}$ for every geometric point of a pointed logarithmic scheme. Then, since $\calM_{g,n}^{trop}$ is an algebraic stack with a logarithmic structure that is logarithmically \'etale over $k$ by \cite{CavalieriChanUlirschWise_tropstack}*{Theorem 3}, the claim follows. 
\end{proof}

\begin{definition}
Let $S$ be a pointed logarithmic scheme whose underlying scheme is a point. Given a logarithmic curve $X$ over $S$, the \emph{dual tropical curve} $\Gamma_X$ is the extended tropical curve consisting of:
\begin{enumerate}[(i)]
\item one vertex $v$ for each irreducible component $X_v$ of $X$, with vertex weighted $h(v)$ the genus of the normalization of $X$; 
\item a leg $l_i$ incident to the vertex $v$ for each marked point $x_i$ on $X_v$; and
\item for each node $x_e$ of $X$ connecting two component $X_v$ and $X_{v'}$ with logarithmic equation $\alpha+\beta=\delta_e$ (see Definition \ref{def_logcurve} above) an edge $e$ connecting the two vertices $v, v'$ of length $d(e)=\delta_e\in \Mbar_S$.
\end{enumerate}
\end{definition}

In particular, we have a natural strict, smooth, and surjective  \emph{tropicalization morphism} 
\begin{equation*}
\trop_{g,n}\colon\calMbar_{g,n}^{log}\longrightarrow \calMbar_{g,n}^{trop}
\end{equation*}
that is given by associating to a family of logarithmic curves over a pointed logarithmic scheme $S$ the family of dual tropical curves over $S$.

\begin{theorem}\label{thm_tropclutch&glue}
There are natural clutching maps 
\begin{equation*}
\calMbar_{g,n+1}^{trop}\times \calMbar_{g',n'+1}^{trop}\longrightarrow \calMbar_{g+g',n+n'}^{trop}
\end{equation*}
and gluing maps
\begin{equation*}
\calMbar_{g-1, n+2}^{trop}\longrightarrow\calMbar_{g,n}^{trop}
\end{equation*}
(as morphisms of extended cone stacks) that make the induced diagrams (in the category of pointed logarithmic stacks)
\begin{equation*}
\begin{CD}
\calMbar_{g,n+1}^{log}\times \calMbar_{g',n'+1}^{log}@>\trop>>\calMbar_{g,n+1}^{trop}\times \calMbar_{g',n'+1}^{trop}\\
@VVV @VVV\\
\calMbar_{g+g',n+n'}^{log} @>\trop>> \calMbar_{g+g',n+n'}^{trop}
\end{CD}
\qquad\qquad
\begin{CD}
\calMbar_{g-1, n+2}^{log}@>\trop>>\calMbar_{g-1, n+2}^{trop}\\
@VVV @VVV\\
\calMbar_{g,n}^{log} @>\trop>>\calMbar_{g,n}^{trop}
\end{CD}
\end{equation*}
commute. 
\end{theorem}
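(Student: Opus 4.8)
The plan is to build the tropical clutching and gluing maps directly as natural operations on extended tropical curves, check that these operations are functorial in the base extended cone (hence define morphisms of stacks over $(\mathbf{RPCC}^\infty,\tau_{face})$), record their local form on the atlas of Proposition~\ref{prop_Mgn=Mbargn}, and finally deduce commutativity with tropicalization by unwinding definitions together with Theorem~\ref{thm_logclutch&glue}.

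First I would define the clutching operation on objects. Given an extended cone $\sigmabar$ and a pair $(\Gamma_1,\Gamma_2)$ of stable extended tropical curves over $\sigmabar$ of genera $g,g'$ with $n+1$ and $n'+1$ marked legs, let $v_1,v_2$ be the vertices carrying the final legs $\star,\bullet$. I set $\Gamma=\Gamma_1\star\Gamma_2$ to be the extended tropical curve over $\sigmabar$ whose underlying graph is $G_1\sqcup G_2$ with $\star$ and $\bullet$ deleted and a new edge $e$ joining $v_1$ to $v_2$, with edge length $d(e)=\infty\in S_\sigma^\infty$ and all remaining vertex weights and edge lengths inherited. The genus formula $g(\Gamma)=b_1(G)+\sum_v h(v)$ gives $g(\Gamma)=g+g'$, and a direct check on the remaining vertices confirms stability, so $\Gamma$ defines an object of $\calMbar_{g+g',n+n'}^{trop}(\sigmabar)$. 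The gluing operation is the same construction applied to a single curve over $\sigmabar$, deleting its final two legs and joining the supporting vertices (possibly by a self-loop) with an edge of length $\infty$, which raises $b_1$ and hence the genus by one.

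Next I would promote these to morphisms of extended cone stacks. Since the new edge $e$ always carries length $\infty$, it is never among the length-zero edges that pullback contracts; consequently the two constructions commute with restriction along any morphism $\sigmabar'\to\sigmabar$ of extended cones and are manifestly compatible with isomorphisms of tropical curves. They therefore define morphisms of the fibered categories, i.e.\ morphisms of extended cone stacks. To see their concrete shape, I would restrict to the atlas $\Ubar_{g,n+1}\times\Ubar_{g',n'+1}$: on the chart indexed by a pair $(G_1,G_2)$, with clutched graph $G$ having edge set $E=E_1\sqcup E_2\sqcup\{e\}$, the map is the morphism of extended cones $\sigmabar_{G_1}\times\sigmabar_{G_2}=\Rbar_{\geq0}^{E_1}\times\Rbar_{\geq0}^{E_2}\longrightarrow\Rbar_{\geq0}^{E}=\sigmabar_G$ determined, in the notation of Section~\ref{section_extendedcones}, by the face $\tau_e\preceq\sigma_G$ spanned by $e$ together with the natural isomorphism $\sigma_{G_1}\times\sigma_{G_2}\xrightarrow{\sim}\sigma_G/\tau_e$; that is, it is the inclusion of the source onto the maximal face at infinity $\econe{\sigma_G/\tau_e}$ of $\sigmabar_G$. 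By Proposition~\ref{prop_factorization} these local maps are legitimate morphisms in $\mathbf{RPCC}^\infty$ which are \emph{not} toroidal---their image avoids the underlying cone stack $\calM_{g+g',n+n'}^{trop}$---so the tropical clutching and gluing maps are genuinely new morphisms of the extended theory.

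I expect the main point requiring care to be the global coherence of this construction across the face structure: verifying that contracting a length-zero edge of $G_1$ and then clutching produces the same extended tropical curve as clutching and then contracting the corresponding edge of $G$, and that the infinite edge $e$ is handled uniformly so that every chart-level map lands in the correct face at infinity. Both hold because the contracted edges are always disjoint from $e$, but organizing this compatibility---so that the local inclusions into faces at infinity assemble via Proposition~\ref{prop_factorization} into a single morphism of extended cone stacks---is where the bookkeeping concentrates. Once the maps are in hand, commutativity of the two squares is essentially formal: tropicalization sends a family of logarithmic curves to its dual tropical curve, with one edge of length $\delta_e\in\Mbar_S$ for each node $\alpha+\beta=\delta_e$, while by Theorem~\ref{thm_logclutch&glue} the logarithmic clutching (resp.\ gluing) map introduces exactly one new node, with $\delta=\infty_S$, and leaves the others unchanged. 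Thus the dual tropical curve of the clutched logarithmic curve is obtained from the duals of its factors by adjoining a single edge of length $\infty$---precisely the tropical clutching (resp.\ gluing) map---so the two composites around each square agree on objects and morphisms and the diagrams commute.
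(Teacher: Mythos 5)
Your proposal is correct and follows essentially the same route as the paper: the clutching and gluing maps are defined cone by cone by adjoining the new edge $\{\star\sim\bullet\}$ with length $\infty\in S_\sigma^\infty$, and commutativity with tropicalization is deduced from the explicit description of the pointed logarithmic structure ($\delta=\infty_S$ at the new node) in Theorem~\ref{thm_logclutch&glue}. The paper's proof is terser, leaving implicit the functoriality, stability, and atlas-level checks that you spell out; these additions are consistent with, not divergent from, its argument.
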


\begin{proof}
Define the clutching map 
\begin{equation*}
\calMbar_{g,n+1}^{trop}\times\calMbar_{g',n'}^{trop}\longrightarrow \calMbar_{g+g',n+n'}^{trop}
\end{equation*}
as the unique map whose restriction to an extended rational polyhedral cone $\sigmabar$ is given by the association 
\begin{equation*}
\big([\Gamma,l_1,\ldots,l_n,\star],[\Gamma',l_1',\ldots, l_{n'}',\bullet]\big)\longmapsto \big[\Gamma\sqcup_{\star\sim\bullet}\Gamma'\big] \ .
\end{equation*}
The tropical curve $\Gamma\sqcup_{\star\sim\bullet}\Gamma'$ is defined by taking the amalgamated sum of the underlying graphs of $\Gamma$ and $\Gamma'$ over the legs $\star$ and $\bullet$ and endowing the resulting graph with the generalized edge length
\begin{equation*}
d(e)=\left\{\begin{array}{cl}
d_\Gamma(e) &\quad\textrm{ if } e\in E(\Gamma)  \\
d_{\Gamma'}(e) &\quad\textrm{ if } e\in E(\Gamma')  \\
 \infty &\quad\textrm{ if } e=\{\star\sim\bullet\}\\ \end{array}\right .
\end{equation*}
with values in $S_\sigma^\infty$. From the explicit description of the pointed logarithmic structures in Theorem \ref{thm_logclutch&glue} we obtain that this map commutes with tropicalization.

A completely analogous construction also gives a (self)-gluing map
\begin{equation*}\begin{split}
\calMbar_{g-1,n+2}^{trop}&\longrightarrow\calMbar_{g,n}^{trop}\\
\big[\Gamma,l_1,\ldots, l_n,\star,\bullet\big]&\longmapsto \big[\Gamma/_{\star\sim\bullet}\big]
\end{split}\end{equation*}
that naturally commutes with tropicalization, again by Theorem \ref{thm_logclutch&glue} above.
\end{proof}

\bibliographystyle{alpha}
\bibliography{biblio}{}

\end{document}